\newtheorem{theorem}{Theorem}[section]
\newtheorem{lemma}[theorem]{Lemma}
\newtheorem{proposition}[theorem]{Proposition}
\newtheorem{remark}[theorem]{Remark}
\theoremstyle{definition}
\newtheorem{definition}[theorem]{Definition}
\numberwithin{equation}{section}
\newcommand{\remove}[1]{}
\newcommand{\ra} {\rightarrow}
\newcommand{\RR} {\mathbb{R}}
\newcommand{\DD} {\displaystyle}
\newcommand{\la} {\lambda}
\theoremstyle{remark}
\begin{document}
\title {Nehari manifold  for
	fractional p(.)-Laplacian system involving  concave-convex nonlinearities}
	 \date{} 
 	\maketitle
 	\vspace{-1cm}
	\begin{center}
 {\large Reshmi Biswas and Sweta Tiwari\\
 Department of Mathematics, IIT Guwahati, Assam 781039, India}
\end{center}
\begin{abstract}
In this article using Nehari manifold method we study the multiplicity of solutions of the following nonlocal elliptic system involving variable exponents 
and  concave-convex nonlinearities:
\begin{equation*}
\;\;\;	\begin{array}{rl}
(-\Delta)_{p(\cdot)}^{s} u&=\lambda~ a(x)| u|^{q(x)-2}u+\frac{\alpha(x)}{\alpha(x)+\beta(x)}c(x)| u|^{\alpha(x)-2}u| v| ^{\beta(x)},\hspace{2mm}
x\in \Omega; \\
(-\Delta)_{p(\cdot)}^{s} v&=\mu~ b(x)| v|^{q(x)-2}v+\frac{\alpha(x)}{\alpha(x)+\beta(x)}c(x)| v|^{\alpha(x)-2}v| u| ^{\beta(x)},\hspace{2.5mm}
x\in \Omega; \\
u=v&=0 ,\hspace{1cm} x\in \Omega^c:=\mathbb R^N\setminus\Omega,
\end{array}
\end{equation*}
where $\Omega\subset\mathbb R^N,~N\geq2$ is a smooth bounded domain, $\lambda,\mu>0$ are the parameters,  $s\in(0,1),$ 
$p\in C(\RR^N\times \mathbb R^N,(1,\infty))$ and  $q,\alpha,\beta\in C(\overline{\Omega},(1,\infty))$ are the variable exponents and 
$a,b,c\in C(\overline{\Omega},[0,\infty))$ are the non-negative weight functions. 
We show that there exists $\Lambda>0$ such that for all $\lambda+\mu<\Lambda$, there exist two non-trivial and non-negative solutions of the 
above problem under some assumptions on $q,\alpha,\beta$.
\end{abstract}
{\bf Subject classification [2010]:} {35J48, 35J50,  35R11.}\\
{\bf Keywords:} {Nonlocal problem with variable exponents; Elliptic system; Nehari manifold; Fibering map; Concave-convex nonlinearities.}
\section{Introduction}
In this article we consider the following nonlocal elliptic system with variable exponents:
\begin{equation}\label{mainprob}
\;\;\;	\left.\begin{array}{rl}
(-\Delta)_{p(\cdot)}^{s} u&=\la~ a(x)| u|^{q(x)-2}u+\frac{\alpha(x)}{\alpha(x)+\beta(x)}c(x)| u|^{\alpha(x)-2}u| v| ^{\beta(x)},\hspace{2mm}
x\in \Omega, \\
(-\Delta)_{p(\cdot)}^{s} v&=\mu~ b(x)| v|^{q(x)-2}v+\frac{\alpha(x)}{\alpha(x)+\beta(x)}c(x)| v|^{\alpha(x)-2}v| u| ^{\beta(x)},\hspace{2.5mm}
x\in \Omega, \\
u=v&=0 ,\hspace{7.3cm} x\in \Omega^c:=\mathbb R^N\setminus\Omega,
\end{array}
\right\}
\end{equation}
where $\Omega\subset\RR^N,~N\geq2$ is a smooth bounded domain, $\la,\mu>0$ are the parameters, 
$s\in(0,1),$   $p\in C(\RR^N\times \RR^N,(1,\infty))$ with $sp^+<N$ and  $q,\alpha,\beta\in C(\overline{\Omega},(1,\infty))$ 
are the variable exponents and $a,b,c\in C(\overline{\Omega},[0,\infty))$ are the non-negative weight functions.
The nonlocal operator  $(-\Delta)_{p(\cdot)}^{s}$ is defined  as
\begin{equation}\label{operator}
(-\Delta)_{p(\cdot)}^{s} u(x):=  P.V.
\int_{\RR^N}\frac{\mid
	u(x)-u(y)\mid^{p(x,y)-2}(u(x)-u(y))}{\mid
	x-y\mid^{N+s(x,y)p(x,y)}}dy, ~~x \in \RR^N,
\end{equation}
\noindent where P.V. stands for Cauchy principle value.
Problems involving nonlocal operators have gained a lot of interests for research in recent years.
Mathematical modeling of problems in many areas like mechanics, population dynamics, thin obstacle problem,
optimization and finance involves fractional Laplacian $(-\Delta)^s$ or fractional $p$-Laplacian $(-\Delta)^s_p$.
We refer \cite{Bisci} and  \cite{HG} for the basic results on problems involving nonlocal operators.
Also one can refer \cite{parini,rad1, MoSq, SV2} and the references therein for the existence, multiplicity and regularity of the solutions
of these problems. \\
In present work our objective is to study the nonlocal elliptic problems with variable exponents. Operators involving variable growth are 
extensively studied due to the precision in modeling of various phenomenon where the property of the subject under consideration depends 
on the point of the observation,
for example in image restoration, electrorheological fluid and in non-Newtonian processes etc.
We refer \cite{ alves2,Die,Fan2,Fan1,KS} and references therein for the study of the problems involving $p(x)-$Laplace operator
defined as $\Delta_{p(x)}u:=\text{ div }(|\nabla u|^{p(x)-2}\nabla u)$.\\ Therefore there is a natural question  to see what results can be recovered
when the local $p(\cdot)$-Laplacian  is replaced by the fractional $p(\cdot)$-Laplacian.
The fractional Sobolev spaces with variable exponents and the corresponding fractional $p(\cdot)$-Laplace operator $(-\Delta)_{p(\cdot)}^{s}$   are introduced recently by
Kaufmann et al. in \cite{KRV}. Also in \cite{ ABS,AB,AR, ky ho} authors have established the basic properties of such spaces and studied the problems involving fractional $p(\cdot)$-Laplacian.\\
Using the Nehari manifold and the Fibering map, in case of local $p-$Laplacian, 
Brown and Wu (\cite{Wu}) have obtained multiple solutions of an 
elliptic system with sign changing weight functions and concave-convex nonlinearities. In nonlocal set-up, Sreenadh and Goyal (\cite{SG}) studied the same for the single fractional
$p-$ Laplacian equation .
Also we refer (\cite{chen-deng} ) where the authors studied the fractional p-Laplacian system involving concave-convex nonlinearities  via Nehari manifold and Fibering map.
In \cite{pucci} Pucci et al. modified the definition of Nehari manifold and Fibering map for the fractional $(p,q)-$Laplacian system and studied the corresponding Dirichlet problem.
Recently Alves et al. (\cite{alves})  used this Nehari manifold method to prove the multiplicity of solutions for $p(x)$-Laplacian 
problems in whole of $\mathbb R^N$.\\ 
Motivated by the above works, in this article we address the multiplicity of the solutions for the nonlocal elliptic system with variable exponents involving concave and 
convex nonlinearities using the analysis of the Fibering map and Nehari manifold. We note that the Nehari manifold approach through the Fibering map analysis for the functional
involving variable exponents
is interesting due to the non-homogeneity that arises from the variable exponents. But it is also worthy mentioning that due to the presence of the variable exponents most 
of the estimates do not hold immediately unlike in the constant exponent set-up. More precisely for  the non-homogeneity in the non-linear term and in the corresponding energy 
functional we loose some good properties which are valid in case of constant exponents. Hence in our present work we need to carry out some extra careful analysis to overcome this
issue.
According to our best of knowledge this the first work dealing with fractional $p(\cdot)-$Laplacian system involving concave and convex nonlinearities using Fibering-map approach.
\par  Next we set some notations as follows. Let  $\mathcal{D}$ be a domain.
For any function $\Phi:\mathcal{D}\rightarrow\mathbb R$, we set 
{\begin{align}\label{notation1}
	\Phi^{-}:=\inf_{\mathcal{D}} \Phi(x)\text{ ~~~and ~~} \Phi^{+}:=\sup_{ \mathcal{D}}\Phi(x).
	\end{align}}
\noindent We also define the function space
$$C_+(\mathcal{D}):=\{g\in C(\mathcal{D}, \RR):1 <g^{-}\leq g^{+}<\infty\}.$$
In order to state our result we assume that the variable exponents $p,q,\alpha~ and ~\beta$ and  the weight functions $a,b,c$ satisfy the
following hypotheses.  
\begin{itemize}
	\item[{\bf(P1).}] The variable exponent $p\in C_+(\RR^N\times\RR^N).$ 
	\item[{\bf(P2).}] The function $p$ is symmetric, i.e., $p(x,y)=p(y,x)$ for all $(x,y)\in \RR^N\times\RR^N$.	
	\item[\bf(A1).] The variable exponents $q,\alpha,\beta\in C_{+}(\overline{\Omega})$ and $p\in C_+(\RR^N\times\RR^N)$ satisfy the following:$$ 1<q^-\leq q^+<p^-\leq p^+<\alpha^-+\beta^-\leq\alpha^++\beta^+<p_s^{*-},$$ where $p^*_s(x)=\frac{Np(x,x)}{N-sp(x,x)},~ x\in \overline{\Omega}$ is the critical exponent.
	\item[\bf{(A2).}] We also assume $\DD\frac{p^-}{\alpha^++\beta^+}<\frac{p^--q^+}{\alpha^++\beta^+-q^+}. \frac{\alpha^-+\beta^--q^-}{p^+-q^-}.$ 
	\item[\bf{(A3).}] The non-negative weight functions $a,b\in L^{q_*(x)}(\Omega),$ where $$q_*(x)=\DD\frac{\alpha(x)+\beta(x)}{\alpha(x)+\beta(x)-q(x)}.$$ 
	\item[\bf{(A4).}] The non-negative weight function $c\in L^\infty(\Omega).$
\end{itemize}
\begin{remark}
	$(A2)$  is equivalent to the condition $0<p<\alpha+\beta$ when all the exponents are constants.
\end{remark}
Now we define the weak solution of problem \eqref{mainprob} in the functional space $E$, defined in  Section 2, as follows:
\begin{definition}\label{defi}
	We say that $(u,v)\in E$ is a weak solution of the problem \eqref{mainprob}, if  we have 
	{	\begin{align}\label{weakform}
		&\int_{\RR^N\times\RR^N}\frac{|
			u(x)-u(y)|^{p(x,y)-2}(u(x)-u(y))(\phi(x)-\phi(y))}{|
			x-y|^{N+sp(x,y)}}dxdy\nonumber\\& + \int_{\RR^N\times\RR^N}\frac{|
			v(x)-v(y)|^{p(x,y)-2}(v(x)-v(y))(\psi(x)-\psi(y))}{|
			x-y|^{N+sp(x,y)}}dxdy\nonumber\\&=\int_{\Omega}\Big(\la a(x)| u|^{q(x)-2}u\phi+
		\mu b(x)| v|^{q(x)-2}v\psi\Big) dx\nonumber\\ &+\int_{\Omega}\frac{\alpha(x)}{\alpha(x)+\beta(x)}c(x)| u|^{\alpha(x)-2}u| v| ^{\beta(x)}\phi dx\nonumber\\&+\int_{\Omega}\frac{\beta(x)}{\alpha(x)+\beta(x)}c(x)| v|^{\alpha(x)-2}v| u| ^{\beta(x)}\psi dx~~~~\text {for all } (\phi,\psi)\in E.
		\end{align} } 
\end{definition}
The main result in this article is stated as follows:
\begin{theorem}\label{mainthm}
	Let $\Omega\subset\RR^N, N\geq2$ be a smooth bounded domain, $s\in(0,1)$ and $p(\cdot,\cdot)$ satisfy $(P1)-(P2)$ with $sp^+<N.$
	Assume the hypotheses $(A1)-(A4)$ hold true. Then there exists a positive constant
	$\Lambda=\Lambda(N,s,p,q,\alpha,\beta,a,b,c,\Omega)$ such that for any pair of positive parameters $(\la,\mu)$ with $\la+\mu<\Lambda,$
	the problem \ref{mainprob} has at least two non-trivial, non-negative weak solutions.
\end{theorem}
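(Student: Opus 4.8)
The plan is to follow the classical Nehari manifold/fibering map strategy of Brown--Wu and Sreenadh--Goyal, but with careful attention to the non-homogeneity coming from the variable exponents. Let $J_{\la,\mu}:E\to\RR$ be the energy functional associated with \eqref{mainprob}, i.e.
\[
J_{\la,\mu}(u,v)=\rho_{s,p}(u,v)-\int_\Omega\big(\tfrac{\la}{q(x)}a(x)|u|^{q(x)}+\tfrac{\mu}{q(x)}b(x)|v|^{q(x)}\big)\,dx-\int_\Omega\tfrac{1}{\alpha(x)+\beta(x)}c(x)|u|^{\alpha(x)}|v|^{\beta(x)}\,dx,
\]
where $\rho_{s,p}(u,v)$ denotes the sum of the two Gagliardo-type modular terms divided pointwise by $p(x,y)$. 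First I would verify that $J_{\la,\mu}\in C^1(E,\RR)$ and that critical points are exactly weak solutions in the sense of Definition~\ref{defi}; this uses the embedding results for the variable-exponent fractional Sobolev space $E$ from Section~2 together with hypotheses $(A1)$ (subcriticality), $(A3)$--$(A4)$ (integrability of weights via a H\"older inequality with exponent $q_*$). Since $J_{\la,\mu}$ is unbounded below on $E$, I introduce the Nehari manifold $\mathcal N_{\la,\mu}=\{(u,v)\in E\setminus\{0\}:\langle J'_{\la,\mu}(u,v),(u,v)\rangle=0\}$ and split it as $\mathcal N^+\cup\mathcal N^0\cup\mathcal N^-$ according to the sign of the second derivative of the fibering map $\phi_{(u,v)}(t)=J_{\la,\mu}(tu,tv)$ at $t=1$.

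The heart of the argument is the fibering map analysis. Because of the variable exponents, $\phi_{(u,v)}(t)$ is not a sum of three pure powers of $t$; instead each term is an integral of $t^{r(x)}$-type expressions, so the usual trick of reducing $\phi'(t)=0$ to a quadratic-type equation fails. Here I would mimic the device in Alves et al.\ \cite{alves}: estimate $\phi'_{(u,v)}(t)$ from above and below by expressions built from $t^{q^\pm-1}$, $t^{p^\pm-1}$ and $t^{\alpha^\pm+\beta^\pm-1}$ and the modulars, and use the ordering $q^+<p^-\le p^+<\alpha^-+\beta^-$ from $(A1)$ to show that for $\la+\mu$ small the map $t\mapsto\phi_{(u,v)}(t)$ has exactly the "double-well'' shape: a local maximum at some $t_1>0$ and a local minimum at some $t_2>t_1$, giving two points on the Nehari manifold (one in $\mathcal N^+$, one in $\mathcal N^-$). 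The quantitative smallness threshold $\Lambda$ is extracted precisely here, by requiring that the relevant "discriminant-type'' inequality hold; condition $(A2)$, which in the constant-exponent case degenerates to $0<p<\alpha+\beta$, is exactly what is needed to make the competing exponents in these estimates line up so that such a $\Lambda>0$ exists. A parallel computation shows $\mathcal N^0=\{0\}$ for $\la+\mu<\Lambda$, so $\mathcal N^\pm$ are $C^1$ submanifolds and every critical point of $J_{\la,\mu}|_{\mathcal N_{\la,\mu}}$ is a free critical point (the standard Lagrange-multiplier argument, using $\mathcal N^0=\emptyset$ off the origin).

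Next I would show $J_{\la,\mu}$ is coercive and bounded below on $\mathcal N_{\la,\mu}$ — again using $(A1)$, $(A3)$, the H\"older and Sobolev inequalities, and the modular-norm comparison estimates for $E$ — so that $c^+:=\inf_{\mathcal N^+}J_{\la,\mu}$ and $c^-:=\inf_{\mathcal N^-}J_{\la,\mu}$ are finite; a fibering-map computation gives $c^+<0$. Then I take minimizing sequences in $\mathcal N^+$ and in $\mathcal N^-$, use coercivity to get weak limits $(u_0,v_0)$ and $(u_1,v_1)$ in $E$, and upgrade to strong convergence: the compact embedding $E\hookrightarrow\hookrightarrow L^{q(\cdot)}\times L^{q(\cdot)}$ (and into the space controlling the $c(x)|u|^{\alpha(x)}|v|^{\beta(x)}$ term) handles the lower-order terms, and for the leading modular term one argues that if convergence were not strong the limit would land in the wrong component or violate $\mathcal N^0=\{0\}$, a contradiction (this is the usual "$\mathcal N^-$ is closed away from $\mathcal N^0$'' argument, plus for $\mathcal N^+$ the fact that $c^+<0$ rules out the weak limit being $0$). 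Hence the infima are attained, giving two critical points of $J_{\la,\mu}$; they are distinct because $\mathcal N^+\cap\mathcal N^-=\emptyset$ and both are nontrivial. Finally, replacing $(u,v)$ by $(|u|,|v|)$ leaves $J_{\la,\mu}$ unchanged (the Gagliardo seminorm does not increase under taking absolute values, and the weights are non-negative), so the solutions may be taken non-negative. I expect the main obstacle to be the fibering-map shape analysis and the concurrent extraction of $\Lambda$: without homogeneity one cannot solve $\phi'=0$ explicitly, and the two-sided modular estimates must be combined with exactly the right use of $(A1)$ and $(A2)$ to guarantee the existence of the two critical points $t_1<t_2$ uniformly in the relevant range of $(u,v)$ and $(\la,\mu)$.
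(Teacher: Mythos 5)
Your proposal follows essentially the same Nehari-manifold / fibering-map strategy the paper uses, with the same lemma structure: modular estimates on $P,Q,R$; emptiness of $\mathscr N^0_{\lambda,\mu}$ for $\lambda+\mu$ small; coercivity and boundedness from below on $\mathscr N_{\lambda,\mu}$; a fibering-map case analysis producing points of $\mathscr N^\pm_{\lambda,\mu}$; minimizing sequences upgraded to strong convergence via a fibering-map comparison; and a Lagrange-multiplier step passing from constrained to free critical points. So the route is the paper's. Two concrete slips are worth flagging.

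First, you describe the shape of the fibering map backwards. Since $q^+<p^-\le p^+<\alpha^-+\beta^-$, for small $t>0$ the concave $t^{q(x)}$-term dominates (when $Q(u,v)>0$), so $\varphi_{u,v}(t)<0$ and $\varphi'_{u,v}(t)<0$ near $0$; for large $t$ the $t^{\alpha(x)+\beta(x)}$-term dominates and $\varphi_{u,v}(t)\to-\infty$. Hence $\varphi_{u,v}$ first attains a local \emph{minimum} at some $t^+$ (so $(t^+u,t^+v)\in\mathscr N^+_{\lambda,\mu}$) and then a local \emph{maximum} at some $t^->t^+$ (so $(t^-u,t^-v)\in\mathscr N^-_{\lambda,\mu}$); this is the content of the paper's Lemma \ref{lem3}. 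You assert a local maximum at $t_1$ followed by a local minimum at $t_2>t_1$, which contradicts the sign of $\varphi'_{u,v}$ near $0$ and would misdirect the comparisons of $t_0^\pm$ with $1$ used in the compactness arguments of Propositions \ref{lem9}--\ref{lem10}.

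Second, the non-negativity step is not rigorous as written. Replacing $(u_0,v_0)$ by $(|u_0|,|v_0|)$ does \emph{not} leave $J_{\lambda,\mu}$ unchanged: the Gagliardo modular enters $J_{\lambda,\mu}$ with a plus sign and can only decrease under taking absolute values, so $J_{\lambda,\mu}(|u_0|,|v_0|)\le J_{\lambda,\mu}(u_0,v_0)$; and, more importantly, $(|u_0|,|v_0|)$ need not lie on $\mathscr N_{\lambda,\mu}$ at all, so it is not automatically a minimizer. The paper fixes this by noting $Q(|u_0|,|v_0|)=Q(u_0,v_0)>0$, using Lemma \ref{lem3}$(ii)$ to find $t_1>0$ with $(t_1|u_0|,t_1|v_0|)\in\mathscr N^+_{\lambda,\mu}$, deducing $t_1\ge 1$ from $0=\varphi'_{|u_0|,|v_0|}(t_1)\le\varphi'_{u_0,v_0}(t_1)$ together with the shape of $\varphi_{u_0,v_0}$ on $(0,t^+)$, and then chaining $J_{\lambda,\mu}(t_1|u_0|,t_1|v_0|)\le J_{\lambda,\mu}(|u_0|,|v_0|)\le J_{\lambda,\mu}(u_0,v_0)$. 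You need this rescaling to land back on the Nehari manifold; the bare absolute-value replacement does not suffice.
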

\section{ Preliminary results}
Here we recall the variable exponent Lebesgue spaces. For more details regarding these space one can refer {\cite{Die, Fan1} and references therein.}\\
For $\gamma \in C_{+}(\overline{\Omega}), $
we define the following variable exponent Lebesgue space:
$$L^{\gamma(x)}(\Omega)=\Big\{u:\Omega\ra\RR~ \text {is~measurable}:\int_{\Omega}|u|^{\gamma(x)}<+\infty\Big\},$$
This space  is a separable, reflexive Banach space equipped with the following Luxemburg norm:\\
$$\parallel u \parallel_{L^{\gamma(x)}(\Omega)} = \inf\Big\{\eta>0:\int_{\Omega}\Big|\frac{ u}{\eta}\Big|^{\gamma(x)}\leq 1\Big\}.$$
The space $(L^{\gamma(x)}(\Omega),\parallel \cdot \parallel_{L^{\gamma(x)}(\Omega)})$ is a separable, reflexive Banach space.\\
We also recall the following H\"older-type inequality.
\begin{lemma}\label{holder}
	Let $\gamma'\in C_+(\overline{\Omega})$ such that  $ \DD\frac{1}{\gamma(x)} + \frac {1}{\gamma'(x)}=1$.
	Then for any  $u \in L^{\gamma(x)}(\Omega)$ and $ v\in L^{\gamma'(x)}(\Omega)$ we have
	$$\left| \int_{\Omega} uv dx \right|\leq \Big(\frac{1}{\gamma^{-}} + \frac{1}{\gamma^{'-}} \Big) \parallel u \parallel_{L^{\gamma(x)}(\Omega)}
	\parallel v \parallel_{L^{\gamma'(x)}(\Omega)}. $$
\end{lemma}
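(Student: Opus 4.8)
\textbf{Proof strategy for Lemma \ref{holder} (variable-exponent Hölder inequality).}

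The plan is to reduce the statement to the well-known pointwise Young's inequality $st \le \frac{s^\gamma}{\gamma} + \frac{t^{\gamma'}}{\gamma'}$, valid for $s,t\ge 0$ and conjugate exponents $\gamma,\gamma'>1$, applied with the variable exponent $\gamma(x)$ at each point. First I would dispose of the trivial cases: if $\|u\|_{L^{\gamma(x)}(\Omega)}=0$ or $\|v\|_{L^{\gamma'(x)}(\Omega)}=0$, then $u=0$ or $v=0$ a.e.\ and both sides vanish, so we may assume both norms are positive and finite (finiteness is part of the hypothesis $u\in L^{\gamma(x)}(\Omega)$, $v\in L^{\gamma'(x)}(\Omega)$). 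Write $A:=\|u\|_{L^{\gamma(x)}(\Omega)}$ and $B:=\|v\|_{L^{\gamma'(x)}(\Omega)}$.

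The key step is normalization. By the definition of the Luxemburg norm and the fact that the modular $\rho_{\gamma}(w):=\int_\Omega |w|^{\gamma(x)}\,dx$ is, for $\gamma\in C_+(\overline\Omega)$, continuous and satisfies $\rho_\gamma(w/\|w\|)\le 1$ (the infimum defining the norm is attained as a consequence of $\gamma^+<\infty$ and the dominated/monotone convergence theorems), we have
\begin{equation*}
\int_\Omega \left|\frac{u(x)}{A}\right|^{\gamma(x)}dx \le 1,\qquad \int_\Omega \left|\frac{v(x)}{B}\right|^{\gamma'(x)}dx \le 1.
\end{equation*}
Now apply Young's inequality pointwise with $s=|u(x)|/A$, $t=|v(x)|/B$, and the conjugate pair $\gamma(x),\gamma'(x)$:
\begin{equation*}
\frac{|u(x)\,v(x)|}{A\,B} \le \frac{1}{\gamma(x)}\left|\frac{u(x)}{A}\right|^{\gamma(x)} + \frac{1}{\gamma'(x)}\left|\frac{v(x)}{B}\right|^{\gamma'(x)}.
\end{equation*}
Integrating over $\Omega$ and bounding $\frac{1}{\gamma(x)}\le\frac{1}{\gamma^-}$ and $\frac{1}{\gamma'(x)}\le\frac{1}{\gamma'^-}$ gives
\begin{equation*}
\frac{1}{A\,B}\int_\Omega |u\,v|\,dx \le \frac{1}{\gamma^-}\int_\Omega \left|\frac{u}{A}\right|^{\gamma(x)}dx + \frac{1}{\gamma'^-}\int_\Omega \left|\frac{v}{B}\right|^{\gamma'(x)}dx \le \frac{1}{\gamma^-} + \frac{1}{\gamma'^-}.
\end{equation*}
Multiplying through by $AB$ and using $\left|\int_\Omega uv\,dx\right|\le \int_\Omega|uv|\,dx$ yields the claimed inequality.

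The only genuinely delicate point is the justification that $\rho_\gamma(u/A)\le 1$, i.e.\ that the infimum in the Luxemburg norm is actually achieved (or at least can be approached while keeping the modular $\le 1$); this is where the hypothesis $\gamma\in C_+(\overline\Omega)$, in particular $\gamma^+<\infty$, is used, via continuity of $\eta\mapsto \rho_\gamma(u/\eta)$ on $(0,\infty)$ and the fact that it tends to $0$ as $\eta\to\infty$. Everything else is the pointwise Young inequality plus monotone estimates on $1/\gamma(x)$, so I expect no further obstacles; the constant $\frac{1}{\gamma^-}+\frac{1}{\gamma'^-}$ is exactly what falls out of this scaling argument (and is $\ge 1$, consistent with the classical case where it equals $1$).
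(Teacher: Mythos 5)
Your proof is correct and is exactly the standard argument for this recalled lemma (the paper itself gives no proof, citing the variable-exponent literature, where the proof is precisely this pointwise Young's inequality applied to the normalized functions $u/\|u\|_{L^{\gamma(x)}}$ and $v/\|v\|_{L^{\gamma'(x)}}$, followed by the bound $1/\gamma(x)\le 1/\gamma^-$). Your handling of the only delicate point, namely $\rho_\gamma(u/\|u\|)\le 1$, is also sound (it can even be obtained by monotone convergence without invoking $\gamma^+<\infty$).
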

\noindent Next we recall { Lemma A.1 in \cite{JSG}} for variable exponent Lebesgue spaces.
\begin{lemma}\label{lemA1}
	Let $\nu_1(x)\in L^\infty(\Omega)$ such that $\nu_1\geq0,\; \nu_1\not\equiv 0.$ Let $\nu_2:\Omega\ra\RR$ be a measurable function 
	such that $\nu_1(x)\nu_2(x)\geq 1$ a.e. in $\Omega.$ Then for every $u\in L^{\nu_1(x)\nu_2(x)}(\Omega),$ 
	$$\parallel |u|^{\nu_1(\cdot)}\parallel_{L^{\nu_2(x)}(\Omega)}\leq 
	\parallel u\parallel_{L^{\nu_1(x)\nu_2(x)}(\Omega)}^{\nu_1^-}+\parallel u\parallel_{L^{\nu_1(x)\nu_2(x)}(\Omega)}^{\nu_1^+}.$$
\end{lemma}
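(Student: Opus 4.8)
\textbf{Proof plan for Lemma \ref{lemA1}.}
The plan is to reduce the claimed inequality to the standard relation between the Luxemburg norm and the modular $\rho_{\nu_2}(w):=\int_\Omega |w|^{\nu_2(x)}\,dx$. Recall the elementary fact (valid in any variable-exponent Lebesgue space with exponent $\gamma\in C_+(\overline\Omega)$): if $\|w\|_{L^{\gamma(x)}(\Omega)}>1$ then $\|w\|_{L^{\gamma(x)}(\Omega)}\le \rho_{\gamma}(w)^{1/\gamma^-}$, while if $\|w\|_{L^{\gamma(x)}(\Omega)}\le 1$ then $\|w\|_{L^{\gamma(x)}(\Omega)}\le \rho_{\gamma}(w)^{1/\gamma^+}\le \rho_\gamma(w)^{1/\gamma^-}$ when $\rho_\gamma(w)\ge 1$; in all cases one has the uniform bound $\|w\|_{L^{\gamma(x)}(\Omega)}\le \rho_\gamma(w)^{1/\gamma^-}+\rho_\gamma(w)^{1/\gamma^+}$. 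I would first record this as the key preliminary estimate, with the short proof via the defining property of the Luxemburg norm (that $\rho_\gamma(w/\|w\|)\le 1$ and the map $\eta\mapsto \rho_\gamma(w/\eta)$ is continuous and decreasing).

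Next, apply this with $w=|u|^{\nu_1(\cdot)}$ and $\gamma(x)=\nu_2(x)$. The role of the hypotheses $\nu_1\in L^\infty(\Omega)$, $\nu_1\ge 0$, $\nu_1\not\equiv 0$, and $\nu_1(x)\nu_2(x)\ge 1$ a.e. is precisely to guarantee that $\nu_2\in C_+$-type bounds are not needed: $\nu_2$ may only be measurable, but since $\nu_1$ is bounded we have $\nu_2(x)\ge 1/\nu_1^+>0$ a.e., so $\nu_2^-\ge 1/\nu_1^+$ is a genuine positive lower bound and the exponents $1/\nu_2^-$, $1/\nu_2^+$ appearing in the preliminary estimate are finite. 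The crucial computation is then the modular identity
\[
\rho_{\nu_2}\big(|u|^{\nu_1(\cdot)}\big)=\int_\Omega \big(|u(x)|^{\nu_1(x)}\big)^{\nu_2(x)}\,dx=\int_\Omega |u(x)|^{\nu_1(x)\nu_2(x)}\,dx=\rho_{\nu_1\nu_2}(u),
\]
which holds pointwise because $\nu_1(x)\nu_2(x)\ge 1$ ensures the product exponent is admissible and $u\in L^{\nu_1(x)\nu_2(x)}(\Omega)$ makes the integral finite.

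Finally I would relate $\rho_{\nu_1\nu_2}(u)$ back to $\|u\|_{L^{\nu_1(x)\nu_2(x)}(\Omega)}$. Using the standard modular--norm inequalities for the exponent $\nu_1\nu_2$ (again in the two regimes $\|u\|\gtrless 1$), one gets $\rho_{\nu_1\nu_2}(u)\le \|u\|_{L^{\nu_1\nu_2}}^{(\nu_1\nu_2)^-}+\|u\|_{L^{\nu_1\nu_2}}^{(\nu_1\nu_2)^+}$. Substituting into the preliminary estimate and taking $1/\nu_2^-$-th and $1/\nu_2^+$-th powers, one bounds each resulting term by a sum of powers of $\|u\|_{L^{\nu_1\nu_2}}$ whose exponents lie between $(\nu_1\nu_2)^-/\nu_2^+$ and $(\nu_1\nu_2)^+/\nu_2^-$; the point is that these exponents are squeezed between $\nu_1^-$ and $\nu_1^+$ (since $(\nu_1\nu_2)^\pm$ differs from $\nu_1^\pm\nu_2^\pm$ only in a way controlled by monotonicity of $t\mapsto t^{\sigma}$), and then one uses the elementary fact that for $t\ge 0$ and $\nu_1^-\le \sigma\le \nu_1^+$ one has $t^\sigma\le t^{\nu_1^-}+t^{\nu_1^+}$ to collapse everything into $\|u\|_{L^{\nu_1\nu_2}}^{\nu_1^-}+\|u\|_{L^{\nu_1\nu_2}}^{\nu_1^+}$. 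I expect the only delicate point to be this last bookkeeping of exponents — making sure that after composing the modular estimates for $\nu_2$ and for $\nu_1\nu_2$ the surviving powers of $\|u\|_{L^{\nu_1\nu_2}}$ really do stay in the interval $[\nu_1^-,\nu_1^+]$ — while the rest is a routine application of the modular/Luxemburg dictionary.
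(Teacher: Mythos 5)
The paper does not actually prove this lemma --- it is quoted verbatim as Lemma A.1 of \cite{JSG} --- so your argument must stand on its own, and its final step contains a genuine gap. The first two stages are fine: the preliminary norm--modular estimate and the identity $\rho_{\nu_2}\big(|u|^{\nu_1(\cdot)}\big)=\rho_{\nu_1\nu_2}(u)$ are correct (modulo the caveat that $\nu_2^+$ need not be finite under the stated hypotheses, since $\nu_1$ is only assumed nonnegative and bounded above, so $\nu_2\geq 1/\nu_1$ may be unbounded, and the norm--modular dictionary you invoke for $L^{\nu_2(x)}(\Omega)$ requires $\nu_2^+<\infty$). The fatal problem is the claimed squeeze of exponents. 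After composing the two modular estimates, the worst surviving exponent is $(\nu_1\nu_2)^+/\nu_2^-$, and this is in general strictly larger than $\nu_1^+$: on $\Omega=(0,1)$ take $\nu_1=2$, $\nu_2=3$ on $(0,\tfrac12)$ and $\nu_1=\nu_2=1$ on $(\tfrac12,1)$; then $\nu_1^+=2$, $\nu_2^-=1$, $(\nu_1\nu_2)^+=6$, so for $\|u\|_{L^{\nu_1\nu_2}}>1$ your chain only yields a bound of order $\|u\|_{L^{\nu_1\nu_2}}^{6}$, whereas the lemma asserts $\|u\|_{L^{\nu_1\nu_2}}+\|u\|_{L^{\nu_1\nu_2}}^{2}$. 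The elementary inequality $t^{\sigma}\leq t^{\nu_1^-}+t^{\nu_1^+}$ you want to invoke at the end is simply unavailable for $\sigma=6>\nu_1^+=2$ and $t>1$. The root cause is that passing through the global quantities $\nu_2^{\pm}$ and $(\nu_1\nu_2)^{\pm}$ destroys the pointwise correlation between $\nu_1(x)$ and $\nu_2(x)$, which is precisely what the lemma exploits.

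The repair is to argue pointwise, directly from the definition of the Luxemburg norm, and skip the modular dictionary altogether. Set $t=\|u\|_{L^{\nu_1(x)\nu_2(x)}(\Omega)}$ (assume $t>0$) and $S=t^{\nu_1^-}+t^{\nu_1^+}$. For a.e.\ $x$ one has $t^{\nu_1(x)}\leq S$ (use $t^{\nu_1(x)}\leq t^{\nu_1^-}$ if $t\leq 1$ and $t^{\nu_1(x)}\leq t^{\nu_1^+}$ if $t\geq 1$), hence $S^{\nu_2(x)}\geq t^{\nu_1(x)\nu_2(x)}$, and therefore $\int_{\Omega}\big(|u(x)|^{\nu_1(x)}/S\big)^{\nu_2(x)}dx\leq\int_{\Omega}\big(|u(x)|/t\big)^{\nu_1(x)\nu_2(x)}dx\leq 1$, the last step by the defining property of the Luxemburg norm. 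By that same definition this gives $\| |u|^{\nu_1(\cdot)}\|_{L^{\nu_2(x)}(\Omega)}\leq S$, which is the assertion; this route needs neither $\nu_2^+<\infty$ nor any global comparison of exponents.
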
	
\noindent  The modular  $\rho_{\gamma}:L^{\gamma(x)}(\Omega)\ra \RR$ is defined as
$$\rho_{\gamma}(u) = \int_\Omega |u|^{\gamma(x)}dx.$$
We  also state the following result from \cite{Fan1} where the authors established the relations between norm
$\parallel \cdot \parallel_{L^{\gamma(x)}(\Omega)}$ and 
the corresponding modular function $\rho_{\gamma}(\cdot)$ as follows:
\begin{lemma}\label{prp1}
	Let $u \in L^{\gamma(x)}(\Omega)$, then 
	\begin{enumerate}
		\item[(i)]$ \parallel u \parallel_{L^{\gamma(x)}(\Omega)}<1(=1;>1) \text{~ if and only if ~} \rho_{\gamma}(u)<1(=1;>1);$
		\item[(ii)] If $\parallel u \parallel_{L^{\gamma(x)}(\Omega)}>1$, then $\parallel u \parallel_{L^{\gamma(x)}(\Omega)} ^{\gamma^{-}}\leq\rho_{\gamma}(u)\leq\parallel u \parallel_{L^{\gamma(x)}(\Omega)}^{\gamma^{+}}$ ;
		\item[(iii)] If $\parallel u \parallel_{L^{\gamma(x)}(\Omega)}<1$, then $\parallel u \parallel_{L^{\gamma(x)}(\Omega)}^{\gamma^{+}}\leq\rho_{\gamma}(u)\leq\parallel u \parallel_{L^{\gamma(x)}(\Omega)}^{\gamma^{-}}.$
	\end{enumerate}
\end{lemma}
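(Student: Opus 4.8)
The plan is to reduce all three items to a single auxiliary fact: for every nonzero $u\in L^{\gamma(x)}(\Omega)$ the Luxemburg norm $t:=\|u\|_{L^{\gamma(x)}(\Omega)}$ is positive and $\rho_{\gamma}(u/t)=1$. Granting this, (ii), (iii) and then (i) all follow from the pointwise factorization $|u(x)|^{\gamma(x)}=t^{\gamma(x)}\,|u(x)/t|^{\gamma(x)}$ together with the two-sided bounds $t^{\gamma^-}\le t^{\gamma(x)}\le t^{\gamma^+}$ when $t>1$, and their reverse when $t<1$. Every assertion is trivial when $u\equiv0$, so assume $u\not\equiv0$ throughout.

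First I would analyze $g(\eta):=\rho_\gamma(u/\eta)=\int_\Omega|u(x)|^{\gamma(x)}\eta^{-\gamma(x)}\,dx$ for $\eta>0$. Because $1<\gamma^-\le\gamma(x)\le\gamma^+<\infty$, each map $\eta\mapsto\eta^{-\gamma(x)}$ is continuous and strictly decreasing, so $g$ is non-increasing; it is continuous on $(0,\infty)$ by dominated convergence, the integrand being bounded on any $[a,b]\subset(0,\infty)$ by $\max(a^{-\gamma^-},a^{-\gamma^+})\,|u|^{\gamma(x)}$, which is integrable since $\rho_\gamma(u)<\infty$. Moreover $g(\eta)\le\eta^{-\gamma^-}\rho_\gamma(u)\to 0$ as $\eta\to\infty$, while picking $\varepsilon>0$ with $|\{|u|\ge\varepsilon\}|>0$ gives $g(\eta)\ge(\varepsilon/\eta)^{\gamma^-}\,|\{|u|\ge\varepsilon\}|\to\infty$ as $\eta\to 0^+$. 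Hence $\{\eta>0:g(\eta)\le1\}$ is a closed half-line $[\eta_0,\infty)$ with $0<\eta_0<\infty$, so $t=\|u\|_{L^{\gamma(x)}(\Omega)}=\eta_0>0$; letting $\eta\downarrow\eta_0$ and $\eta\uparrow\eta_0$ and applying monotone convergence to the monotone integrands on each side gives $g(\eta_0)\le1$ and $g(\eta_0)\ge1$, i.e. $\rho_\gamma(u/t)=1$.

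With this in hand, use $|u|^{\gamma(x)}=t^{\gamma(x)}|u/t|^{\gamma(x)}$. If $t>1$, then $t^{\gamma^-}\le t^{\gamma(x)}\le t^{\gamma^+}$ pointwise; integrating and using $\rho_\gamma(u/t)=1$ gives $t^{\gamma^-}\le\rho_\gamma(u)\le t^{\gamma^+}$, which is (ii). If $t<1$, the bounds on $t^{\gamma(x)}$ reverse and the identical computation gives $t^{\gamma^+}\le\rho_\gamma(u)\le t^{\gamma^-}$, which is (iii). For (i), (ii) yields $t>1\Rightarrow\rho_\gamma(u)\ge t^{\gamma^-}>1$, (iii) yields $t<1\Rightarrow\rho_\gamma(u)\le t^{\gamma^-}<1$, and $t=1$ gives $\rho_\gamma(u)=\rho_\gamma(u/t)=1$ outright; since the cases $t<1,\ t=1,\ t>1$ and the cases $\rho_\gamma(u)<1,\ \rho_\gamma(u)=1,\ \rho_\gamma(u)>1$ are each exhaustive and mutually exclusive, these three implications are equivalent to the three equivalences in (i).

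The only step requiring genuine care is the analysis of $g$ in the second paragraph: showing that the infimum defining $\|u\|_{L^{\gamma(x)}(\Omega)}$ is attained and that the modular there equals exactly $1$ (not merely $\le1$). This is where the hypotheses $\gamma\in C_+(\overline{\Omega})$, i.e. $\gamma^->1$ and $\gamma^+<\infty$, together with the convergence theorems and the finiteness $\rho_\gamma(u)<\infty$, are used. No homogeneity of the modular is available since the exponent is non-constant, so one cannot scale directly; the pointwise factorization $|u|^{\gamma(x)}=t^{\gamma(x)}|u/t|^{\gamma(x)}$ with the bounds $t^{\gamma^\mp}$ on $t^{\gamma(x)}$ is the exact substitute, and everything else is routine bookkeeping.
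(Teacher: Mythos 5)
Your proof is correct. Note that the paper does not actually prove this lemma --- it is quoted verbatim from Fan--Zhao \cite{Fan1} --- so there is no in-paper argument to compare against; your argument is essentially the standard one from that reference: establish the unit-ball identity $\rho_{\gamma}(u/\|u\|_{L^{\gamma(x)}(\Omega)})=1$ for $u\not\equiv 0$ via monotonicity and continuity of $\eta\mapsto\rho_{\gamma}(u/\eta)$, then transfer it to $\rho_{\gamma}(u)$ through the pointwise bounds $t^{\gamma^{-}}\le t^{\gamma(x)}\le t^{\gamma^{+}}$ (reversed for $t<1$). All the delicate points --- attainment of the infimum, the modular equalling exactly $1$ there, the use of $1<\gamma^{-}\le\gamma^{+}<\infty$ and of $\rho_{\gamma}(u)<\infty$, and the trivial case $u\equiv 0$ --- are handled correctly.
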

\begin{lemma}\label{prp2}
	Let $u,u_{m}  \in L^{{\gamma(x)}}(\Omega),~m=1,2,3,\cdots $. Then the following statements are equivalent:
	\begin{enumerate}
		\item[(i)] $\DD{\lim_{m\ra \infty} }\parallel u_{m} - u \parallel_{L^{\gamma(x)}} =0;$
		\item[(ii)] $\DD{\lim_{m\ra \infty}} \rho_{\gamma}(u_{m} -u)=0;$
		\item[(iii)] $ u_{m} \text{ converges to $u$ in~ }  \Omega  \text { ~in measure and ~} \DD{\lim_{m\ra \infty}}  \rho_{\gamma}(u_{m})= \rho_{\gamma}(u).$
	\end{enumerate}
\end{lemma}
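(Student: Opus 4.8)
\textbf{Proof strategy for Lemma~\ref{prp2}.}

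The plan is to prove the chain of implications $(i)\Rightarrow(ii)\Rightarrow(iii)\Rightarrow(i)$, relying throughout on Lemma~\ref{prp1}, which ties the size of the modular to the size of the norm. The implication $(i)\Rightarrow(ii)$ is the easiest: assume $\|u_m-u\|_{L^{\gamma(x)}}\to 0$. For $m$ large the norm is below $1$, so part~(iii) of Lemma~\ref{prp1} applied to $u_m-u$ gives $\rho_\gamma(u_m-u)\le \|u_m-u\|_{L^{\gamma(x)}}^{\gamma^-}$, and since $\gamma^->1$ the right-hand side tends to $0$; hence $\rho_\gamma(u_m-u)\to 0$. Conversely, for $(ii)\Rightarrow(i)$, suppose $\rho_\gamma(u_m-u)\to0$. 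Fix $\varepsilon\in(0,1)$; eventually $\rho_\gamma(u_m-u)<\varepsilon^{\gamma^+}\le\varepsilon^{\gamma(x)}$ pointwise-integrated, and since $\rho_\gamma\big((u_m-u)/\varepsilon\big)=\int_\Omega |u_m-u|^{\gamma(x)}\varepsilon^{-\gamma(x)}\,dx \le \varepsilon^{-\gamma^+}\rho_\gamma(u_m-u)$, which is $<1$ for $m$ large; by the definition of the Luxemburg norm this forces $\|u_m-u\|_{L^{\gamma(x)}}\le\varepsilon$. Since $\varepsilon$ was arbitrary, $(i)$ follows. So $(i)\Leftrightarrow(ii)$ is really just unwinding Lemma~\ref{prp1} and the definition of the norm.

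For $(ii)\Rightarrow(iii)$, assume $\rho_\gamma(u_m-u)\to0$. Convergence in measure follows from Chebyshev's inequality: for $\delta>0$, on the set $\{|u_m-u|\ge\delta\}$ we have $|u_m-u|^{\gamma(x)}\ge \min(\delta^{\gamma^-},\delta^{\gamma^+})=:\delta_*$, so $\delta_*\,\lvert\{|u_m-u|\ge\delta\}\rvert\le\rho_\gamma(u_m-u)\to0$. For the modular convergence $\rho_\gamma(u_m)\to\rho_\gamma(u)$, I would use the elementary inequality: for every $\eta>0$ there is $C_\eta>0$ with $\big||a+b|^{t}-|a|^{t}\big|\le \eta|a|^{t}+C_\eta|b|^{t}$ for all $a,b\in\RR$ and $t$ in the compact range $[\gamma^-,\gamma^+]$ (with $C_\eta$ uniform in $t$). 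Applying this with $a=u$, $b=u_m-u$, $t=\gamma(x)$ and integrating gives $|\rho_\gamma(u_m)-\rho_\gamma(u)|\le \eta\,\rho_\gamma(u)+C_\eta\,\rho_\gamma(u_m-u)$; letting $m\to\infty$ and then $\eta\to0$ yields the claim (note $\rho_\gamma(u)<\infty$ since $u\in L^{\gamma(x)}(\Omega)$).

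The genuinely delicate implication is $(iii)\Rightarrow(ii)$, which is where I expect the main obstacle. Here one has $u_m\to u$ in measure and $\rho_\gamma(u_m)\to\rho_\gamma(u)$, and must deduce $\rho_\gamma(u_m-u)\to0$. The standard route is a Vitali-type / Brezis--Lieb argument adapted to the variable exponent: from convergence in measure extract a subsequence converging a.e., then use the Brezis--Lieb lemma form $\rho_\gamma(u_m)-\rho_\gamma(u_m-u)-\rho_\gamma(u)\to0$ (valid because $t\mapsto|t|^{\gamma(x)}$ satisfies the required pointwise estimate uniformly over the compact exponent range), which combined with $\rho_\gamma(u_m)\to\rho_\gamma(u)$ forces $\rho_\gamma(u_m-u)\to0$ along that subsequence; a standard subsequence-of-subsequence argument then upgrades this to the full sequence. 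The care needed is exactly that flagged in the introduction: the exponent is non-constant, so one cannot use homogeneity and must invoke the uniform-in-$t$ version of the Brezis--Lieb splitting on $[\gamma^-,\gamma^+]$, and one must check the tails are controlled using that each $\rho_\gamma(u_m)$ is bounded (which follows from $\rho_\gamma(u_m)\to\rho_\gamma(u)<\infty$). Alternatively, since this lemma is quoted from \cite{Fan1}, it would be legitimate simply to cite it; but the proof above is the one I would write out.
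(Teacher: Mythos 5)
The paper does not prove this lemma at all: it is recalled verbatim from Fan--Zhao \cite{Fan1} as part of the preliminaries, so there is no ``paper proof'' to compare against, and your closing remark that one could simply cite \cite{Fan1} is exactly what the authors do. That said, your self-contained argument is correct. The cycle $(i)\Rightarrow(ii)\Rightarrow(iii)\Rightarrow(i)$... actually you prove $(i)\Leftrightarrow(ii)$, $(ii)\Rightarrow(iii)$ and $(iii)\Rightarrow(ii)$, which together give all equivalences; the norm--modular comparisons and the Chebyshev step are routine, and the uniform-in-$t$ inequality $\bigl||a+b|^{t}-|a|^{t}\bigr|\le \eta|a|^{t}+C_\eta|b|^{t}$ for $t\in[\gamma^-,\gamma^+]$ does hold (the constant depends continuously on $t$, so one takes the supremum over the compact exponent range). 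For the delicate implication $(iii)\Rightarrow(ii)$ your Brezis--Lieb route is valid, including the needed bound $\sup_m\rho_\gamma(u_m-u)\le 2^{\gamma^+-1}\sup_m\bigl(\rho_\gamma(u_m)+\rho_\gamma(u)\bigr)<\infty$ and the subsequence-of-subsequences upgrade. A marginally more elementary alternative for that step is Scheff\'e's lemma: along an a.e.\ convergent subsequence, $g_m:=|u_m|^{\gamma(x)}\ge0$ converges a.e.\ to $g:=|u|^{\gamma(x)}$ with $\int_\Omega g_m\to\int_\Omega g<\infty$, hence $g_m\to g$ in $L^1(\Omega)$; this yields uniform integrability of $|u_m-u|^{\gamma(x)}$ via the pointwise bound above, and Vitali's theorem then gives $\rho_\gamma(u_m-u)\to0$ without invoking the uniform Brezis--Lieb splitting. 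Either way, the argument is sound.
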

\subsection{Fractional Sobolev spaces with variable exponents}
In this section, we discuss the properties of fractional Sobolev spaces with  variable exponents. These spaces have been 
introduced in \cite {KRV} for the first time. Also in \cite{AB,AR,ky ho} the authors established some important  properties of these spaces. 
\par Let $\Omega\subset\RR^N$ be a smooth bounded domain
and $p(\cdot, \cdot)$ satisfy  $(P1)-(P2)$. We denote$$ \overline{p}(x)=p(x,x)~ \text{for any~} x\in\RR^N.$$
Thus $\overline{p}\in C_{+}(\overline{\Omega}).$ Now we define
the  fractional Sobolev 
space with variable exponents as follows:
{\begin{align*}
	&{ W}=W^{s,\overline{p}(x),p(x,y)}(\Omega)\\
	&:=\bigg \{ u\in L^{ \overline{p}(x)}(\Omega):
	\int_{\Omega\times\Omega}\frac{\mid u(x)-u(y)\mid^{p(x,y)}}{\eta^{p(x,y)}\mid x-y \mid^{N+s(x,y)p(x,y)}}dxdy<\infty,
	\text{ for some }\eta>0\bigg\}.
	\end{align*}}
We set the  seminorm as:
$$[u]_{\Omega}^{s, p(x,y)}:=\DD\inf \left\{\eta >0 : \int_{\Omega\times\Omega}\frac{\mid
	u(x)-u(y)\mid^{p(x,y)}}{\eta^{p(x,y)}\mid x-y \mid^{N+s(x,y)p(x,y)}}dxdy<1\right\}.$$
\noindent Then $({W}, \|\cdot\|_{{W}})$ is a separable reflexive Banach space {(see \cite{AR})} equipped with the norm 
\begin{align*}
\| u \|_{\overline{W}}:= \| u \|_{L^{\overline{p}(x)}(\Omega)}+
[u]_{\Omega}^{s, p(x,y)}.
\end{align*}
\noindent We state the following continuous and compact embedding theorem for ${ W}$ as studied in \cite{ky ho}.
\begin{theorem}\label{embd}
	Let $\Omega$ be a smooth bounded domain in $\mathbb{R}^N$, $s\in(0,1)$ and $p(\cdot, \cdot)$ satisfied $(P1)-(P2)$ with $sp^+<N.$  
	Let $r\in C_{+}(\overline{\Omega})$ 
	such that $ 1<r^{-}\leq r(x)<p_s^{*}(x)=\frac{N\tilde{p}(x)}{N-s\tilde{p}}$
	for $x\in\overline \Omega$. Then,
	there exits a constant $C=C(N,s,p,r,\Omega)>0$
	such that,
	for any $u\in {W}$,
	$$
	\| u \|_{L^{r(x)}(\Omega)}\leq K \| u \|_{{ W}}.$$			 			
	Moreover, this embedding is compact. 	 
\end{theorem}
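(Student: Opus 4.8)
The plan is to prove the embedding by a localisation argument: cover the compact set $\overline{\Omega}$ by finitely many small balls on each of which the variable exponents are squeezed between constants, reduce there to the classical fractional Sobolev embedding with constant exponents, and then patch the local estimates together. The only genuinely non‑routine ingredient — caused by the non‑homogeneity of the variable‑exponent Gagliardo modular — will be comparing that modular with a constant‑exponent one near the diagonal; I will point this out as the main obstacle.

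\emph{The good cover.} Since $t\mapsto \frac{Nt}{N-st}$ is continuous and strictly increasing on $(0,N/s)$ and $s\,\overline{p}(x_0)\le sp^+<N$ for every $x_0\in\overline{\Omega}$, as $\delta\to0$ the number $\big(\inf_{(x,y)\in(B(x_0,\delta)\cap\overline{\Omega})^2}p(x,y)\big)^{*}_s$ tends to $p_s^{*}(x_0)>r(x_0)$ while $\sup_{B(x_0,\delta)\cap\overline{\Omega}}r\to r(x_0)$; hence a sufficiently small ball $B(x_0,\delta_{x_0})$ satisfies $\sup_{B(x_0,\delta_{x_0})\cap\overline{\Omega}}r<\big(\inf_{(x,y)\in(B(x_0,\delta_{x_0})\cap\overline{\Omega})^2}p(x,y)\big)^{*}_s$, and, after shrinking further if needed (using that $\Omega$ is smooth), $B(x_0,\delta_{x_0})\cap\Omega$ is a bounded Lipschitz domain. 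By compactness I extract a finite subcover $B_1,\dots,B_m$; writing $\Omega_i=B_i\cap\Omega$, $q_i=\inf_{(B_i\cap\overline{\Omega})^2}p>1$ and $r_i=\sup_{\Omega_i}r$, I then have $r(x)\le r_i<q_i^{*}:=\tfrac{Nq_i}{N-sq_i}$ and $q_i\le\overline{p}(x)$ for $x\in\Omega_i$.

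\emph{Reduction to constant exponents on each $B_i$ — the crux.} Fix $i$, abbreviate $q=q_i$, and choose any $s''\in(s,1)$. For $u\in W$ I split $\Omega_i\times\Omega_i$ into the three regions $\{|x-y|\ge1\}$, $\{|x-y|<1,\ |u(x)-u(y)|\le|x-y|^{s''}\}$, $\{|x-y|<1,\ |u(x)-u(y)|>|x-y|^{s''}\}$ and bound $\iint_{\Omega_i\times\Omega_i}\frac{|u(x)-u(y)|^{q}}{|x-y|^{N+sq}}\,dx\,dy$ on each. On the first, $|x-y|^{-N-sq}\le1$, so the contribution is $\le 2^{q+1}|\Omega_i|\,\|u\|_{L^{q}(\Omega_i)}^{q}$. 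On the second, $\frac{|u(x)-u(y)|^{q}}{|x-y|^{N+sq}}\le|x-y|^{(s''-s)q-N}$ with $(s''-s)q-N>-N$, so the contribution is at most a finite constant depending only on $q,s,s'',N,|\Omega_i|$ and not on $u$. On the third, $q\le p(x,y)$ and $|u(x)-u(y)|>|x-y|^{s''}$, $|x-y|<1$ give $|u(x)-u(y)|^{q-p(x,y)}\le|x-y|^{s''(q-p(x,y))}$, hence (using $s''>s$) $\frac{|u(x)-u(y)|^{q}}{|x-y|^{N+sq}}\le\frac{|u(x)-u(y)|^{p(x,y)}}{|x-y|^{N+sp(x,y)}}$, so this piece is $\le\iint_{\Omega\times\Omega}\frac{|u(x)-u(y)|^{p(x,y)}}{|x-y|^{N+sp(x,y)}}\,dx\,dy$, which by the standard modular–seminorm relation for $[\cdot]_{\Omega}^{s,p(x,y)}$ is $\le\max\{[u]^{p^-},[u]^{p^+}\}\le\max\{\|u\|_W^{p^-},\|u\|_W^{p^+}\}$. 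Combining with $L^{\overline{p}(x)}(\Omega)\hookrightarrow L^{q}(\Omega_i)$ (so $\|u\|_{L^{q}(\Omega_i)}\le C\|u\|_W$), I conclude $u|_{\Omega_i}\in W^{s,q_i}(\Omega_i)$ with $[u]_{W^{s,q_i}(\Omega_i)}$ bounded by a function of $\|u\|_W$ alone, in particular bounded on $\{\|u\|_W\le1\}$ and on bounded subsets of $W$.

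\emph{Continuity, compactness, and the obstacle.} For continuity, as $\|\cdot\|_{L^{r(x)}(\Omega)}$ and $\|\cdot\|_W$ are homogeneous it suffices to bound $\|u\|_{L^{r(x)}(\Omega)}$ when $\|u\|_W=1$; then the previous step bounds $\|u\|_{W^{s,q_i}(\Omega_i)}$, the classical fractional Sobolev embedding on the bounded Lipschitz domain $\Omega_i$ (using $r_i<q_i^{*}$; see \cite{HG}) gives $\|u\|_{L^{r_i}(\Omega_i)}\le C$, hence $\|u\|_{L^{r(x)}(\Omega_i)}\le C'$ since $r(\cdot)\le r_i$ on the bounded set $\Omega_i$, and summing the pieces gives $\|u\|_{L^{r(x)}(\Omega)}\le\sum_{i=1}^m\|u\|_{L^{r(x)}(\Omega_i)}\le K$. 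For compactness, given a bounded sequence $(u_n)$ in $W$, each $(u_n|_{\Omega_i})_n$ is bounded in $W^{s,q_i}(\Omega_i)$, so by the classical compact embedding $W^{s,q_i}(\Omega_i)\hookrightarrow\hookrightarrow L^{r_i}(\Omega_i)$ (strict inequality $r_i<q_i^{*}$) and a finite diagonal extraction over $i=1,\dots,m$ I get a subsequence converging in every $L^{r_i}(\Omega_i)$; since $r(\cdot)\le r_i$ on $\Omega_i$, this forces convergence of the associated modulars, hence by Lemma \ref{prp2} convergence in $L^{r(x)}(\Omega_i)$, and summing over $i$ gives convergence in $L^{r(x)}(\Omega)$. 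I expect the crux step to be the main obstacle: the near‑diagonal part of the constant‑exponent Gagliardo modular is not controlled by the variable one, and the threshold decomposition at $|u(x)-u(y)|=|x-y|^{s''}$ with $s''>s$ is exactly the device that pushes the near‑diagonal part onto a $u$‑independent $L^1$ kernel while the remainder stays controlled by the variable modular.
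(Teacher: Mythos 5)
The paper itself does not prove this theorem; it is imported from \cite{ky ho} (the same localisation strategy originates in \cite{KRV}). Your overall architecture --- a finite good cover, reduction to a constant-exponent fractional Sobolev space on each piece, the classical embedding, and patching --- is exactly the right one and matches those proofs. However, your ``crux'' step contains a genuine error, and it sits precisely at the obstacle you yourself identified. On the third region you need $|x-y|^{s''(q-p(x,y))}\le |x-y|^{s(q-p(x,y))}$ to pass from $|u(x)-u(y)|^{q-p(x,y)}\le|x-y|^{s''(q-p(x,y))}$ to the desired pointwise domination. Since $q-p(x,y)\le 0$ and $s''>s$, we have $s''(q-p(x,y))\le s(q-p(x,y))$, and since $|x-y|<1$ the map $t\mapsto|x-y|^{t}$ is \emph{decreasing}, so in fact $|x-y|^{s''(q-p(x,y))}\ge|x-y|^{s(q-p(x,y))}$: the inequality goes the wrong way, by the factor $\bigl(|x-y|^{s}/|u(x)-u(y)|\bigr)^{p(x,y)-q}$, which on your region is only bounded by $|x-y|^{-(s''-s)(p(x,y)-q)}$ and blows up near the diagonal. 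This cannot be absorbed into a constant: on the sub-region $|x-y|^{s''}<|u(x)-u(y)|\le|x-y|^{s}$ the integrand $\frac{|u(x)-u(y)|^{q}}{|x-y|^{N+sq}}$ can be of order $|x-y|^{-N}$, which is not integrable, and no choice of $s''$ rescues the decomposition as long as the target space is $W^{s,q_i}(\Omega_i)$ with the \emph{same} order $s$ (with $s''<s$ your second region fails instead; with $s''=s$ it is exactly borderline).

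The standard repair, which is what \cite{KRV} and \cite{ky ho} do, is to lower the target differentiability rather than raise the threshold exponent: after shrinking the balls so that $r_i<\frac{Nq_i}{N-sq_i}$ with room to spare, pick $\sigma\in(0,s)$ close enough to $s$ that still $r_i<\frac{Nq_i}{N-\sigma q_i}$, and estimate the $W^{\sigma,q_i}(\Omega_i)$ modular with the split at $|u(x)-u(y)|=|x-y|^{s}$ (the original $s$). On $\{|x-y|<1,\ |u(x)-u(y)|\le|x-y|^{s}\}$ the integrand is $\le|x-y|^{(s-\sigma)q_i-N}$, integrable; on $\{|x-y|<1,\ |u(x)-u(y)|>|x-y|^{s}\}$ one has $|u(x)-u(y)|^{q_i-p(x,y)}\le|x-y|^{s(q_i-p(x,y))}$ and hence $\frac{|u(x)-u(y)|^{q_i}}{|x-y|^{N+\sigma q_i}}\le\frac{|u(x)-u(y)|^{p(x,y)}}{|x-y|^{N+sp(x,y)}}\,|x-y|^{(s-\sigma)q_i}\le\frac{|u(x)-u(y)|^{p(x,y)}}{|x-y|^{N+sp(x,y)}}$, which is controlled by the variable modular as you intended. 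With that replacement, the remainder of your argument --- the homogeneity reduction, $L^{r_i}(\Omega_i)\hookrightarrow L^{r(x)}(\Omega_i)$, the summation over the cover, and the compactness via the classical compact embedding together with Lemma \ref{prp2} --- goes through unchanged.
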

\noindent For studying nonlocal problems 
involving the operator $(-\Delta)_{p(\cdot)}^{s}$ with Dirichlet boundary datum via variational methods, we define another new fractional type 
Sobolev spaces with variable exponents. 
One can refer {\cite{Bisci} }and references therein for this type of spaces in fractional $p$-Laplacian framework.
We set $\mathcal{Q}:=\mathbb R^{2N}\setminus(\Omega^c\times\Omega^c)$ and define the new fractional Sobolev space with variable exponent as:
{\begin{align*}
	{X}&=X^{s,\overline{p}(x),p(x,y)}(\Omega)\\& :=\bigg\{u:\RR^N\rightarrow\mathbb R:u_{|_\Omega}\in L^{\overline{p}(x)}(\Omega),\\
	&\;\;\;\;\;\;\;\;\int_{\mathcal{Q}}\frac{|u(x)-u(y)|^{p(x,y)}}{\eta^{p(x,y)}| x-y|^{N+sp(x,y)}}dxdy<\infty,\text{ for some } \eta>0\bigg\}.
	\end{align*}}
The space ${X}$ is equipped with the following norm:
{$$\| u\|_{ {X}}:=\| u\|_{L^{\overline{p}(x)}(\Omega)}+\inf\Big\{\eta>0:\int_Q\frac{\mid
		u(x)-u(y)\mid^{p(x,y)}}{\eta^{p(x,y)}\mid x-y \mid^{N+sp(x,y)}}dxdy<1\Big\},$$} where $[u]_{X}$ is the seminorm, defined as
$$[u]_{X}=\inf\Big\{\eta>0:\int_{\mathcal{Q}}\frac{\mid
	u(x)-u(y)\mid^{p(x,y)}}{\eta^{p(x,y)}\mid x-y \mid^{N+sp(x,y)}}dxdy<1\Big\}.$$
Then $(X,\|\cdot\|_{X})$ is a separable reflexive Banach space.
Next we define the subspace ${X}_0$ of ${ X}$ as 
$${ X_0}={ X}_0^{s,\overline{p}(x),p(x,y)}(\Omega):=\{u\in { X}\;:\; u=0\;a.e.\; in\;\Omega^c\}.$$
We define the norm on ${ X}_0$ as follows:
\begin{align*}
\| u\|_{{ X}_0}:=\inf\Big\{\eta>0:\int_{\mathcal{Q}}\frac{\mid
	u(x)-u(y)\mid^{p(x,y)}}{\eta^{p(x,y)}\mid x-y \mid^{N+sp(x,y)}}dxdy<1\Big\}.
\end{align*}
\begin{remark}\label{rem1}
	For $u\in X_0,$ we get
	$$\int_{\mathcal{Q}}\frac{\mid
		u(x)-u(y)\mid^{p(x,y)}}{\eta^{p(x,y)}\mid x-y \mid^{N+sp(x,y)}}dxdy= \int_{\RR^N\times\RR^N}\frac{\mid
		u(x)-u(y)\mid^{p(x,y)}}{\eta^{p(x,y)}\mid x-y \mid^{N+sp(x,y)}}dxdy.$$ Thus we have
	\begin{align*}
	\| u\|_{{ X}_0}:=\inf\Big\{\eta>0:\int_{\RR^N\times\RR^N}\frac{\mid
		u(x)-u(y)\mid^{p(x,y)}}{\eta^{p(x,y)}\mid x-y \mid^{N+s(x,y)p(x,y)}}dxdy<1\Big\}.
	\end{align*}  
\end{remark}
Now we have the following 
continuous and compact embedding result for the space ${ X}_0$. 
The proof follows from the Theorem 2.2 and Remark 2.2 in \cite{ABS}.
\begin{theorem}\label{prp 3.3}
	Let $\Omega $ be a smooth bounded domain in $\mathbb{R}^N $ and let $s\in(0,1).$ Let $p(\cdot, \cdot)$  satisfy $(P1)-(P2)$  with $sp^+<N$.
	Then for any  $r\in C_+(\overline{\Omega})$ such that $1<r(x)< p_s^*(x)$ for all $x\in \overline{\Omega}$,
	there exits a constant $C=C(N,s,p,r,\Omega)>0$
	such that for every $u\in{ X}_0$, 
	\begin{align*}
	\| u \|_{L^{r(x)}(\Omega)}\leq C \| u \|_{{ X}_0}.
	\end{align*}
	Moreover, this embedding is
	compact.
\end{theorem}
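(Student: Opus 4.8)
The plan is to factor the claimed embedding $X_0\hookrightarrow L^{r(\cdot)}(\Omega)$ through the space $\overline{W}=W^{s,\overline{p}(x),p(x,y)}(\Omega)$ of Theorem \ref{embd}. Concretely, I would first show that the restriction map $\iota:X_0\to \overline{W}$, $u\mapsto u|_\Omega$, is a well-defined bounded linear operator, i.e. there is $C_0>0$ with $\|u|_\Omega\|_{\overline{W}}\le C_0\|u\|_{X_0}$ for all $u\in X_0$; then Theorem \ref{embd}, applied with the given $r$ (which satisfies $1<r^-\le r(x)<p^*_s(x)$ as required, since $r\in C_+(\overline\Omega)$), yields $K>0$ with $\|u\|_{L^{r(x)}(\Omega)}\le K\,\|u|_\Omega\|_{\overline{W}}$, and composing gives the continuous embedding with constant $KC_0$. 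Compactness then comes essentially for free: a bounded sequence in $X_0$ maps under $\iota$ to a bounded sequence in $\overline{W}$, which by the compactness part of Theorem \ref{embd} has a subsequence converging in $L^{r(x)}(\Omega)$; hence bounded subsets of $X_0$ are precompact in $L^{r(x)}(\Omega)$, which is exactly compactness of the embedding.

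\textbf{Well-definedness and the easy half.} That $\iota$ is well defined is built into the definitions: if $u\in X_0\subset X$ then $u|_\Omega\in L^{\overline{p}(x)}(\Omega)$, and since $\Omega\times\Omega\subseteq\mathcal{Q}$ the Gagliardo integral of $u$ over $\Omega\times\Omega$ is dominated by the one over $\mathcal{Q}$, so it is finite and $u|_\Omega\in \overline{W}$. The same domain monotonicity gives the cheap seminorm bound $[u|_\Omega]_\Omega^{s,p(x,y)}\le [u]_X=\|u\|_{X_0}$ (any $\eta$ admissible in the $\mathcal{Q}$-infimum is admissible in the $\Omega\times\Omega$ one). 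Hence $\|u|_\Omega\|_{\overline{W}}=\|u\|_{L^{\overline{p}(x)}(\Omega)}+[u|_\Omega]_\Omega^{s,p(x,y)}\le\|u\|_{L^{\overline{p}(x)}(\Omega)}+\|u\|_{X_0}$, and the whole matter reduces to a fractional Poincaré-type inequality $\|u\|_{L^{\overline{p}(x)}(\Omega)}\le C\,\|u\|_{X_0}$ valid on $X_0$.

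\textbf{The Poincaré inequality (main obstacle).} This is the real work and the place where the variable exponents bite. Fix a ball $B$ with $\overline\Omega\subset B$, $|B\setminus\overline\Omega|>0$ and $R:=\operatorname{diam}(B)\ge 1$. For $u\in X_0$ one has $u\equiv 0$ on $\Omega^c\supseteq B\setminus\overline\Omega$, so for $x\in\Omega$, $y\in B\setminus\overline\Omega$ we get $|u(x)-u(y)|=|u(x)|$ and $|x-y|^{N+sp(x,y)}\le R^{N+sp^+}$; restricting the $\mathcal{Q}$-integral to $\Omega\times(B\setminus\overline\Omega)$ and using $|u(x)|^{p(x,y)}\ge\min(|u(x)|^{p^-},|u(x)|^{p^+})$ bounds the Gagliardo modular of $u$ below by $c_B\int_\Omega\min(|u(x)|^{p^-},|u(x)|^{p^+})\,dx$ with $c_B>0$. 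One then passes between modular and norm via Lemma \ref{prp1} (both for $\|\cdot\|_{L^{\overline{p}(x)}(\Omega)}$ and for $\|\cdot\|_{X_0}$, whose defining functional obeys the analogous relations) and treats the regions $\{|u|\ge 1\}$ and $\{|u|<1\}$ separately to arrive at $\|u\|_{L^{\overline{p}(x)}(\Omega)}\le C\,\|u\|_{X_0}$. The subtlety that the constant-exponent argument does not see is that $p(x,y)$ for $y$ far from $x$ need not be comparable with $\overline{p}(x)=p(x,x)$; this mismatch is absorbed by exploiting the continuity of $p$ (so that near the diagonal $p(x,y)\approx\overline{p}(x)$, allowing the $\Omega\times\Omega$ portion of the Gagliardo integral to control the region where $|u|$ is large), together with the norm–modular estimates — precisely the kind of careful case analysis flagged in the Introduction. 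This is the content of Theorem 2.2 and Remark 2.2 of \cite{ABS}.

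\textbf{Conclusion.} With $\|u|_\Omega\|_{\overline{W}}\le(C+1)\|u\|_{X_0}$ established, Theorem \ref{embd} gives $\|u\|_{L^{r(x)}(\Omega)}\le K(C+1)\|u\|_{X_0}$, which is the continuous embedding, and the compactness argument of the first paragraph finishes the proof. I expect the Poincaré inequality to be the only nontrivial step; an alternative that avoids the reduction to $\overline{W}$ is to rerun the proof of Theorem \ref{embd} directly on $X_0$, using Remark \ref{rem1} to replace $\mathcal{Q}$ by $\RR^N\times\RR^N$, but this merely duplicates the same estimates.
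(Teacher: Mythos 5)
Your proposal is correct and follows essentially the same route as the paper, which simply invokes Theorem 2.2 and Remark 2.2 of \cite{ABS} for this embedding; your reduction (restriction to $\overline{W}$ by domain monotonicity of the Gagliardo seminorm, a fractional Poincar\'e inequality $\|u\|_{L^{\overline{p}(x)}(\Omega)}\le C\|u\|_{X_0}$, then Theorem \ref{embd}) is precisely the content of that cited result, and you correctly identify the Poincar\'e step and the off-diagonal behaviour of $p(x,y)$ as the only nontrivial points before deferring them to the same reference. Nothing further is needed.
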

\begin{definition}
	For $u\in { X}_0$, we define the  modular  $\rho_{{ X}_0}:{ X}_0\ra \RR$ as follows:
	\begin{align}\label{modular}
	\rho_{{ X}_0}(u):=\int_{\RR^n \times \RR^n}\frac{\mid
		u(x)-u(y)\mid^{p(x,y)}}{\mid x-y \mid^{N+s(x,y)p(x,y)}}dxdy.\end{align}
\end{definition}
The interplay between the norm in ${ X}_0$ and the modular function $\rho_{{ X}_0}$ can be studied in the following lemma:		 	                                                       
\begin{lemma}\label{lem 3.1}
	Let $u \in { X}_0$ and $\rho_{{ X}_0}$ be  defined as in \eqref{modular}. Then we have the following results:
	\begin{enumerate}
		\item[$(i)$]$ \| u \|_{{ X}_0}<1(=1;>1)\text {~if and only if~} \rho_{{ X}_0}(u)<1(=1;>1).$
		\item[$(ii)$] If $\| u \|_{{ X}_0}>1$, then
		$
		\| u \|_{{ X}_0} ^{p^{-}}\leq\rho_{{ X}_0}(u)\leq\| u \|_{{ X}_0}^{p{+}}.
		$
		\item[$(iii)$] If $\| u \|_{{ X}_0}<1$, then 
		$\| u \|_{{ X}_0} ^{p^{+}}\leq\rho_{{ X}_0}(u)\leq\| u \|_{{ X}_0}^{p{-}}.
		$
	\end{enumerate}
\end{lemma}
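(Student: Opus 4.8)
The plan is to transfer to $X_0$ the norm--modular comparison for variable--exponent Lebesgue spaces (Lemma~\ref{prp1}) by reducing everything to a single real variable. For $u\in X_0$ I would set
\[
\Phi_u(\eta):=\rho_{X_0}\!\Big(\frac{u}{\eta}\Big)=\int_{\RR^N\times\RR^N}\frac{|u(x)-u(y)|^{p(x,y)}}{\eta^{p(x,y)}\,|x-y|^{N+sp(x,y)}}\,dx\,dy,\qquad \eta>0,
\]
which, by Remark~\ref{rem1}, is exactly the integral over $\mathcal Q$ entering the definition of $\|\cdot\|_{X_0}$; thus $\|u\|_{X_0}=\inf\{\eta>0:\Phi_u(\eta)<1\}$ and $\Phi_u(1)=\rho_{X_0}(u)$. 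The case $u=0$ being trivial, assume $u\neq 0$ and write $\rho:=\rho_{X_0}(u)$.

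First I would record two elementary facts. Since $\eta\mapsto\eta^{-p(x,y)}$ is strictly decreasing for each $(x,y)$, the function $\Phi_u$ is non-increasing on $(0,\infty)$, so the sublevel set $\{\eta>0:\Phi_u(\eta)<1\}$ is an up-set, hence an interval of the form $(\|u\|_{X_0},\infty)$ or $[\|u\|_{X_0},\infty)$ (and it is nonempty, i.e.\ $\|u\|_{X_0}<\infty$, because $u\in X_0$ makes $\Phi_u$ finite at some point and then $\Phi_u(\eta)\to 0$ as $\eta\to\infty$). Moreover, because $p^{-}\le p(x,y)\le p^{+}$, for every $\eta>0$ one has the sandwich
\[
\min\{\eta^{-p^{-}},\eta^{-p^{+}}\}\,\rho\ \le\ \Phi_u(\eta)\ \le\ \max\{\eta^{-p^{-}},\eta^{-p^{+}}\}\,\rho,
\]
where in fact $\eta^{-p^{+}}\le\eta^{-p(x,y)}\le\eta^{-p^{-}}$ when $\eta\ge 1$ and the ordering reverses when $\eta\le 1$. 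Evaluating this at an $\eta$ with $\Phi_u(\eta)<\infty$ (one exists since $u\in X_0$) already shows $\rho<\infty$, which is tacitly needed below.

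Then $(i)$ would follow by testing near $\eta=1$: if $\rho<1$, pick $\eta<1$ close enough to $1$ that $\eta^{-p^{+}}\rho<1$, so $\Phi_u(\eta)<1$ and $\|u\|_{X_0}\le\eta<1$; conversely, if $\|u\|_{X_0}<1$, pick $\eta\in(\|u\|_{X_0},1)$, so $\Phi_u(\eta)<1$, and then by monotonicity $\rho=\Phi_u(1)\le\Phi_u(\eta)<1$. The cases ``$=1$'' and ``$>1$'' are handled the same way, or by trichotomy. For $(ii)$, assuming $\|u\|_{X_0}>1$: for $1<\eta<\|u\|_{X_0}$ we have $\Phi_u(\eta)\ge 1$ and $\Phi_u(\eta)\le\eta^{-p^{-}}\rho$, hence $\rho\ge\eta^{p^{-}}$, and letting $\eta\uparrow\|u\|_{X_0}$ gives $\rho\ge\|u\|_{X_0}^{p^{-}}$; while for $\eta>\|u\|_{X_0}$ we have $\Phi_u(\eta)<1$ and $\Phi_u(\eta)\ge\eta^{-p^{+}}\rho$, hence $\rho<\eta^{p^{+}}$, and letting $\eta\downarrow\|u\|_{X_0}$ gives $\rho\le\|u\|_{X_0}^{p^{+}}$. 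Part $(iii)$ is the same computation under $\|u\|_{X_0}<1$, where the reversed power ordering swaps the roles of $p^{-}$ and $p^{+}$ and yields $\|u\|_{X_0}^{p^{+}}\le\rho\le\|u\|_{X_0}^{p^{-}}$.

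The argument is essentially routine. The one place where the variable exponent really intervenes --- and the step I would treat most carefully --- is that $\Phi_u(\eta)$ is no longer a single power $\eta^{-p}\rho$ of $\eta$ as in the constant-exponent case, so the clean identity $\Phi_u(\|u\|_{X_0})=1$ must be replaced by the two-sided sandwich above together with the limiting arguments $\eta\uparrow\|u\|_{X_0}$ and $\eta\downarrow\|u\|_{X_0}$; one should also keep track of the finiteness of $\rho_{X_0}(u)$, which, as noted, is itself a by-product of the sandwich and the membership $u\in X_0$.
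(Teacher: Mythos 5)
The paper states Lemma \ref{lem 3.1} without proof, treating it as the routine transfer of the Fan--Zhao norm--modular comparison for $L^{\gamma(x)}$ (Lemma \ref{prp1}) to the Luxemburg norm on $X_0$. Your direct verification is correct and self-contained: introducing $\Phi_u(\eta)=\rho_{X_0}(u/\eta)$, establishing the two-sided bound $\min\{\eta^{-p^-},\eta^{-p^+}\}\,\rho_{X_0}(u)\le\Phi_u(\eta)\le\max\{\eta^{-p^-},\eta^{-p^+}\}\,\rho_{X_0}(u)$, and then taking one-sided limits $\eta\uparrow\|u\|_{X_0}$ and $\eta\downarrow\|u\|_{X_0}$ recovers exactly the stated inequalities, and the finiteness of $\rho_{X_0}(u)$ is correctly extracted from the membership $u\in X_0$. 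The only stylistic deviation from the usual textbook proof is that you bypass the left-continuity argument that yields $\rho_{X_0}(u/\|u\|_{X_0})=1$ in favor of the sandwich-plus-limits reasoning; this is a valid and arguably more elementary substitute, and I see no gap.
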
	The next lemma can easily be obtained using the properties of the modular function $\rho_{X_0}$ in Lemma \ref {lem 3.1}.	 	
\begin{lemma}\label{lem 3.2}
	Let $u,u_{m}  \in { X}_0$, $m\in\mathbb N$. Then the following statements are equivalent:
	\begin{enumerate}
		\item[$(i)$] $\DD{\lim_{m\ra \infty} }\| u_{m} - u \|_{{ X}_0} =0,$
		\item[$(ii)$] $\DD{\lim_{m\ra \infty}} \rho_{{ X}_0}(u_{m} -u)=0.$
	\end{enumerate}
\end{lemma}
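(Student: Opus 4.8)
The plan is to deduce the equivalence directly from Lemma \ref{lem 3.1}, applied to the sequence $w_m := u_m - u \in X_0$. The only idea needed is that any quantity converging to $0$ is eventually $<1$, so throughout the argument we stay in the sub-unit regime and only part $(iii)$ of Lemma \ref{lem 3.1} (together with part $(i)$ to pass between the norm and the modular being $<1$) comes into play; the super-unit estimates of Lemma \ref{lem 3.1}$(ii)$ are never used.

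For $(i)\Rightarrow(ii)$: assuming $\|w_m\|_{X_0}\to 0$, pick $m_0$ with $\|w_m\|_{X_0}<1$ for all $m\geq m_0$; then Lemma \ref{lem 3.1}$(iii)$ gives $\rho_{X_0}(w_m)\leq \|w_m\|_{X_0}^{p^-}$, and since $p^->1$ the right-hand side tends to $0$, whence $\rho_{X_0}(w_m)\to 0$. For $(ii)\Rightarrow(i)$: assuming $\rho_{X_0}(w_m)\to 0$, pick $m_1$ with $\rho_{X_0}(w_m)<1$ for $m\geq m_1$; by Lemma \ref{lem 3.1}$(i)$ this forces $\|w_m\|_{X_0}<1$ for such $m$, and then Lemma \ref{lem 3.1}$(iii)$ yields $\|w_m\|_{X_0}^{p^+}\leq \rho_{X_0}(w_m)$, i.e.
$$\|w_m\|_{X_0}\leq \rho_{X_0}(w_m)^{1/p^+}\longrightarrow 0.$$
This closes the equivalence.

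I do not expect any genuine obstacle: the statement is a routine norm--modular comparison, and the whole content is the bookkeeping observation that both implications are governed by the ``$<1$'' alternative of Lemma \ref{lem 3.1}, with $p^-,p^+\in(1,\infty)$ ensuring that raising to these powers preserves convergence to $0$. The only point worth stating carefully is the passage $\rho_{X_0}(w_m)<1 \Rightarrow \|w_m\|_{X_0}<1$, which is exactly Lemma \ref{lem 3.1}$(i)$ and is what lets us invoke the sub-unit inequality in the second direction.
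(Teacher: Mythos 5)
Your proof is correct and is exactly the argument the paper intends: the paper itself gives no details, remarking only that the lemma "can easily be obtained using the properties of the modular function $\rho_{X_0}$ in Lemma \ref{lem 3.1}", and your write-up is the standard sub-unit-regime instantiation of that remark, using parts $(i)$ and $(iii)$ of Lemma \ref{lem 3.1} applied to $w_m=u_m-u$.
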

\begin{lemma}{$($\cite{ABS}$)$}\label{lem 3.3}
	$(X_0,\|\cdot\|_{X_0})$ is a separable, reflexive and uniformly convex Banach space.
\end{lemma}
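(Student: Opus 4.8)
The plan is to deduce the three properties of $(X_0,\|\cdot\|_{X_0})$ from the corresponding properties of a suitable product of variable-exponent Lebesgue spaces, via an isometric (or at least bi-continuous linear) embedding, exploiting that each of separability, reflexivity, and uniform convexity passes to closed subspaces and is inherited through such embeddings. First I would fix a countable dense subset $D\subset\mathcal{Q}$ and, writing $d\mu(x,y)=|x-y|^{-N-sp(x,y)p(x,y)}\,dx\,dy$ on $\mathcal{Q}$, observe that $p(\cdot,\cdot)\in C_+(\mathcal{Q})$ (this uses $(P1)$) so the Musielak–Orlicz/variable-exponent Lebesgue space $L^{p(x,y)}(\mathcal{Q},d\mu)$ is well defined; by the standard theory (the same references \cite{Die,Fan1} invoked in Section 2 for $L^{\gamma(x)}(\Omega)$, which carry over verbatim to a general $\sigma$-finite measure space with an exponent bounded away from $1$ and $\infty$) this space is separable, reflexive and uniformly convex. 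Next I would consider the linear map
\[
T:X_0\longrightarrow L^{\overline{p}(x)}(\Omega)\times L^{p(x,y)}(\mathcal{Q},d\mu),\qquad Tu=\Bigl(u|_\Omega,\ \tfrac{u(x)-u(y)}{|x-y|^{(N+sp(x,y)p(x,y))/p(x,y)}}\Bigr),
\]
and check it is well defined and bounded. The point is that on $X_0$ the full norm $\|\cdot\|_X$ and the seminorm $\|\cdot\|_{X_0}$ are equivalent: because $u=0$ a.e.\ in $\Omega^c$, Remark \ref{rem1} lets us integrate over all of $\RR^N\times\RR^N$, and then Theorem \ref{prp 3.3} applied with $r=\overline{p}$ gives $\|u\|_{L^{\overline{p}(x)}(\Omega)}\le C\|u\|_{X_0}$, so $\|u\|_X\le (1+C)\|u\|_{X_0}\le (1+C)\|u\|_X$. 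Hence $X_0$ with $\|\cdot\|_{X_0}$ is linearly homeomorphic to its image under $T$.

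With $T$ in hand the three conclusions follow in order. For separability: $T$ is a homeomorphism onto $T(X_0)$, which is a subset of a separable metric space, hence separable; pulling back a countable dense set through $T^{-1}$ gives one in $X_0$. For reflexivity: a product of two reflexive Banach spaces is reflexive, a closed subspace of a reflexive space is reflexive, and reflexivity is preserved under linear homeomorphism, so it suffices to argue $T(X_0)$ is closed — if $Tu_m\to(w,W)$ then $u_m\to w$ in $L^{\overline{p}(x)}(\Omega)$ (extended by $0$) and the difference quotients converge in $L^{p(x,y)}(\mathcal{Q},d\mu)$, whence a subsequence converges a.e., forcing $W(x,y)=(w(x)-w(y))/|x-y|^{(\cdots)/p(x,y)}$ and $w\in X_0$ with $Tw=(w|_\Omega,W)$.

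For uniform convexity one must be a little more careful, since uniform convexity is not in general inherited by a closed subspace under a mere homeomorphism — one needs an \emph{isometry}. So I would instead equip $X_0$ temporarily with the norm coming from the Luxemburg norm of $L^{\overline{p}(x)}(\Omega)\times L^{p(x,y)}(\mathcal{Q},d\mu)$ under a uniformly convex product norm (e.g.\ the $\ell^2$-combination of the two Luxemburg norms, which is uniformly convex because each factor is and the $\ell^2$-sum of uniformly convex spaces is uniformly convex), under which $T$ is an isometry onto a closed subspace, hence uniformly convex; then transfer back to $\|\cdot\|_{X_0}$ using that the two norms on $X_0$ are equivalent and invoking the Clarkson-type inequalities directly — concretely, the cleanest route is to prove Clarkson-type inequalities for $\rho_{X_0}$ itself (this is where Lemma \ref{lem 3.1}, relating $\|\cdot\|_{X_0}$ and $\rho_{X_0}$, does the work), mimicking the classical proof of uniform convexity of $L^p$ for $p\ge 2$ and $1<p\le 2$ separately with $p^-,p^+$ playing the role of the constant exponent bounds. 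I expect the uniform convexity to be the main obstacle: the separability and reflexivity are soft consequences of the embedding $T$ plus the known structure of variable-exponent Lebesgue spaces, whereas uniform convexity genuinely requires either the isometric-embedding bookkeeping above or a hands-on Clarkson inequality argument adapted to the variable exponent and to the difference-quotient structure of $\rho_{X_0}$. In the write-up I would cite \cite{ABS} (as the statement already does) for the detailed verification and present here the embedding $T$ and the reduction to $L^{p(x,y)}(\mathcal{Q},d\mu)$ as the conceptual core.
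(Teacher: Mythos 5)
The paper does not actually prove this lemma; it is quoted verbatim from \cite{ABS}, so there is no internal proof to compare against. Your sketch follows the same standard route as that reference (embed $X_0$ into a variable-exponent Lebesgue space over $\mathcal{Q}$ with the singular kernel absorbed into the measure, then use closedness of the image), and the separability and reflexivity parts are sound: both properties pass through a linear homeomorphism onto a closed subspace of a separable reflexive space, and your closedness argument (norm convergence forces a.e.\ convergence along a subsequence, which identifies the limit of the difference quotients and shows the limit vanishes on $\Omega^c$) is the right one. One bookkeeping slip to fix: you put the weight into $d\mu$ \emph{and} divide the second component of $T$ by $|x-y|^{(\cdots)/p(x,y)}$; you should do one or the other, not both (and the exponent $sp(x,y)p(x,y)$ you wrote has a spurious factor of $p(x,y)$).

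The uniform convexity step is where your write-up is both roundabout and, as literally stated, circular: you correctly warn that uniform convexity does not survive passage to an equivalent norm, yet your final plan still ends with ``transfer back to $\|\cdot\|_{X_0}$ using that the two norms are equivalent,'' which is exactly the forbidden move, and the fallback (Clarkson-type inequalities for $\rho_{X_0}$ from scratch) is far more work than needed. The observation that dissolves the difficulty is that $\|\cdot\|_{X_0}$ is \emph{not} the restriction of any product norm: by its very definition it \emph{is} the Luxemburg norm of $u(x)-u(y)$ in $L^{p(x,y)}(\mathcal{Q},d\mu)$. Hence the single map $u\mapsto u(x)-u(y)$ is already an isometry of $(X_0,\|\cdot\|_{X_0})$ onto a subspace of $L^{p(x,y)}(\mathcal{Q},d\mu)$ (injective because $u(x)-u(y)=0$ a.e.\ on $\Omega\times\Omega^c$ forces $u\equiv 0$), closed by the same argument you gave for reflexivity, and all three properties --- separability, reflexivity, uniform convexity for $1<p^-\leq p^+<\infty$ --- are inherited directly. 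The product space and the bound $\|u\|_{L^{\overline{p}(x)}(\Omega)}\leq C\|u\|_{X_0}$ from Theorem \ref{prp 3.3} are only needed to check that $\|\cdot\|_{X_0}$ is an honest norm equivalent to $\|\cdot\|_X$ on $X_0$, not to run the convexity argument.
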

\begin{remark}
	We define $E:=X_0\times X_0$ as the solution space corresponding to our problem \eqref{mainprob}, equipped with the norm $\|(u,v)\|=\max\{\|u\|_{X_0}, \|v\|_{X_0}\}.$ Clearly $(E, \|(\cdot, \cdot)\|)$ is a reflexive, separable Banach space. 
\end{remark}
\section{ Nehari manifold and Fibering map analysis} 
Here first we discuss certain technical results regarding the Nehari manifold and the 
Fibering map and the behavior of the energy functional corresponding to the problem \eqref{mainprob}.
The Euler functional $J_{\la,\mu}:E\ra\RR$ associated to the problem \eqref{mainprob} is defined as  
{\small	\begin{align}\label{energy}
	J_{\la,\mu}(u,v)	&=\int_{\RR^N\times\RR^N}\frac{1}{p(x,y)}\frac{|
		u(x)-u(y)|^{p(x,y)}}{|
		x-y|^{N+sp(x,y)}}dxdy + \int_{\RR^N\times\RR^N}\frac{1}{p(x,y)}\frac{|
		v(x)-v(y)|^{p(x,y)}}{|
		x-y|^{N+sp(x,y)}}dxdy\nonumber\\ &-\int_{\Omega} \frac{1}{q(x)} \Big(\la a(x)| u|^{q(x)} +
	\mu b(x)| v|^{q(x)} \Big) dx \nonumber\\&-\int_{\Omega}\frac{1}{\alpha(x)+\beta(x)}c(x)| u|^{\alpha(x)}| v| ^{\beta(x)} dx.
	\end{align} }
By a direct computation it easily follows that  $J_{\la,\mu}\in C^{1}(E,\RR) $  and 
{\small	\begin{align*}
	\langle J'_{\la,\mu}(u,v), (\phi,\psi)\rangle	&=\int_{\RR^N\times\RR^N}\frac{|
		u(x)-u(y)|^{p(x,y)-2}(u(x)-u(y))(\phi(x)-\phi(y))}{|
		x-y|^{N+sp(x,y)}}dxdy \nonumber\\&+ \int_{\RR^N\times\RR^N}\frac{|
		v(x)-v(y)|^{p(x,y)-2}(v(x)-v(y))(\psi(x)-\psi(y))}{|
		x-y|^{N+sp(x,y)}}dxdy\nonumber\\&-\int_{\Omega}\Big(\la a(x)| u|^{q(x)-2}u\phi+
	\mu b(x)| v|^{q(x)-2}v\psi\Big) dx\nonumber\\ &-\int_{\Omega}\frac{\alpha(x)}{\alpha(x)+\beta(x)}c(x)| u|^{\alpha(x)-2}u| v| ^{\beta(x)}\phi dx\nonumber\\&-\int_{\Omega}\frac{\beta(x)}{\alpha(x)+\beta(x)}c(x)| v|^{\alpha(x)-2}v| u| ^{\beta(x)}\psi dx~~~~\text {for any } (\phi,\psi)\in E.
	\end{align*} } 
Therefore, the weak solutions of \ref{mainprob} are critical points of the functional $J_{\la,\mu}.$
One can note that $J_{\la,\mu}$ is not bounded below on $E,$ but it is bounded below on the following subset of $E$. 
We define the Nehari manifold as
$$\mathscr {N}_{\la,\mu}:=\{(u,v)\in E\setminus\{(0,0)\}~:~ \langle J'_{\la,\mu}(u,v), (u,v)\rangle=0 \}.$$
Therefore, $(u,v)\in\mathscr {N}_{\la,\mu}$ if and only if
\begin{align}\label{1.0}
&\int_{\RR^N\times\RR^N}\frac{|
	u(x)-u(y)|^{p(x,y)}}{|
	x-y|^{N+sp(x,y)}}dxdy + \int_{\RR^N\times\RR^N}\frac{|
	v(x)-v(y)|^{p(x,y)}}{|
	x-y|^{N+sp(x,y)}}dxdy\nonumber\\&-\int_{\Omega}\Big(\la a(x)| u|^{q(x)}+
\mu b(x)| v|^{q(x)}\Big) dx -\int_{\Omega}c(x)| u|^{\alpha(x)}| v| ^{\beta(x)} dx=0.
\end{align}
The Nehari manifold closely associated to the behavior of the function $\varphi_{u,v}:\RR^+\ra\RR$ for a given $(u,v)\in E$, defined 
as $\varphi_{u,v}(t)=J_{\la,\mu}(tu,tv).$
This map is called Fibering maps and was introduced by Drabek and Pohozaev in \cite{DP} and are also discussed in \cite{BZ} and \cite{SG}.
For $(u,v)\in E$, we have
{\small\begin{align}\label{1.1}
\varphi_{u,v}(t)=J_{\la,\mu}(tu,tv)&=\int_{\RR^N\times\RR^N}\frac{t^{p(x,y)}}{p(x,y)}\bigg\{\frac{|
	u(x)-u(y)|^{p(x,y)}}{|
	x-y|^{N+sp(x,y)}}+\frac{|
	v(x)-v(y)|^{p(x,y)}}{|
	x-y|^{N+sp(x,y)}}\bigg\}dxdy\nonumber\\ &-\int_{\Omega}\frac{t^{q(x)}}{q(x)}\Big(\la a(x)| u|^{q(x)}+
\mu b(x)| v|^{q(x)}\Big) dx \nonumber\\ &-\int_{\Omega}\frac{t^{\alpha(x)+\beta(x)}}{\alpha(x)+\beta(x)}c(x)| u|^{\alpha(x)}| v| ^{\beta(x)} dx.
\end{align} }
{\small\begin{align}\label{1.2}
\varphi'_{u,v}(t)&=\langle J'_{\la,\mu}(tu,tv), (u,v)\rangle\nonumber\\ &=\int_{\RR^N\times\RR^N}{t^{p(x,y)-1}}\bigg\{\frac{|
	u(x)-u(y)|^{p(x,y)}}{|
	x-y|^{N+sp(x,y)}}+\frac{|
	v(x)-v(y)|^{p(x,y)}}{|
	x-y|^{N+sp(x,y)}}\bigg\}dxdy\nonumber\\ &-\int_{\Omega}{t^{q(x)-1}}\Big(\la a(x)| u|^{q(x)}+
\mu b(x)| v|^{q(x)}\Big) dx -\int_{\Omega}{t^{\alpha(x)+\beta(x)-1}}c(x)| u|^{\alpha(x)}| v| ^{\beta(x)} dx.
\end{align}}
{\small\begin{align}\label{1.3}
\varphi''_{u,v}(t) &=\int_{\RR^N\times\RR^N}(p(x,y)-1){t^{p(x,y)-2}}\bigg\{\frac{|
	u(x)-u(y)|^{p(x,y)}}{|
	x-y|^{N+sp(x,y)}}+\frac{|
	v(x)-v(y)|^{p(x,y)}}{|
	x-y|^{N+sp(x,y)}}\bigg\}dxdy\nonumber\\ &-\int_{\Omega}(q(x)-1){t^{q(x)-2}}\Big(\la a(x)| u|^{q(x)}+
\mu b(x)| v|^{q(x)}\Big) dx \nonumber\\ &-\int_{\Omega}(\alpha(x)+\beta(x)-1){t^{\alpha(x)+\beta(x)-2}}c(x)| u|^{\alpha(x)}| v| ^{\beta(x)} dx.
\end{align} } 
Then using the fact that $\varphi'_{u,v}(t)=\langle J'_{\la,\mu}(tu,tv), (u,v)\rangle,$ we can see that $(tu,tv)\in \mathscr N_{\la,\mu}$ 
if and only if  $\varphi'_{u,v}(t)=0,$ that is, in particular $(u,v)\in \mathscr N_{\la,\mu}$ if and only if $\varphi'_{u,v}(1)=0.$ 
Thus it is natural to split $\mathscr N_{\la,\mu}$ into three parts corresponding to local maxima, local minima and points of 
inflection of the  function $\varphi_{u,v}$ as followings:
{\small$$\mathscr N^+_{\la,\mu}:=\{(u,v)\in \mathscr N_{\la,\mu} ~:~\varphi''_{u,v}(1)>0\}=\{(tu,tv)\in E\setminus\{(0,0)\} ~:~ \varphi'_{u,v}(t)=0,~\varphi''_{u,v}(1)>0\};$$
	$$\mathscr N^-_{\la,\mu}:=\{(u,v)\in \mathscr N_{\la,\mu} ~:~\varphi''_{u,v}(1)<0\}=\{(tu,tv)\in E\setminus\{(0,0)\} ~:~ \varphi'_{u,v}(t)=0,~\varphi''_{u,v}(1)<0\};$$
	$$\mathscr N^0_{\la,\mu}:=\{(u,v)\in \mathscr N_{\la,\mu} ~:~\varphi''_{u,v}(1)=0\}=\{(tu,tv)\in E\setminus\{(0,0)\} ~:~ \varphi'_{u,v}(t)=0,~\varphi''_{u,v}(1)=0\}.$$ }
Hence for any $(u,v)\in \mathscr N_{\la,\mu},$ from \eqref{1.0}, \eqref{1.2} and \eqref{1.3}, we deduce
{\small\begin{align}\label{00}
\varphi''_{u,v}(1) &=\int_{\RR^N\times\RR^N}p(x,y)\bigg\{\frac{|
	u(x)-u(y)|^{p(x,y)}}{|
	x-y|^{N+sp(x,y)}}+\frac{|
	v(x)-v(y)|^{p(x,y)}}{|
	x-y|^{N+sp(x,y)}}\bigg\}dxdy\nonumber\\ &-\int_{\Omega}q(x)\Big(\la a(x)| u|^{q(x)}+
\mu b(x)| v|^{q(x)}\Big) dx \nonumber\\ &-\int_{\Omega}(\alpha(x)+\beta(x))c(x)| u|^{\alpha(x)}| v| ^{\beta(x)} dx.
\end{align}}
\noindent For a given pair of functions $(u,v)\in E,$ we set {\small$${P}(u,v):=\DD\int_{\RR^N\times\RR^N}\bigg\{\frac{|
	u(x)-u(y)|^{p(x,y)}}{|
	x-y|^{N+sp(x,y)}}+\frac{|
	v(x)-v(y)|^{p(x,y)}}{|
	x-y|^{N+sp(x,y)}}\bigg\}dxdy,$$ $${Q}(u,v):=\DD\int_{\Omega}\Big(\la a(x)| u|^{q(x)}+
\mu b(x)| v|^{q(x)}\Big) dx$$ and $${R}(u,v):=\DD\int_{\Omega}c(x)| u|^{\alpha(x)}| v| ^{\beta(x)} dx.$$ }
In the next lemma we obtain some estimations on $P, Q$ and $R.$
\begin{lemma}\label{lem4} Let $(u,v)\in E.$ Then we have the followings:
	\begin{itemize}
		\item [$(i).$]  $ \begin{cases}
		\|(u,v)\|^{p^+},& \|(u,v)\|<1 \\ 
		\|(u,v)\|^{p^-},& \|(u,v)\|>1 
		\end{cases}\leq {P}(u,v) \leq\begin{cases}
		2\|(u,v)\|^{p^-},& \|(u,v)\|<1 \\ 
		2\|(u,v)\|^{p^+},& \|(u,v)\|>1 .
		\end{cases}$
		\item [$(ii).$] There exists a constant $C_1=C_1(N,s,p,q,\alpha,\beta,a,b,\Omega)>0$ such that{\small $${Q}(u,v)\leq C_1(\la+\mu) \max\{\|(u,v)\|^{q^-},~\|(u,v)\|^{q^+}\}.$$}
		\item [$(iii).$] There exists a constant $C_2=C_2(N,s,p,\alpha,\beta,c,\Omega)>1$ such that {\small$$ {R}(u,v)\leq C_2 \max\{\|(u,v)\|^{r^-},~\|(u,v)\|^{r^+}\}.$$}		
	\end{itemize}	
\end{lemma}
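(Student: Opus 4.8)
The plan is to prove each of the three estimates separately, in each case reducing from the modular expressions to the norm $\|(u,v)\|$ via the comparison lemmas already established (Lemma \ref{lem 3.1} for $\rho_{X_0}$, Lemma \ref{prp1} for the Lebesgue modular), together with the H\"older inequality (Lemma \ref{holder}) and the compact embedding Theorem \ref{prp 3.3}, and finally unpacking the norm $\|u\|_{X_0}$ in terms of the modular $\rho_{X_0}(u)$ once more at the end.

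For part $(i)$, I would write ${P}(u,v)=\rho_{X_0}(u)+\rho_{X_0}(v)$ and apply Lemma \ref{lem 3.1} to each summand. The subtlety is that $\|(u,v)\|=\max\{\|u\|_{X_0},\|v\|_{X_0}\}$, so one of $\|u\|_{X_0},\|v\|_{X_0}$ equals $\|(u,v)\|$ while the other may be smaller or larger than $1$ independently; I would split into the cases according to whether $\|u\|_{X_0},\|v\|_{X_0}$ are below or above $1$, bound each $\rho_{X_0}$ between the appropriate powers $\|\cdot\|_{X_0}^{p^\pm}$, and then use $0\le\|u\|_{X_0},\|v\|_{X_0}\le\|(u,v)\|$ together with monotonicity of $t\mapsto t^{p^\pm}$ to collapse everything into the stated two-sided bound (the factor $2$ on the upper side coming from summing the two contributions). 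When $\|(u,v)\|<1$ the smallest exponent gives the largest power, hence the lower bound uses $p^+$ and the upper bound uses $p^-$; the reverse holds when $\|(u,v)\|>1$.

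For part $(ii)$, I would estimate ${Q}(u,v)=\lambda\int_\Omega a|u|^{q(x)}+\mu\int_\Omega b|v|^{q(x)}$ by applying H\"older's inequality (Lemma \ref{holder}) with the conjugate pair $q_*(x)=\frac{\alpha+\beta}{\alpha+\beta-q}$ and its conjugate $\frac{\alpha+\beta}{q}$: this gives $\int_\Omega a|u|^{q(x)}\le C\|a\|_{L^{q_*(x)}(\Omega)}\big\||u|^{q(\cdot)}\big\|_{L^{(\alpha+\beta)/q(x)}(\Omega)}$, which is finite by $(A3)$ and $(A1)$ (note $1<q^-\le q^+<\alpha^-+\beta^-$ ensures the exponent $(\alpha+\beta)/q\ge 1$). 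Then Lemma \ref{lemA1} with $\nu_1=q$, $\nu_2=(\alpha+\beta)/q$ bounds $\big\||u|^{q(\cdot)}\big\|_{L^{(\alpha+\beta)/q(x)}}$ by $\|u\|_{L^{\alpha(x)+\beta(x)}(\Omega)}^{q^-}+\|u\|_{L^{\alpha(x)+\beta(x)}(\Omega)}^{q^+}$, and the compact embedding Theorem \ref{prp 3.3} (valid since $\alpha^++\beta^+<p_s^{*-}$ by $(A1)$) replaces the Lebesgue norm by $C\|u\|_{X_0}\le C\|(u,v)\|$. Collecting constants and using $\|u\|_{X_0},\|v\|_{X_0}\le\|(u,v)\|$ yields ${Q}(u,v)\le C_1(\lambda+\mu)\max\{\|(u,v)\|^{q^-},\|(u,v)\|^{q^+}\}$.

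For part $(iii)$, with $r(x):=\alpha(x)+\beta(x)$, I would bound ${R}(u,v)\le\|c\|_{L^\infty(\Omega)}\int_\Omega|u|^{\alpha(x)}|v|^{\beta(x)}$ using $(A4)$, then apply H\"older (Lemma \ref{holder}) with the pointwise-conjugate exponents $\frac{\alpha(x)+\beta(x)}{\alpha(x)}$ and $\frac{\alpha(x)+\beta(x)}{\beta(x)}$ to separate $|u|^{\alpha(x)}$ and $|v|^{\beta(x)}$, followed by Lemma \ref{lemA1} on each factor and the embedding Theorem \ref{prp 3.3} to land on $\|u\|_{X_0}$ and $\|v\|_{X_0}$; combining the resulting powers and bounding by $\|(u,v)\|$ gives the claim, with $C_2>1$ arranged by enlarging the constant. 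The main obstacle throughout is the bookkeeping forced by the variable exponents — every passage between a modular and a norm produces \emph{two} competing powers ($\gamma^-$ and $\gamma^+$) depending on whether the relevant norm is above or below $1$, and since $\|(u,v)\|$ is a max of two independently-scaled norms one must be careful that the case distinctions combine consistently; this is exactly the ``extra careful analysis'' the introduction warns about, but it is routine once the case split is organized. I would present $(i)$ in full case detail and treat $(ii)$, $(iii)$ by the H\"older--Lemma \ref{lemA1}--embedding chain, absorbing all constants into $C_1,C_2$.
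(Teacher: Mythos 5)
Your proposal is correct, and for parts $(i)$ and $(ii)$ it takes essentially the same route as the paper: write $P(u,v)=\rho_{X_0}(u)+\rho_{X_0}(v)$ and run the case analysis on $\|u\|_{X_0},\|v\|_{X_0}$ through Lemma~\ref{lem 3.1} for $(i)$, and use the H\"older--Lemma~\ref{lemA1}--embedding (Theorem~\ref{prp 3.3}) chain with the conjugate pair $q_*(x)$ and $\tfrac{\alpha(x)+\beta(x)}{q(x)}$ for $(ii)$. The one genuine deviation is in $(iii)$: you separate $|u|^{\alpha(x)}|v|^{\beta(x)}$ by H\"older's inequality with the pointwise conjugate exponents $\tfrac{\alpha(x)+\beta(x)}{\alpha(x)}$ and $\tfrac{\alpha(x)+\beta(x)}{\beta(x)}$ and then invoke Lemma~\ref{lemA1} on each factor, whereas the paper applies Young's inequality pointwise,
\[
|u|^{\alpha(x)}|v|^{\beta(x)}\le \frac{\alpha(x)}{\alpha(x)+\beta(x)}|u|^{\alpha(x)+\beta(x)}+\frac{\beta(x)}{\alpha(x)+\beta(x)}|v|^{\alpha(x)+\beta(x)},
\]
and then passes directly from the two resulting modulars to norms. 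Both are sound. The Young route goes straight to the two extreme exponents $\alpha^-+\beta^-$ and $\alpha^++\beta^+$, while your H\"older route produces the product $\bigl(\|u\|^{\alpha^-}_{L^{\alpha+\beta}}+\|u\|^{\alpha^+}_{L^{\alpha+\beta}}\bigr)\bigl(\|v\|^{\beta^-}_{L^{\alpha+\beta}}+\|v\|^{\beta^+}_{L^{\alpha+\beta}}\bigr)$, whose expansion also contains the cross exponents $\alpha^-+\beta^+$ and $\alpha^++\beta^-$; these must then be absorbed into $\max\{\|(u,v)\|^{\alpha^-+\beta^-},\|(u,v)\|^{\alpha^++\beta^+}\}$ (which is fine, since the cross exponents lie between the two extremes, but it costs a slightly larger constant and a further case split on $\|(u,v)\|\gtrless 1$). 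So your variant is a valid, slightly more laborious alternative for $(iii)$; the rest coincides with the paper.
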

\begin{proof}
	$(i.)$ Clearly ${P}(u,v)=\rho_{X_0}(u)+\rho_{X_0}(v).$ Hence, we have 
	\begin{align}\label{1.4}
	\max\{\rho_{X_0}(u),\rho_{X_0}(v)\}\leq P(u,v)\leq 2\max\{\rho_{X_0}(u),\rho_{X_0}(v)\}
	\end{align}
	For $\|(u,v)\|>1,$ there are two cases.\\
	\underline{Case $I$}. $\|u\|_{X_0}>1$  and $\|v\|_{X_0}>1 :$ Then from {Lemma \ref{lem 3.1} }, we have
	\begin{align}\label{1.5}
	\|u\|_{X_0}^{p^-}<\rho_{X_0}(u)<\|u\|_{X_0}^{p^+}~~\text{ and }~~ \|v\|_{X_0}^{p^-}<\rho_{X_0}(v)<\|v\|_{X_0}^{p^+}.
	\end{align}
	Thus from \eqref{1.4} and \eqref{1.5}, we get 
	{\small$${P}(u,v)\leq 2\DD\max \{\|u\|_{X_0}^{p^+},~\|v\|_{X_0}^{p^+}\}=2\|(u,v)\|^{p^+}; 
		{P}(u,v)\geq \DD\max \{\|u\|_{X_0}^{p^-},~\|v\|_{X_0}^{p^-}\}=\|(u,v)\|^{p^-}.$$}
	\underline{Case $II$}. Without loss of generality, let $\|v\|_{X_0}<1<\|u\|_{X_0}$: Then we have $\|(u,v)\|=\|u\|_{X_0}.$ 
	Now from { Lemma \ref{lem 3.1}}, we get
	\begin{align}\label{1.6}
	\|u\|_{X_0}^{p^-}<\rho_{X_0}(u)<\|u\|_{X_0}^{p^+}~~ and ~~ \|v\|_{X_0}^{p^+}<\rho_{X_0}(v)<\|v\|_{X_0}^{p^-}.
	\end{align}
	From \eqref{1.4} and \eqref{1.6}, we have 
	{\small$${P}(u,v)\leq 2\max \{\|u\|_{X_0}^{p^+}, \|v\|_{X_0}^{p^+}\}=2\|(u,v)\|^{p^+}\text{~and~} {P}(u,v)\geq \max \{\|u\|_{X_0}^{p^-}, \|v\|_{X_0}^{p^-}\}=\|(u,v)\|^{p^-}.$$}
	Again for $\|(u,v)\|<1$, we have $\|u\|_{X_0}<1~ and~ \|v\|_{X_0}<1.$ Applying { Lemma \ref{lem 3.1} },
	we obtain 
	\begin{align}\label{1.7}
	\|u\|_{X_0}^{p^+}<\rho_{X_0}(u)<\|u\|_{X_0}^{p^-}~~ and ~~ \|v\|_{X_0}^{p^+}<\rho_{X_0}(v)<\|v\|_{X_0}^{p^-}.
	\end{align}
	Hence using \eqref{1.4} and \eqref{1.7}, we deduce
	{\small$${P}(u,v)\leq 2\max \{\|u\|_{X_0}^{p^-}, \|v\|_{X_0}^{p^-}\}=2\|(u,v)\|^{p^-}; {P}(u,v)\geq \max \{\|u\|_{X_0}^{p^+}, \|v\|_{X_0}^{p^+}\}=\|(u,v)\|^{p^+}. $$}
	Thus we get $(i).$\\
	$(ii).$ Using H\"{o}lder's inequality {(Lemma \ref{holder}), Sobolev-type embedding (Lemma \ref{prp 3.3})} and Lemma \ref{lemA1}, we have
	{\small\begin{align*}
		{Q}(u,v)&=\DD\int_{\Omega}\Big(\la a(x)| u|^{q(x)}+
		\mu b(x)| v|^{q(x)}\Big) dx\nonumber\\
		&\leq2\la \|a\|_{L^{q_*(x)}(\Omega)}\||u|^{q(\cdot)}\|_{L^{\frac{\alpha(x)+\beta(x)}{q(x)}}(\Omega)}+2\mu\|b\|_{L^{q_*(x)}(\Omega)}\||v|^{q(\cdot)}\|_{L^{\frac{\alpha(x)+\beta(x)}{q(x)}}(\Omega)}\nonumber\\
		&\leq 2\la \|a\|_{L^{q_*(x)}(\Omega)} \Big\{\|u\|_{L^{\alpha(x)+\beta(x)}(\Omega)}^{q^-}+\|u\|_{L^{\alpha(x)+\beta(x)}(\Omega)}^{q^+}\Big\}\nonumber\\
		&+2\mu \|b\|_{L^{q_*(x)}(\Omega)} \Big\{\|v\|_{L^{\alpha(x)+\beta(x)}(\Omega)}^{q^-}+\|v\|_{L^{\alpha(x)+\beta(x)}(\Omega)}^{q^+}\Big\}\nonumber\\
		&\leq K_1\Big[ \la \Big\{\|u\|_{X_0}^{q^-}+\|u\|_{X_0}^{q^+}\Big\}
		+\mu  \Big\{\|v\|_{X_0}^{q^-}+\|v\|_{X_0}^{q^+}\Big\}\Big]\nonumber\\
		&\leq C_1(\la+\mu) \max \Big\{ \|u\|_{X_0}^{q^-}, \|u\|_{X_0}^{q^+},~ \|v\|_{X_0}^{q^-},~ \|v\|_{X_0}^{q^+}\Big\}\nonumber\\
		&= C_1(\la+\mu) \max \Big\{\max \Big\{\|u\|_{X_0}^{q^-},~ \|v\|_{X_0}^{q^-}\Big\},~ \max \Big\{\|u\|_{X_0}^{q^+},~ \|v\|_{X_0}^{q^+}\Big\}\Big\}\nonumber\\
		&= C_1(\la+\mu) \max \Big\{\|(u,v)\|^{q^-},~ \|(u,v)\|^{q^+}\Big\}, \text{~where}
		\end{align*}}
	{\small  $K_1=2\Big\{\|a\|_{L^{q_*(x)}(\Omega)}+ \|b\|_{L^{q_*(x)}(\Omega)}\Big\}.$
		$\max\Big\{\Big(C( N,s,p,\alpha,\beta,\Omega)\Big)^{q^-}, \Big(C( N,s,p,\alpha,\beta,\Omega)\Big)^{q^+}\Big\}$} \\and $C_1=4K_1.$\\
	$(iii).$ Using Young's inequality, Lemma \ref{lemA1} and Lemma \ref{prp 3.3}, we have
		{\small\begin{align*}
 R(u,v)&=\DD\int_{\Omega}c(x)| u|^{\alpha(x)}| v| ^{\beta(x)} dx\nonumber\\
	&\leq \|c\|_{L^{\infty}(\Omega)}\DD\int_{\Omega}| u|^{\alpha(x)}| v| ^{\beta(x)} dx\nonumber\\
	&\leq\|c\|_{L^{\infty}(\Omega)} \DD\int_{\Omega}\Big\{ \frac{\alpha(x)}{\alpha(x)+\beta(x)}|u|^{\alpha(x)+\beta(x)}+\frac{\beta(x)}{\alpha(x)+\beta(x)}|v|^{\alpha(x)+\beta(x)}\Big\}dx\nonumber\\
	&\leq\|c\|_{L^{\infty}(\Omega)}\Big[\Big\{ \|u\|^{\alpha^++\beta^+}_{L^{\alpha(x)+\beta(x)}(\Omega)} + \|u\|^{\alpha^-+\beta^-}_{L^{\alpha(x)+\beta(x)}(\Omega)}\Big\}\nonumber\\&~~~~~~~~~~~~~~~~~~~~~~~~~~~~ +\Big\{ \|v\|^{\alpha^++\beta^+}_{L^{\alpha(x)+\beta(x)}(\Omega)} + \|v\|^{\alpha^-+\beta^-}_{L^{\alpha(x)+\beta(x)}(\Omega)}\Big\}\Big]\nonumber\\
	&\leq K_2 \Big[\Big\{ \|u\|^{\alpha^++\beta^+}_{X_0} + \|u\|^{\alpha^-+\beta^-}_{X_0}\Big\}+ \Big\{ \|v\|^{\alpha^++\beta^+}_{X_0} + \|v\|^{\alpha^-+\beta^-}_{X_0}\Big\}\Big]\nonumber\\
	&\leq C_2\max \Big\{\|u\|_{X_0}^{\alpha^-+\beta^-},~  \|u\|_{X_0}^{\alpha^++\beta^+}, ~ \|v\|_{X_0}^{\alpha^-+\beta^-},~ \|v\|_{X_0}^{\alpha^++\beta^+}\Big\}\nonumber\\
	&= C_2\max \Big\{\max \Big\{\|u\|_{X_0}^{\alpha^-+\beta^-}, \|v\|_{X_0}^{\alpha^-+\beta^-}\Big\},~ \max \Big\{\|u\|_{X_0}^{\alpha^++\beta^+},~ \|v\|_{X_0}^{\alpha^++\beta^+}\Big\}\Big\}\nonumber\\
	&= C_2\max \Big\{\|(u,v)\|^{\alpha^-+\beta^-},~ \|(u,v)\|^{\alpha^++\beta^+}\Big\},
	\end{align*}}
	where $K_2=\|c\|_{L^{\infty}(\Omega)}.\max\Big\{\Big(C( N,s,p,\alpha,\beta,\Omega)\Big)^{\alpha^-+\beta^-}, \Big(C( N,s,p,\alpha,\beta,\Omega)\Big)^{\alpha^++\beta^+}\Big\}$ and\\ $C_2=4K_2+1.$
\end{proof} 
Now we have the following lemma.
\begin{lemma}\label {lem1}
	Let $(u^*,v^*)\in \mathscr N_{\la,\mu}^+$ ( or $\in \mathscr N_{\la,\mu}^-)$ be a local minimizer of $J_{\la,\mu}$ on $\mathscr N_{\la,\mu}^+$ (or on $\mathscr N_{\la,\mu}^-)$. Then $(u^*,v^*)$ is 
	a critical point of $J_{\la,\mu}.$
\end{lemma}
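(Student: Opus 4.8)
The plan is a Lagrange-multiplier argument adapted to the non-homogeneous setting. Set $G\colon E\to\RR$,
$$G(u,v):=\langle J'_{\la,\mu}(u,v),(u,v)\rangle={P}(u,v)-{Q}(u,v)-{R}(u,v),$$
so that, by \eqref{1.2} evaluated at $t=1$, $G(u,v)=\varphi'_{u,v}(1)$ and $\mathscr N_{\la,\mu}=\{(u,v)\in E\setminus\{(0,0)\}:G(u,v)=0\}$. Since ${P}(u,v)=\rho_{X_0}(u)+\rho_{X_0}(v)$ with $\rho_{X_0}\in C^1(X_0,\RR)$, and ${Q},{R}\in C^1(E,\RR)$ by the compact subcritical embeddings of Theorem \ref{prp 3.3} (this is where the growth restrictions in $(A1)$ enter), we have $G\in C^1(E,\RR)$. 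In particular $\mathscr N_{\la,\mu}$ is a $C^1$-manifold near any point at which $G'\neq 0$.

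First I would compute $\langle G'(u,v),(u,v)\rangle$ for $(u,v)\in\mathscr N_{\la,\mu}$. From $G(tu,tv)=t\,\varphi'_{u,v}(t)$, differentiating in $t$ at $t=1$ gives $\langle G'(u,v),(u,v)\rangle=\varphi'_{u,v}(1)+\varphi''_{u,v}(1)=\varphi''_{u,v}(1)$, because $\varphi'_{u,v}(1)=0$ on $\mathscr N_{\la,\mu}$. Consequently, if $(u^*,v^*)\in\mathscr N^+_{\la,\mu}$ (resp. $\mathscr N^-_{\la,\mu}$), then $\langle G'(u^*,v^*),(u^*,v^*)\rangle=\varphi''_{u^*,v^*}(1)>0$ (resp. $<0$); in either case $G'(u^*,v^*)\neq 0$ in $E^*$.

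Next, I note that $\mathscr N^\pm_{\la,\mu}$ is relatively open in $\mathscr N_{\la,\mu}$ (the sign of the continuous function $(u,v)\mapsto\varphi''_{u,v}(1)$ is preserved under small perturbations along $\mathscr N_{\la,\mu}$), so a local minimizer of $J_{\la,\mu}$ on $\mathscr N^\pm_{\la,\mu}$ is a local minimizer of $J_{\la,\mu}$ on $\mathscr N_{\la,\mu}$, and moreover $(u^*,v^*)\neq(0,0)$. Applying the Lagrange multiplier theorem to the $C^1$ constraint $G=0$ at $(u^*,v^*)$, where $G'(u^*,v^*)\neq 0$, yields $\theta\in\RR$ with $J'_{\la,\mu}(u^*,v^*)=\theta\,G'(u^*,v^*)$ in $E^*$. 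Testing this against $(u^*,v^*)$: the left-hand side equals $\langle J'_{\la,\mu}(u^*,v^*),(u^*,v^*)\rangle=0$ since $(u^*,v^*)\in\mathscr N_{\la,\mu}$, while the right-hand side equals $\theta\,\varphi''_{u^*,v^*}(1)$. Since $\varphi''_{u^*,v^*}(1)\neq 0$, it follows that $\theta=0$, hence $J'_{\la,\mu}(u^*,v^*)=0$, i.e. $(u^*,v^*)$ is a critical point of $J_{\la,\mu}$.

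The only delicate points — and the reason this is not completely immediate with variable exponents — are: (i) that $G$ is genuinely $C^1$, which must be obtained from the $C^1$-regularity of the modular $\rho_{X_0}$ and of the lower-order terms, and \emph{not} from a $C^2$-property of $J_{\la,\mu}$, which in general fails when $p^-<2$; and (ii) the identity $\langle G'(u,v),(u,v)\rangle=\varphi'_{u,v}(1)+\varphi''_{u,v}(1)$, which here rests on the scaling relation $G(tu,tv)=t\,\varphi'_{u,v}(t)$ rather than on homogeneity of the functional, so it survives the loss of homogeneity caused by the variable exponents. The remainder is the standard Nehari/fibering-map bookkeeping.
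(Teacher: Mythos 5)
Your argument is correct and follows essentially the same route as the paper: both define the constraint functional $G=I_{\la,\mu}=\langle J'_{\la,\mu}(\cdot,\cdot),(\cdot,\cdot)\rangle$, apply the Lagrange multiplier theorem to obtain $J'_{\la,\mu}(u^*,v^*)=\theta\,G'(u^*,v^*)$, and use the identity $\langle G'(u^*,v^*),(u^*,v^*)\rangle=\varphi''_{u^*,v^*}(1)\neq 0$ on $\mathscr N^\pm_{\la,\mu}$ to force $\theta=0$. Your write-up is slightly more careful than the paper's in explicitly verifying the Lagrange-multiplier hypotheses (that $G\in C^1$, that $G'(u^*,v^*)\neq 0$, and that $\mathscr N^\pm_{\la,\mu}$ is relatively open in $\mathscr N_{\la,\mu}$), and in deriving the key identity from the scaling relation $G(tu,tv)=t\,\varphi'_{u,v}(t)$ rather than leaving it implicit.
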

\begin{proof}
	First assume that $(u^*,v^*)\in \mathscr N_{\la,\mu}^+$ is a local minimizer of $J_{\la,\mu}$ on $\mathscr N_{\la,\mu}^+$.
	Let $I_{\la,\mu}(u,v) = \langle J'_{\la,\mu}(u,v),(u,v)\rangle$. 
	Note that for $(u,v)\in E\setminus\{0\}$ with $I_{\la,\mu}(u,v)=0$, we have $\varphi''_{u,v}(1)>0$ if and only if 
	$\langle I'_{\la,\mu}(u,v),(u,v)\rangle>0$.
	Now as $(u^*,v^*)$ is a local minimizer of $J_{\la,\mu}$  on $\mathscr N_{\la,\mu}^+$, 
	using Lagrange's multiplier theorem we get a real number 
	$\tau$ such that $$J'_{\la,\mu}(u^*,v^*) =\tau I'_{\la,\mu}(u^*,v^*).$$ 
	Therefore $$0=\langle J'_{\la,\mu}(u^*,v^*),(u^*,v^*)\rangle=\tau \langle I'_{\la,\mu}(u^*,v^*),(u^*,v^*)\rangle=\tau \phi''_{(u^*,v^*)}(1) .$$
	As $(u^*,v^*)\in \mathscr N_{\la,\mu}^+ $, we get $\phi''_{(u^*,v^*)}(1)>0$ and hence $\tau= 0.$ This completes the proof.
	Similarly we can prove the result  when $(u^*,v^*)\in \mathscr N_{\la,\mu}^-$ is
	a local minimizer of $J_{\la,\mu}$ on $\mathscr N_{\la,\mu}^-$.
\end{proof}
\begin{lemma}\label{lem5} 
	There exists $\delta>0,$ given by $$\delta=\frac{1}{C_1}\bigg( \frac{\alpha^-+\beta^--p^+}
	{\alpha^-+\beta^--q^-}\bigg)\bigg( \frac{p^--q^+}{C_2(\alpha^++\beta^+-q^+)}\bigg)^{\DD\frac{p^+-q^-}{\alpha^-+\beta^--p^+}}$$ 
	such that for any pair of $(\la,\mu)\in \RR^+\times \RR^+$ with $\la+\mu<\delta,$ we have $\mathscr N_{\la,\mu}^0=\emptyset,$ 
	where the positive constants $C_1,C_2$ are as given  in Lemma \ref{lem4}.
\end{lemma}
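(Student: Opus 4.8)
The plan is to argue by contradiction: suppose there is a pair $(\la,\mu)$ with $\la+\mu<\delta$ but $\mathscr N_{\la,\mu}^0\neq\emptyset$, and derive a contradiction to the definition of $\delta$. So fix $(u,v)\in\mathscr N_{\la,\mu}^0$. Then both $\varphi'_{u,v}(1)=0$ (membership in $\mathscr N_{\la,\mu}$) and $\varphi''_{u,v}(1)=0$ hold, which in the notation $P=P(u,v)$, $Q=Q(u,v)$, $R=R(u,v)$ reads, after bounding the variable exponents appearing under the integral signs by their infima/suprema in the appropriate direction,
\begin{align*}
\varphi'_{u,v}(1)=0:&\quad P-Q-R=0,\\
\varphi''_{u,v}(1)=0:&\quad \text{a weighted version of } p(x,y)P-q(x)Q-(\alpha+\beta)R=0.
\end{align*}
From the first identity I get $R=P-Q$ and $Q=P-R$. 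The second identity, together with the pointwise bounds $p^-\le p(x,y)\le p^+$, $q^-\le q(x)\le q^+$ and $\alpha^-+\beta^-\le\alpha(x)+\beta(x)\le\alpha^++\beta^+$ inside the integrals in \eqref{00}, yields two one-sided inequalities: comparing the $P$ and $R$ terms (eliminating $Q$ via $Q=P-R$) gives a lower bound of the form $R\ge c_1 P$ with $c_1=\frac{p^--q^+}{\alpha^++\beta^+-q^+}$, and comparing the $P$ and $Q$ terms (eliminating $R$ via $R=P-Q$) gives $Q\ge c_2 P$ with $c_2 = \frac{\alpha^-+\beta^--p^+}{\alpha^-+\beta^--q^-}$; here I am using hypothesis $(A1)$ to guarantee the relevant differences of exponents are strictly positive so the signs go the right way.

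Next I combine these two lower bounds with the estimates of Lemma \ref{lem4}. Without loss of generality I may normalize/distinguish the case $\|(u,v)\|\ge 1$ versus $\|(u,v)\|<1$; let me describe the former (the latter is symmetric after swapping the roles of the exponents in the max). On one hand, from Lemma \ref{lem4}$(i)$ and $R\ge c_1 P$ I get $C_2\|(u,v)\|^{\alpha^++\beta^+}\ge R\ge c_1 P\ge c_1\|(u,v)\|^{p^-}$, hence a lower bound on the norm: $\|(u,v)\|\ge\big(\frac{c_1}{C_2}\big)^{1/(\alpha^++\beta^+-p^-)}$ — actually with the exponent arranged as in the statement, $\big(\frac{p^--q^+}{C_2(\alpha^++\beta^+-q^+)}\big)^{1/(\alpha^-+\beta^--p^+)}$ after bookkeeping the correct exponent differences. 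On the other hand, from $Q\ge c_2 P$ and Lemma \ref{lem4}$(i),(ii)$, $C_1(\la+\mu)\|(u,v)\|^{q^+}\ge Q\ge c_2 P\ge c_2\|(u,v)\|^{p^-}$, which gives an \emph{upper} bound $\|(u,v)\|^{p^--q^+}\le \frac{C_1(\la+\mu)}{c_2}$, i.e. $\|(u,v)\|\le\big(\frac{C_1(\la+\mu)}{c_2}\big)^{1/(p^--q^+)}$. Feeding the lower bound on $\|(u,v)\|$ into this upper bound, solving for $\la+\mu$, and carefully keeping track of which exponent bound ($q^+$ vs $q^-$, $\alpha^++\beta^+$ vs $\alpha^-+\beta^-$) is the valid one in each inequality, produces exactly $\la+\mu\ge\delta$ with $\delta$ as in the statement, contradicting $\la+\mu<\delta$.

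The main obstacle — and the reason the statement of $\delta$ looks so intricate — is precisely this bookkeeping of variable exponents: unlike the constant-exponent case, the terms $t^{p(x,y)}$, $t^{q(x)}$, $t^{\alpha(x)+\beta(x)}$ cannot be pulled out of the integrals, so in passing from \eqref{1.0} and \eqref{00} to clean scalar inequalities $P,Q,R$ one must replace each variable exponent by the $\inf$ or $\sup$ that makes the inequality go in the needed direction, and one must check this is consistent with the later use of Lemma \ref{lem4} (whose bounds themselves involve both $q^-$ and $q^+$, both $\alpha^-+\beta^-$ and $\alpha^++\beta^+$, and the $\max$ over norm $\lessgtr 1$). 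I expect the cleanest route is: first derive $R\ge\frac{p^--q^+}{\alpha^++\beta^+-q^+}P$ and $Q\ge\frac{\alpha^-+\beta^--p^+}{\alpha^-+\beta^--q^-}P$ as the two key ``structural'' inequalities valid for every $(u,v)\in\mathscr N^0_{\la,\mu}$, then note $P>0$ (else $(u,v)=(0,0)$, excluded), divide through, and only at the end invoke Lemma \ref{lem4} to convert into an inequality purely in $\|(u,v)\|$ and $\la+\mu$; hypothesis $(A2)$ is what ensures the resulting exponent on $(\la+\mu)$ is positive so that the bound on $\la+\mu$ is nonvacuous. Handling the two norm regimes $\|(u,v)\|\ge1$ and $\|(u,v)\|<1$ uniformly (so that a single $\delta$ works) is the last careful point.
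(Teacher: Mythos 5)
Your proposal follows the same route as the paper: argue by contradiction, substitute $\varphi'_{u,v}(1)=0$ into the two one-sided bounds for $\varphi''_{u,v}(1)=0$ to extract precisely $Q\geq\frac{\alpha^-+\beta^--p^+}{\alpha^-+\beta^--q^-}P$ and $R\geq\frac{p^--q^+}{\alpha^++\beta^+-q^+}P$, convert these into norm estimates via Lemma \ref{lem4}, split into $\|(u,v)\|\lessgtr1$, and derive $\la+\mu\geq\delta$. One small correction: you invoke $(A2)$ to guarantee the exponent on $\la+\mu$ is positive, but in fact $(A1)$ alone gives all the needed sign conditions here (and the paper's proof of this lemma does not use $(A2)$, which is only needed later for $\theta^+_{\la,\mu}<0$); also, in the $\|(u,v)\|>1$ regime the ``lower bound on the norm'' you derive is vacuous since the base is $<1$, and the paper instead closes that case by observing the resulting threshold is at least as large as $\delta$ because the base is $<1$ while $\frac{p^--q^+}{\alpha^++\beta^+-p^-}<\frac{p^+-q^-}{\alpha^-+\beta^--p^+}$.
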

\begin{proof}
	We prove this lemma by contradiction. Let us assume that  there exist $\la,\mu>0$ with $\la+\mu<\delta$ such that
	$\mathscr N_{\la,\mu}^0\not=\emptyset.$ Hence there is  $(u,v)\in \mathscr N_{\la,\mu}^0. $ 
	Now if $\|(u,v)\|<1,$ 
	then from \eqref{1.0} using \eqref{00} and Lemma \ref{lem4} $(i), (ii)$, we obtain 
	\begin{align*}
	0=\varphi''_{(u,v)}(1)
	&\leq p^+ P(u,v)-q^-Q(u,v)-(\alpha^-+\beta^-)R(u,v)\\
	&= (p^+-(\alpha^-+\beta^-)) P(u,v)
	+ (\alpha^-+\beta^--q^-)Q(u,v) \\
	&\leq (p^+-(\alpha^-+\beta^-)) \|(u,v)\|^{p^+} + (\alpha^-+\beta^--q^-) C_1 (\la+\mu) \|(u,v)\|^{q^-}.
	\end{align*}
	This implies
	\begin{align}\label{1.8}
	\|(u,v)\|^{p^+-q^-}\leq \frac{(\alpha^-+\beta^--q^-)}{(\alpha^-+\beta^--p^+)} C_1 (\la+\mu).
	\end{align}
	Again from \eqref{1.0}, using \eqref{00} and Lemma \ref{lem4} $(i), (iii)$, we deduce that
	\begin{align*}
	0=\varphi''_{(u,v)}(1)
	&\geq p^- P(u,v) -q^+Q(u,v)-(\alpha^++\beta^+)R(u,v)\nonumber\\
	&=(p^--q^+) P(u,v) 
	- (\alpha^++\beta^+-q^+)R(u,v)\nonumber\\
	&\geq (p^--q^+) \|(u,v)\|^{p^+} - (\alpha^++\beta^+-q^+) C_2~ \|(u,v)\|^{\alpha^-+\beta^-}.
	\end{align*}
	This gives
	\begin{align}\label{1.9}
	1\geq\|(u,v)\|^{\alpha^-+\beta^--p^+}\geq \frac{(p^--q^+)}{C_2(\alpha^++\beta^+-q^+)} .
	\end{align}
	Combining \eqref{1.8} and \eqref{1.9}, we get 
	$$\la+\mu\geq \frac{1}{C_1}\bigg( \frac{\alpha^-+\beta^--p^+}{\alpha^-+\beta^--q^-}\bigg)
	\bigg( \frac{p^--q^+}{C_2(\alpha^++\beta^+-q^+)}\bigg)^{\DD\frac{p^+-q^-}{\alpha^-+\beta^--p^+}},$$
	which is a contradiction.	
	Now, if $ \|(u,v)\|>1,$ again using \eqref{1.0}, \eqref{00} and Lemma \ref{lem4} $(i), (ii)$, we obtain
	$$0=\varphi''_{u,v}(1)\leq(p^+-(\alpha^-+\beta^-)) \|(u,v)\|^{p^-} + (\alpha^-+\beta^--q^-) C_1 (\la+\mu) \|(u,v)\|^{q^+} ,
	$$
	that is, 
	\begin{align}\label{1.10}
	\|(u,v)\|^{p^--q^+}\leq \frac{(\alpha^-+\beta^--q^-)}{(\alpha^-+\beta^--p^+)} C_1 (\la+\mu).
	\end{align}
	On the other hand from \eqref{1.0}, \eqref{00} and Lemma \ref{lem4} $(i), (iii),$ we find 
	$$0=\varphi''_{u,v}(1)\geq (p^--q^+) \|(u,v)\|^{p^-} - (\alpha^++\beta^+-q^+) C_2~ \|(u,v)\|^{\alpha^++\beta^+} ,$$
	that is,
	\begin{align}\label{1.11}
	\|(u,v)\|^{\alpha^++\beta^+-p^-}\geq \frac{(p^--q^+)}{C_2(\alpha^++\beta^+-q^+)} .
	\end{align} 
	Thus combining \eqref{1.10} and \eqref{1.11},
	\begin{align}\label{55}
	\la+\mu\geq \frac{1}{C_1}\bigg( \frac{\alpha^-+\beta^--p^+}{\alpha^-+\beta^--q^-}\bigg)\bigg( \frac{p^--q^+}{C_2(\alpha^++\beta^+-q^+)}\bigg)^{\DD\frac{p^--q^+}{\alpha^++\beta^+-p^-}}.
	\end{align}
	Since $0<\DD\bigg( \frac{p^--q^+}{C_2(\alpha^++\beta^+-q^+)}\bigg)<1$ and also $\DD\frac{p^--q^+}{\alpha^++\beta^+-p^-}<\frac{p^+-q^-}{\alpha^-+\beta^--p^+},$
	from \eqref{55} we finally get 
{\small	$$\la+\mu\geq \frac{1}{C_1}\bigg( \frac{\alpha^-+\beta^--p^+}{\alpha^-+\beta^--q^-}\bigg)\bigg( \frac{p^--q^+}{C_2(\alpha^++\beta^+-q^+)}\bigg)^{\DD\frac{p^+-q^-}{\alpha^-+\beta^--p^+}},$$}
	which is a contradiction.
	Hence the lemma is proved. 
\end{proof}
In the next result, we discuss the behavior of the functional $J_{\la,\mu}$ on   $\mathscr N_{\la,\mu}.$
\begin{lemma}\label{lem6}
	For $\la+\mu<\delta$, $J_{\la,\mu}$ is coercive and bounded below on $\mathscr N_{\la,\mu}.$
\end{lemma}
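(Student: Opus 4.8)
The plan is to show that $J_{\la,\mu}(u,v)$ is bounded below by a function of $\|(u,v)\|$ alone, valid for every $(u,v)\in\mathscr N_{\la,\mu}$, and that this lower-bounding function tends to $+\infty$ as $\|(u,v)\|\to\infty$. The natural first step is to eliminate the highest-order term $R(u,v)$ using the Nehari constraint \eqref{1.0}. Since $(u,v)\in\mathscr N_{\la,\mu}$, we have $R(u,v)=P(u,v)-Q(u,v)$, so substituting into \eqref{energy} and using the bounds $\frac1{p(x,y)}\le\frac1{p^-}$, $\frac1{q(x)}\ge\frac1{q^+}$, and $\frac1{\alpha(x)+\beta(x)}\ge\frac1{\alpha^++\beta^+}$ on the respective integrands, I would obtain an estimate of the shape
\begin{align*}
J_{\la,\mu}(u,v)&\ge\Big(\frac1{p^+}-\frac1{\alpha^-+\beta^-}\Big)P(u,v)-\Big(\frac1{q^+}-\frac1{\alpha^-+\beta^-}\Big)Q(u,v),
\end{align*}
where the coefficient of $P(u,v)$ is positive by $(A1)$ (since $p^+<\alpha^-+\beta^-$) and the coefficient of $Q(u,v)$ is also positive since $q^+<p^+<\alpha^-+\beta^-$. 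One has to be a little careful about which representative inequalities to use depending on whether $R(u,v)$, $Q(u,v)$, etc.\ are $\ge1$ or $<1$; the cleanest route is to bound the reciprocals of the variable exponents uniformly as above so the algebra closes regardless.

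Next I would insert the estimates from Lemma \ref{lem4}. From part $(i)$, $P(u,v)\ge\|(u,v)\|^{p^-}$ when $\|(u,v)\|>1$ (and $\ge\|(u,v)\|^{p^+}$ when $\|(u,v)\|<1$), and from part $(ii)$, $Q(u,v)\le C_1(\la+\mu)\max\{\|(u,v)\|^{q^-},\|(u,v)\|^{q^+}\}$. Plugging these in gives, for $\|(u,v)\|=:t>1$,
\begin{align*}
J_{\la,\mu}(u,v)&\ge\Big(\frac1{p^+}-\frac1{\alpha^-+\beta^-}\Big)t^{p^-}-\Big(\frac1{q^+}-\frac1{\alpha^-+\beta^-}\Big)C_1(\la+\mu)\,t^{q^+}.
\end{align*}
Since $q^+<p^-$ by $(A1)$, the right-hand side is a function of $t$ of the form $A t^{p^-}-B t^{q^+}$ with $A>0$ and $B>0$ fixed (for fixed $\la+\mu<\delta$), which is bounded below on $t\in(0,\infty)$ and tends to $+\infty$ as $t\to\infty$; on the bounded region $\|(u,v)\|\le1$ the analogous inequality with exponents $p^+$ and $q^-$ (or $q^+$) gives an obvious lower bound by a constant. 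Hence $J_{\la,\mu}$ is bounded below on $\mathscr N_{\la,\mu}$, and the coercivity $J_{\la,\mu}(u,v)\to+\infty$ as $\|(u,v)\|\to\infty$ follows from the displayed inequality together with $p^->q^+$.

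The only genuinely delicate point is the bookkeeping with the variable exponents: because $\frac1{p(x,y)}$, $\frac1{q(x)}$ and $\frac1{\alpha(x)+\beta(x)}$ are not constant, one cannot simply factor them out, and the substitution $R=P-Q$ must be combined with sign-correct pointwise bounds on these reciprocals so that the coefficients multiplying $P$ and $Q$ end up with the right (positive) sign. Once those inequalities are set up correctly, the rest is a routine application of Lemma \ref{lem4} and elementary analysis of the one-variable function $t\mapsto At^{p^-}-Bt^{q^+}$. I would also remark that the hypothesis $\la+\mu<\delta$ is used here only to keep $B$ finite and fixed; in fact boundedness below and coercivity hold for every fixed $(\la,\mu)$, but stating it for $\la+\mu<\delta$ suffices for the sequel.
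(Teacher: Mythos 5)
Your overall strategy coincides with the paper's: use the Nehari constraint $R=P-Q$ from \eqref{1.0} to eliminate the highest-order term, replace the variable reciprocals by constants, and reduce via Lemma~\ref{lem4} to the single-variable comparison $At^{p^-}-Bt^{q^+}\to+\infty$ using $q^+<p^-$. However, the three pointwise bounds you state on the reciprocals are all in the \emph{wrong} direction for a lower bound on $J_{\la,\mu}$. To bound $J_{\la,\mu}$ from below you need a lower bound on the positive gradient term and upper bounds on the two subtracted terms, i.e.
\begin{equation*}
\frac{1}{p(x,y)}\geq\frac{1}{p^+},\qquad \frac{1}{q(x)}\leq\frac{1}{q^-},\qquad \frac{1}{\alpha(x)+\beta(x)}\leq\frac{1}{\alpha^-+\beta^-},
\end{equation*}
not $\frac1{p(x,y)}\le\frac1{p^-}$, $\frac1{q(x)}\ge\frac1{q^+}$, $\frac1{\alpha(x)+\beta(x)}\ge\frac1{\alpha^++\beta^+}$ as you wrote. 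This propagates into your displayed estimate: the coefficient of $-Q(u,v)$ should be $\bigl(\tfrac{1}{q^-}-\tfrac{1}{\alpha^-+\beta^-}\bigr)$, as in the paper, not $\bigl(\tfrac{1}{q^+}-\tfrac{1}{\alpha^-+\beta^-}\bigr)$; the latter is a strictly stronger bound that your derivation does not establish. Because both coefficients are positive and the exponent structure is unchanged, the final conclusion (coercivity and boundedness below) still holds once the bounds are corrected, but as written the chain of inequalities is not valid. Your closing remark that $\la+\mu<\delta$ is not actually needed here is correct and is a small refinement the paper does not make.
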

\begin{proof}
	Let $(u,v)\in \mathscr N_{\la,\mu}.$ Then for $\|(u,v)\|>1, $ from \eqref{energy} and \eqref{1.0} and Lemma \ref{lem4} $(ii)$, we obtain
	{\small\begin{align}\label{1.12}
	J_{\la,\mu}(u,v)	&\geq \frac{1}{p^+}P(u,v)- \frac{1}{q^-}Q(u,v)- \frac{1}{\alpha^-+\beta^-}R(u,v)\nonumber\\
	&=\Big(\frac{1}{p^+}-\frac{1}{\alpha^-+\beta^-}\Big)P(u,v)
	-\Big(\frac{1}{q^-}-\frac{1}{\alpha^-+\beta^-}\Big)Q(u,v) \nonumber\\
	&\geq \Big(\frac{1}{p^+}-\frac{1}{\alpha^-+\beta^-}\Big)\|(u,v)\|^{p^-}-C_1(\la+\mu)\Big(\frac{1}{q^-}-\frac{1}{\alpha^-+\beta^-}\Big)\|(u,v)\|^{q^+}.
	\end{align}}
	Since from $(A1)$, we have $1<q^-\leq q^+<p^-\leq p^+<\alpha^-+\beta^-,$  
	we conclude from \eqref{1.12}  that $J_{\la,\mu}(u,v)\ra +\infty$ as $\|(u,v)\|\ra+\infty.$ Therefore $J_{\la,\mu}$
	is coercive and bounded below on $\mathscr N_{\la,\mu}.$
\end{proof}
\begin{lemma}\label{lem*}We have the following results:
	\begin{itemize}
		\item[$(i).$] If $(u,v)\in\mathscr N^+_{\la,\mu},$ then $Q(u,v)>0.$
		\item[$(ii).$]If $(u,v)\in\mathscr N^-_{\la,\mu},$ then $R(u,v)>0.$
	\end{itemize}
\end{lemma}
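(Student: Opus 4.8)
The plan is to exploit the sign structure of the three functionals $P$, $Q$, $R$ together with the two characterizations of $\mathscr N_{\la,\mu}$ membership: the equation $\varphi'_{u,v}(1)=0$, which by \eqref{1.0} reads $P(u,v)=Q(u,v)+R(u,v)$, and the sign of $\varphi''_{u,v}(1)$ from \eqref{00}. The key observation is that $\varphi''_{u,v}(1)$ can be re-expressed in two different ways by eliminating one of the three quantities using the Nehari identity. For part $(i)$, I would eliminate $R(u,v)$; for part $(ii)$, I would eliminate $Q(u,v)$.

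For part $(i)$, suppose $(u,v)\in\mathscr N^+_{\la,\mu}$, so $\varphi''_{u,v}(1)>0$. Starting from \eqref{00} and using the crude bounds $p(x,y)\le p^+$, $q(x)\ge q^-$, $\alpha(x)+\beta(x)\ge \alpha^-+\beta^-$ on the integrands (being careful about the directions of the inequalities, since $P,Q,R\ge 0$), I would bound
\begin{align*}
0<\varphi''_{u,v}(1)\le p^+P(u,v)-q^-Q(u,v)-(\alpha^-+\beta^-)R(u,v).
\end{align*}
Now substitute $R(u,v)=P(u,v)-Q(u,v)$ (the Nehari identity) to get
\begin{align*}
0<\big(p^+-(\alpha^-+\beta^-)\big)P(u,v)+\big(\alpha^-+\beta^--q^-\big)Q(u,v).
\end{align*}
By $(A1)$ we have $p^+<\alpha^-+\beta^-$, so the coefficient of $P(u,v)$ is negative and $P(u,v)\ge 0$; hence the term $(\alpha^-+\beta^--q^-)Q(u,v)$ must be strictly positive. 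Since $\alpha^-+\beta^--q^->0$ (again by $(A1)$), this forces $Q(u,v)>0$.

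For part $(ii)$, suppose $(u,v)\in\mathscr N^-_{\la,\mu}$, so $\varphi''_{u,v}(1)<0$. This time I would bound \eqref{00} from below using $p(x,y)\ge p^-$, $q(x)\le q^+$, $\alpha(x)+\beta(x)\le\alpha^++\beta^+$:
\begin{align*}
0>\varphi''_{u,v}(1)\ge p^-P(u,v)-q^+Q(u,v)-(\alpha^++\beta^+)R(u,v),
\end{align*}
then eliminate $Q(u,v)=P(u,v)-R(u,v)$ to obtain
\begin{align*}
0>(p^--q^+)P(u,v)-(\alpha^++\beta^+-q^+)R(u,v).
\end{align*}
Since $p^->q^+$ by $(A1)$, the first term is nonnegative, so $(\alpha^++\beta^+-q^+)R(u,v)>0$; as $\alpha^++\beta^+-q^+>0$, this gives $R(u,v)>0$.

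I do not anticipate a genuine obstacle here — the only thing requiring care is getting the directions of all the integrand-coefficient inequalities right (upper bounds on $\varphi''_{u,v}(1)$ for $\mathscr N^+$, lower bounds for $\mathscr N^-$), and double-checking that one never divides by zero, which is guaranteed by the strict chain of inequalities in $(A1)$. One might also want to note that $(u,v)\ne(0,0)$ on the Nehari manifold, though it is not actually needed for either conclusion since $P,Q,R\ge 0$ is all that is used. The argument is essentially the standard constant-exponent computation, with the variable exponents handled by replacing each exponent inside an integral by its extremal constant value in the direction dictated by the sign of the corresponding nonnegative functional.
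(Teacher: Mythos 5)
Your proof is correct and coincides with the paper's argument: for each part you bound $\varphi''_{u,v}(1)$ by the appropriate extremal exponents, eliminate one of the three functionals via the Nehari identity $P=Q+R$, and use the sign conditions from $(A1)$. The only cosmetic difference is that the paper finishes $(i)$ by writing $Q(u,v)\geq \frac{\alpha^-+\beta^--p^+}{\alpha^-+\beta^--q^-}P(u,v)>0$ (tacitly using $P(u,v)>0$ since $(u,v)\neq(0,0)$), whereas you argue the weighted sum is strictly positive so $Q$ must be — same content.
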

\begin{proof}
	$(i).$ Since $(u,v)\in\mathscr N^+_{\la,\mu},$ we have $\phi''_{(u,v)}(1)>0.$
	Thus using \eqref{1.0} and \eqref{00}, we obtain
	\begin{align*}
	0<\varphi''_{(u,v)}(1)&\leq p^+ P(u,v)-q^-Q(u,v) -(\alpha^-+\beta^-)R(u,v)\\
	&=  \{p^+-(\alpha^-+\beta^-)\} P(u,v)
	+(\alpha^-+\beta^--q^-)Q(u,v).
	\end{align*}
	This implies that $Q(u,v)
	\geq\DD\frac{(\alpha^-+\beta^--p^+)}{(\alpha^-+\beta^--q^-)}
	P(u,v)>0.$\\
	\noindent$(ii).$ Since $(u,v)\in\mathscr N^-_{\la,\mu},$ we have $\phi''_{(u,v)}(1)<0.$
	Thus using \eqref{1.0} and \eqref{00}, we obtain
	\begin{align*}
	0>\varphi''_{(u,v)}(1)&\geq p^- P(u,v)-q^+Q(u,v)-(\alpha^++\beta^+)R(u,v)\\
	&=( p^--q^+) P(u,v)-(\alpha^++\beta^+-q^+)R(u,v), 
	\end{align*}
	that is, 
	$R(u,v)
	\geq\DD\frac{(\alpha^++\beta^+-p^-)}{(\alpha^++\beta^+-q^+)}P(u,v)>0.$
\end{proof}
\begin{remark} From Lemma \ref{lem5} and Lemma \ref{lem6}, we conclude that for any pair of parameters $(\la,\mu)\in \RR^+\times\RR^+$ 
	with $\la+\mu<\delta,$ $\mathscr N_{\la,\mu}=\mathscr N^-_{\la,\mu}\cup \mathscr N^+_{\la,\mu}$ and $J_{\la,\mu}$
	is coercive and bounded below on $\mathscr N^-_{\la,\mu}$ and $\mathscr N^+_{\la,\mu}.$ Therefore we can define
	$$\theta_{\la,\mu}=\DD\inf_{(u,v)\in \mathscr N_{\la,\mu}}J_{\la,\mu}(u,v);~ \theta^+_{\la,\mu}=
	\DD\inf_{(u,v)\in \mathscr N^+_{\la,\mu}}J_{\la,\mu}(u,v);~\theta^-_{\la,\mu}=\DD\inf_{(u,v)\in \mathscr N^-_{\la,\mu}}J_{\la,\mu}(u,v).$$
\end{remark}
Now we establish some important properties of $\mathscr N^+_{\la,\mu}$ and $\mathscr N^-_{\la,\mu}$ in the next two lemmas, respectively.
\begin{lemma}\label{lem7}
	If $\la+\mu<\delta,$ then $\theta_{\la,\mu}\leq \theta^+_{\la,\mu}<0.$
\end{lemma}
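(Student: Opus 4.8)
The plan is to handle the two inequalities in $\theta_{\la,\mu}\le\theta^+_{\la,\mu}<0$ separately. The first is free: $\mathscr N^+_{\la,\mu}\subseteq\mathscr N_{\la,\mu}$, so $\theta_{\la,\mu}=\inf_{\mathscr N_{\la,\mu}}J_{\la,\mu}\le\inf_{\mathscr N^+_{\la,\mu}}J_{\la,\mu}=\theta^+_{\la,\mu}$. For the second I would show that $J_{\la,\mu}(u,v)<0$ for \emph{every} $(u,v)\in\mathscr N^+_{\la,\mu}$; this gives $\theta^+_{\la,\mu}\le 0$, and strict negativity then follows because $\mathscr N^+_{\la,\mu}$ is nonempty for $\la+\mu<\delta$ (this is where the restriction $\la+\mu<\delta$ enters, through Lemma \ref{lem5} and the fibering-map construction of elements of $\mathscr N^+_{\la,\mu}$), so that $\theta^+_{\la,\mu}\le J_{\la,\mu}(u_0,v_0)<0$ for any fixed $(u_0,v_0)\in\mathscr N^+_{\la,\mu}$.

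To carry out the pointwise estimate, fix $(u,v)\in\mathscr N^+_{\la,\mu}$. I first note $P(u,v)>0$: since $P(u,v)=\rho_{X_0}(u)+\rho_{X_0}(v)$ with both summands nonnegative, $P(u,v)=0$ would force both Gagliardo seminorms to vanish, hence $u$ and $v$ to be a.e.\ constant and therefore $\equiv0$ (they vanish on $\Omega^c$), contradicting $(u,v)\neq(0,0)$. Because $(u,v)\in\mathscr N_{\la,\mu}$, the Nehari identity \eqref{1.0} reads $P(u,v)-Q(u,v)-R(u,v)=0$. Replacing in \eqref{energy} the variable exponents $p(x,y),q(x),\alpha(x)+\beta(x)$ by $p^-,q^+,\alpha^++\beta^+$ from the appropriate side (all three integrands are nonnegative) and then substituting $R(u,v)=P(u,v)-Q(u,v)$, I obtain
\[
J_{\la,\mu}(u,v)\le\frac{1}{p^-}P(u,v)-\frac{1}{q^+}Q(u,v)-\frac{1}{\alpha^++\beta^+}R(u,v)
=\Big(\frac{1}{p^-}-\frac{1}{\alpha^++\beta^+}\Big)P(u,v)-\Big(\frac{1}{q^+}-\frac{1}{\alpha^++\beta^+}\Big)Q(u,v).
\]

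Since $q^+<\alpha^++\beta^+$, the coefficient of $Q(u,v)$ above is minus a positive number, so I can feed in the lower bound $Q(u,v)\ge\frac{(\alpha^-+\beta^-)-p^+}{(\alpha^-+\beta^-)-q^-}P(u,v)$ valid on $\mathscr N^+_{\la,\mu}$ (Lemma \ref{lem*}$(i)$) and get $J_{\la,\mu}(u,v)\le\Gamma\,P(u,v)$ with
\[
\Gamma=\Big(\frac{1}{p^-}-\frac{1}{\alpha^++\beta^+}\Big)-\Big(\frac{1}{q^+}-\frac{1}{\alpha^++\beta^+}\Big)\frac{(\alpha^-+\beta^-)-p^+}{(\alpha^-+\beta^-)-q^-}.
\]
Clearing denominators, $\Gamma<0$ is equivalent to $\frac{(\alpha^++\beta^+)-p^-}{p^-}<\frac{(\alpha^++\beta^+)-q^+}{q^+}\cdot\frac{(\alpha^-+\beta^-)-p^+}{(\alpha^-+\beta^-)-q^-}$, and a direct computation shows that this is precisely hypothesis $(A2)$. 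Hence $\Gamma<0$, and since $P(u,v)>0$ we conclude $J_{\la,\mu}(u,v)\le\Gamma\,P(u,v)<0$, as required.

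The part I expect to be most delicate is not conceptual but the bookkeeping: one must choose, in each of the several places where a variable exponent is bounded by a constant, the right one of $p^{\pm},q^{\pm},\alpha^{\pm}+\beta^{\pm}$ so that the whole chain of inequalities points consistently towards an upper bound for $J_{\la,\mu}$ that is a negative multiple of $P(u,v)$; and then one must verify the algebraic identity that makes the final constant $\Gamma$ negative exactly under $(A2)$. Indeed the somewhat unusual asymmetric form of $(A2)$ is dictated by this very computation, and it is precisely the non-homogeneity of the energy (forcing $p^-\neq p^+$ etc.\ in the estimates) that makes $(A2)$ necessary here, whereas in the constant-exponent case $0<p<\alpha+\beta$ alone would suffice.
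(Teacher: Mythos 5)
Your argument is correct and is, up to bookkeeping, the paper's own proof: where the paper uses $\varphi''_{u,v}(1)>0$ together with the Nehari identity to get the upper bound $R(u,v)<\frac{p^+-q^-}{\alpha^-+\beta^--q^-}P(u,v)$ and then substitutes $Q=P-R$ in the estimate $J_{\la,\mu}\le\frac{1}{p^-}P-\frac{1}{q^+}Q-\frac{1}{\alpha^++\beta^+}R$, you use the dual lower bound $Q(u,v)\ge\frac{\alpha^-+\beta^--p^+}{\alpha^-+\beta^--q^-}P(u,v)$ from Lemma \ref{lem*}$(i)$ and substitute $R=P-Q$, and a short computation shows that the two resulting coefficients of $P(u,v)$ are in fact numerically identical, both reducing to the cross-multiplied form of (A2) as you claim. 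You are also right to flag that the strict inequality $\theta^+_{\la,\mu}<0$ additionally needs $\mathscr N^+_{\la,\mu}\neq\emptyset$, a point the paper leaves implicit.
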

\begin{proof}
	Let $(u,v)\in  \mathscr N^+_{\la,\mu}.$ Then $\varphi''_{u,v}(1)>0.$
	Now combining  \eqref{1.0} and \eqref{00}, we obtain
	{\small\begin{align*}
	0<\varphi''_{u,v}(1)&<p^+P(u,v)- q^-Q(u,v) - (\alpha^-+\beta^-)R(u,v)\\
	&=(p^+-q^-)P(u,v)
	- (\alpha^-+\beta^--q^-)R(u,v),
	\end{align*}}
	that is,
	{\small\begin{align}\label{1.13}
	R(u,v)
	&<\frac{(p^+-q^-)}{(\alpha^-+\beta^--q^-)}P(u,v).
	\end{align}}
	Using \eqref{1.0} and \eqref{1.13}, from \eqref{energy} we deduce
	{\small\begin{align}\label{1.14}
		J_{\la,\mu}(u,v)&\leq \frac{1}{p^-}P(u,v)
		-\frac{1}{q^+}Q(u,v)-\frac{1}{\alpha^++\beta^+}R(u,v)\nonumber\\
		&= \Big(\frac{1}{p^-}-\frac{1}{q^+}\Big)P(u,v)
		+ \Big(\frac{1}{q^+}-\frac{1}{\alpha^++\beta^+}\Big)R(u,v)\nonumber\\
		&\leq \bigg\{ \Big(\frac{1}{p^-}-\frac{1}{q^+}\Big)+ \Big(\frac{1}{q^+}-\frac{1}{\alpha^++\beta^+}\Big) \frac{(p^+-q^-)}{(\alpha^-+\beta^--q^-)}  \bigg\}P(u,v)\nonumber\\
		&=\Bigg\{\frac{(q^+-p^-)(\alpha^++\beta^+)+p^-(\alpha^++\beta^+-q^+)\DD\frac{(p^+-q^-)}{(\alpha^-+\beta^--q^-)}}{p^- q^+ (\alpha^++\beta^+)}\Bigg\}P(u,v).
		\end{align}}
	From $(A2), $ we have ${(q^+-p^-)(\alpha^++\beta^+)+p^-(\alpha^++\beta^+-q^+)\DD\frac{(p^+-q^-)}{(\alpha^-+\beta^--q^-)}}<0.$ 
	Hence \eqref{1.14} implies $J_{\la\mu}(u,v)<0.$ Therefore from the definition of  $\theta_{\la,\mu},$  and 
	$ \theta^+_{\la,\mu},$ it follows that $\theta_{\la,\mu}\leq \theta^+_{\la,\mu}<0.$
\end{proof}
\begin{lemma}\label{lem8}
	If $\la+\mu<\Big(\frac{q^-}{p^+}\Big)\delta,$ then $ \theta^-_{\la,\mu}>K,$ where $K=K(N,s,p,q,\alpha,\beta,a,b,\la,\mu,\Omega)$ is some positive constant.
\end{lemma}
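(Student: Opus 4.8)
The plan is to show that $J_{\la,\mu}$ is bounded below by a positive constant on $\mathscr N^-_{\la,\mu}$ by first obtaining a lower bound on the norm of any element of $\mathscr N^-_{\la,\mu}$, and then plugging that bound into the energy estimate. First I would take $(u,v)\in\mathscr N^-_{\la,\mu}$, so that $\varphi''_{u,v}(1)<0$. Combining the Nehari identity \eqref{1.0} with the expression \eqref{00} for $\varphi''_{u,v}(1)$, I would eliminate $R(u,v)$ to get
\begin{align*}
0>\varphi''_{u,v}(1)&\geq p^-P(u,v)-q^+Q(u,v)-(\alpha^++\beta^+)R(u,v)\\
&=(p^--(\alpha^++\beta^+))P(u,v)+(\alpha^++\beta^+-q^+)Q(u,v),
\end{align*}
which forces $Q(u,v)>0$ and gives $Q(u,v)>\frac{\alpha^++\beta^+-p^-}{\alpha^++\beta^+-q^+}P(u,v)$. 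Alternatively, eliminating $Q(u,v)$ instead gives $R(u,v)>\frac{\alpha^++\beta^+-p^-}{\alpha^++\beta^+-q^+}P(u,v)$ as in Lemma \ref{lem*}$(ii)$. Using Lemma \ref{lem4}$(i)$ and $(iii)$ together with $R(u,v)>\frac{p^--q^+}{\alpha^++\beta^+-q^+}P(u,v)$ (from the analogous elimination) one reaches, after distinguishing the cases $\|(u,v)\|<1$ and $\|(u,v)\|>1$, a uniform strictly positive lower bound $\|(u,v)\|\geq \rho_0$ for some $\rho_0=\rho_0(N,s,p,q,\alpha,\beta,c,\Omega)>0$ independent of $\la,\mu$; this is exactly the standard fact that $\mathscr N^-_{\la,\mu}$ stays away from the origin.

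Next I would estimate $J_{\la,\mu}$ from below on $\mathscr N^-_{\la,\mu}$. Using \eqref{energy}, the Nehari relation \eqref{1.0} to replace $R(u,v)$, and Lemma \ref{lem4}, I would write, for $(u,v)\in\mathscr N^-_{\la,\mu}$,
\begin{align*}
J_{\la,\mu}(u,v)&\geq \frac{1}{p^+}P(u,v)-\frac{1}{q^-}Q(u,v)-\frac{1}{\alpha^-+\beta^-}R(u,v)\\
&=\Big(\frac{1}{p^+}-\frac{1}{\alpha^-+\beta^-}\Big)P(u,v)-\Big(\frac{1}{q^-}-\frac{1}{\alpha^-+\beta^-}\Big)Q(u,v),
\end{align*}
and then bound $Q(u,v)\leq C_1(\la+\mu)\max\{\|(u,v)\|^{q^-},\|(u,v)\|^{q^+}\}$ and $P(u,v)\geq\min\{\|(u,v)\|^{p^-},\|(u,v)\|^{p^+}\}$. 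Since $q^+<p^-$, the leading power of $\|(u,v)\|$ coming from $P$ dominates that coming from $Q$; factoring out the appropriate power of $\|(u,v)\|$ and using $\|(u,v)\|\geq\rho_0$ yields
$$J_{\la,\mu}(u,v)\geq \|(u,v)\|^{q}\Big[\Big(\tfrac{1}{p^+}-\tfrac{1}{\alpha^-+\beta^-}\Big)\rho_0^{\,p-q}-\Big(\tfrac{1}{q^-}-\tfrac{1}{\alpha^-+\beta^-}\Big)C_1(\la+\mu)\Big],$$
(with $p,q$ the relevant exponents in each of the two size-cases). The bracket is positive precisely when $\la+\mu$ is smaller than an explicit multiple of the quantity appearing in $\delta$; the hypothesis $\la+\mu<(q^-/p^+)\delta$ is tailored to make this hold, giving $J_{\la,\mu}(u,v)\geq K>0$ uniformly over $\mathscr N^-_{\la,\mu}$, hence $\theta^-_{\la,\mu}\geq K>0$.

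The main obstacle here is the bookkeeping forced by the variable exponents: unlike the constant-exponent case, $P$, $Q$ and $R$ each split into a $\|(u,v)\|>1$ branch and a $\|(u,v)\|<1$ branch with different powers (Lemma \ref{lem4}), so both the norm lower bound $\rho_0$ and the final energy estimate must be assembled case by case and the constants reconciled; one must be careful that the exponent inequalities in $(A1)$ ($q^+<p^-\le p^+<\alpha^-+\beta^-$) are exactly what guarantees the dominant term wins in every case, and that the threshold on $\la+\mu$ is chosen small enough (the factor $q^-/p^+$) to absorb the worst case. Once the case analysis is organized, the positivity of $\theta^-_{\la,\mu}$ follows by taking the infimum over $\mathscr N^-_{\la,\mu}$, and one records $K$ as a constant depending on $N,s,p,q,\alpha,\beta,a,b,\la,\mu,\Omega$ as claimed.
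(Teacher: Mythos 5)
Your approach matches the paper's: (a) derive a uniform positive lower bound on $\|(u,v)\|$ for $(u,v)\in\mathscr N^-_{\la,\mu}$ from $\varphi''_{u,v}(1)<0$ combined with the Nehari identity, Lemma \ref{lem4}, and the modular--norm inequalities; (b) write $J_{\la,\mu}\geq\big(\tfrac{1}{p^+}-\tfrac{1}{\alpha^-+\beta^-}\big)P-\big(\tfrac{1}{q^-}-\tfrac{1}{\alpha^-+\beta^-}\big)Q$, bound $Q$ by Lemma \ref{lem4}$(ii)$ and $P$ from below, factor out a power of the norm, and use the norm lower bound plus the threshold $\la+\mu<(q^-/p^+)\delta$ to conclude the bracket is positive; this is exactly the paper's proof, with $K=\min\{d_1,d_2\}$ taken over the two size cases.

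One small slip in your preliminary remarks: from $0>(p^--(\alpha^++\beta^+))P+(\alpha^++\beta^+-q^+)Q$ you get $Q<\frac{\alpha^++\beta^+-p^-}{\alpha^++\beta^+-q^+}P$, an \emph{upper} bound on $Q$, not a lower bound, and it does not force $Q>0$ (indeed $Q=0$ is consistent with it). That inequality is not used in the rest of your argument, so the proof is unaffected; the correct and load-bearing elimination is the one giving $R>\frac{p^--q^+}{\alpha^++\beta^+-q^+}P$, which you also state and which drives the norm lower bound.
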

\begin{proof}
	Let $(u,v)\in  \mathscr N^-_{\la,\mu}.$ Then $\varphi''_{u,v}(1)<0. $ Therefore from \eqref{1.9} and \eqref{1.10}, we obtain{\small\begin{align}
	\label{000}
	\begin{cases}
	\|(u,v)\|\geq \bigg\{\frac{(p^--q^+)}{C_2(\alpha^++\beta^+-q^+)}\bigg\}^{1/(\alpha^-+\beta^--p^+)}
	,& \|(u,v)\|<1 \\ 
	\|(u,v)\|\geq \bigg\{\frac{(p^--q^+)}{C_2(\alpha^++\beta^+-q^+)}\bigg\}^{1/(\alpha^++\beta^+-p^-)}
	,& \|(u,v)\|>1.
	\end{cases}\end{align}}
	Now if $\|(u,v)\|<1,$ then plugging \eqref{1.0}  into \eqref{energy} and using Lemma \ref{lem4} $(i),(ii)$ and  \eqref{000}, we deduce
	{\small	\begin{align}\label{1.16}
		J_{\la,\mu}(u,v)&\geq \frac{1}{p^+}P(u,v)
		-\frac{1}{q^-}Q(u,v)-\frac{1}{\alpha^-+\beta^-}R(u,v)\nonumber\\
		&= \Big(\frac{1}{p^+}-\frac{1}{\alpha^-+\beta^-}\Big)P(u,v)
		- \Big(\frac{1}{q^-}-\frac{1}{\alpha^-+\beta^-}\Big)Q(u,v)\nonumber\\
		&\geq \Big(\frac{1}{p^+}-\frac{1}{\alpha^-+\beta^-}\Big) \|(u,v)\|^{p^+}-\Big(\frac{1}{q^-}-\frac{1}{\alpha^-+\beta^-}\Big)C_1(\la+\mu)\|(u,v)\|^{q^-}\nonumber\\
		&= \|(u,v)\|^{q^-} \bigg[\Big(\frac{1}{p^+}-\frac{1}{\alpha^-+\beta^-}\Big) \|(u,v)\|^{p^+-q^-}-\Big(\frac{1}{q^-}-\frac{1}{\alpha^-+\beta^-}\Big)C_1(\la+\mu)\bigg]\nonumber\\&\geq\bigg\{\frac{(p^--q^+)}{C_2(\alpha^++\beta^+-q^+)}\bigg\}^{\DD\frac{q^-}{(\alpha^-+\beta^--p^+)}}\bigg[\Big(\frac{1}{p^+}-\frac{1}{\alpha^-+\beta^-}\Big)\nonumber\\ &~~~~~~~~~\bigg\{\frac{(p^--q^+)}{C_2(\alpha^++\beta^+-q^+)}\bigg\}^{\DD\frac{(p^+-q^-)}{(\alpha^-+\beta^--p^+)}}-\Big(\frac{1}{q^-}-\frac{1}{\alpha^-+\beta^-}\Big)C_1(\la+\mu)\bigg]=d_1
		\end{align}}
	Next, if{\small $$\la+\mu< \Big(\frac{ q^-}{p^+}\Big)\delta= \Big(\frac{ q^-}{p^+}\Big)\frac{1}{C_1}\bigg( \frac{\alpha^-+\beta^--p^+}{\alpha^-+\beta^--q^-}\bigg)\bigg\{ \frac{(p^--q^+)}{C_2(\alpha^++\beta^+-q^+)}\bigg\}^{\DD\frac{(p^+-q^-)}{(\alpha^-+\beta^--p^+)}},$$} then {\small$$\la+\mu< \frac{{\alpha^-+\beta^-}-p^+}{p^+ (\alpha^-+\beta^-)}\bigg\{\frac{(p^--q^+)}{C_2(\alpha^++\beta^+-q^+)}\bigg\}^{\DD\frac{(p^+-q^-)}
		{(\alpha^-+\beta^--p^+)}}\frac{(\alpha^-+\beta^-)q^-}{\alpha^-+\beta^--q^-}.\frac{1}{C_1},$$} that is,
	{\small$$\Big(\frac{1}{p^+}-\frac{1}{\alpha^-+\beta^-}\Big) \bigg\{\frac{(p^--q^+)}{C_2(\alpha^++\beta^+-q^+)}\bigg\}^{\DD\frac{(p^+-q^-)}{(\alpha^-+\beta^--p^+)}}-\Big(\frac{1}{q^-}-\frac{1}{\alpha^-+\beta^-}\Big)C_1(\la+\mu)>0,$$} thus from \eqref{1.16}, we get $d_1>0.$ 
	\par Similarly for $\|(u,v)\|>1,$ again plugging \eqref{1.0}  in \eqref{energy} and using Lemma \ref{lem4} $(i),(ii)$ and  \eqref{000}, we obtain
	{\small\begin{align}\label{1.17}
		J_{\la,\mu}(u,v)&\geq \frac{1}{p^+}P(u,v)
		-\frac{1}{q^-}Q(u,v)
		-\frac{1}{\alpha^-+\beta^-}R(u,v)\nonumber\\
		&= \Big(\frac{1}{p^+}-\frac{1}{\alpha^-+\beta^-}\Big)P(u,v)
		- \Big(\frac{1}{q^-}-\frac{1}{\alpha^-+\beta^-}\Big)  Q(u,v)\nonumber\\
		&\geq \Big(\frac{1}{p^+}-\frac{1}{\alpha^-+\beta^-}\Big) \|(u,v)\|^{p^-}-\Big(\frac{1}{q^-}-\frac{1}{\alpha^-+\beta^-}\Big)C_1(\la+\mu)\|(u,v)\|^{q^+}\nonumber\\
		&= \|(u,v)\|^{q^+} \bigg\{\Big(\frac{1}{p^+}-\frac{1}{\alpha^-+\beta^-}\Big) \|(u,v)\|^{p^--q^+}-\Big(\frac{1}{q^-}-\frac{1}{\alpha^-+\beta^-}\Big)C_1(\la+\mu)\bigg\}\nonumber\\&\geq \bigg\{\frac{(p^--q^+)}{C_2(\alpha^++\beta^+-q^+)}\bigg\}^{\DD\frac{q^+}{(\alpha^++\beta^+-p^-)}}\bigg[ \Big(\frac{1}{p^+}-\frac{1}{\alpha^-+\beta^-}\Big)\nonumber\\ &~~~\bigg\{\frac{(p^--q^+)}{C_2(\alpha^++\beta^+-q^+)}\bigg\}^{\DD\frac{(p^--q^+)}{(\alpha^++\beta^+-p^-)}}-\Big(\frac{1}{q^-}-\frac{1}{\alpha^-+\beta^-}\Big)C_1(\la+\mu)\bigg]
		\end{align}}
	Combining the facts that {\small$\DD\frac{(p^--q^+)}{(\alpha^++\beta^+-p^-)}<\DD\frac{(p^+-q^-)}{(\alpha^-+\beta^--p^+)}$} and {\small $\bigg\{ \frac{(p^--q^+)}{C_2(\alpha^++\beta^+-q^+)}\bigg\}<1,$ } and plugging \eqref{1.16} into \eqref{1.17}, we can deduce 
	{\small	\begin{align*}J_{\la,\mu}(u,v)&\geq
		\bigg\{\frac{(p^--q^+)}{C_2(\alpha^++\beta^+-q^+)}\bigg\}^{\DD\frac{q^+}{(\alpha^-+\beta^--p^+)}}\bigg[\Big(\frac{1}{p^+}-\frac{1}{\alpha^-+\beta^-}\Big)\\ &~~~~~~~~~~~~\bigg\{\frac{(p^--q^+)}{C_2(\alpha^++\beta^+-q^+)}\bigg\}^{\DD\frac{(p^+-q^-)}{(\alpha^-+\beta^--p^+)}}-\Big(\frac{1}{q^-}-\frac{1}{\alpha^-+\beta^-}\Big)C_1(\la+\mu)\bigg]\end{align*}
		\begin{align*}
		&\geq \bigg\{\frac{(p^--q^+)}{C_2(\alpha^++\beta^+-q^+)}\bigg\}^{\DD\frac{(q^+-q^-)}{(\alpha^-+\beta^--p^+)}} d_1 = d_2>0.
		\end{align*} }
	Finally by choosing $K=\min\{d_1,d_2\}>0,$ the lemma holds.	 
\end{proof}
Next lemma gives the nature of the  map $\varphi_{u,v}$ .
We refer \cite{Wu,chen-deng} for the same result in case of local and nonlocal $p-$Laplacian and \cite{ alves,Fan2} for variable exponent Laplacian.
\begin{lemma}\label{lem3}
	For $(u,v)\in E\setminus\{(0,0)\}$, there exists $\delta'>0$ such that for all $\la+\mu<\delta',$  we have the followings:
	\begin{itemize}
		\item [$(i).$] If $Q(u,v)=0,$ then there exists unique $t^-=t^-(u,v)$ such that $(t^-u, t^-v)\in \mathscr N^-_{\la,\mu}$ and
		$J_{\la,\mu}(t^-u,t^-v)=\DD\sup_{t\geq0}J_{\la,\mu}(tu,tv).$
		\item[$(ii).$] If  $Q(u,v)>0,$ then there exist $t^*>0$ and unique positive numbers $t^+=t^+(u,v)<t^-=t^-(u,v)$ 
		such that $(t^-u, t^-v)\in \mathscr N^-_{\la,\mu}$, $(t^+u, t^+v)\in \mathscr N^+_{\la,\mu}$ and
		$$ J_{\la,\mu}(t^+u,t^+v)=\inf_{0\leq t\leq t^*}J_{\la,\mu}(tu,tv);~ J_{\la,\mu}(t^-u,t^-v)=\sup_{t\geq 0}J_{\la,\mu}(tu,tv).$$
	\end{itemize} 
\end{lemma}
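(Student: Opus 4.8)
The strategy is the standard fibering-map analysis, but carried out carefully because of the non-homogeneity of the exponents. For a fixed $(u,v)\in E\setminus\{(0,0)\}$, recall from \eqref{1.2} that a point $(tu,tv)$ lies on $\mathscr N_{\la,\mu}$ exactly when $\varphi'_{u,v}(t)=0$. Rather than working with $\varphi'_{u,v}$ directly (whose terms carry the variable powers $t^{p(x,y)-1}$, $t^{q(x)-1}$, $t^{\alpha(x)+\beta(x)-1}$ and are therefore not monomials), I would instead follow the device used by Alves et al. and introduce the auxiliary function
\begin{equation*}
m_{u,v}(t):=\int_{\RR^N\times\RR^N}\frac{t^{p(x,y)}}{\|(u,v)\|^{?}}\cdots
\end{equation*}
— more precisely, set $\psi_{u,v}(t):=t^{-q^{+}}\varphi'_{u,v}(t)$ (or split into the cases $t\lessgtr 1$ and $\|(u,v)\|\lessgtr 1$ and use the pointwise bounds $p^{-}\le p(x,y)\le p^{+}$ etc. together with Lemma \ref{lem4}) so that the "$P$-part" becomes increasing and dominates near $t=0$, while the "$R$-part" dominates as $t\to\infty$ because $\alpha^-+\beta^- > p^+ > q^+$ by $(A1)$. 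The point of dividing by a power of $t$ is to expose the fact that, up to lower-order variable-exponent corrections, $\varphi'_{u,v}(t)=0$ is equivalent to a balance between a term behaving like $t^{p-q}P(u,v)$ and a term behaving like $t^{\alpha+\beta-q}R(u,v)$, against the constant $Q(u,v)$.

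\textit{Case $(i)$: $Q(u,v)=0$.} Then $\varphi'_{u,v}(t)=0$ reduces to
\begin{equation*}
\int_{\RR^N\times\RR^N} t^{p(x,y)-1}\Big\{\tfrac{|u(x)-u(y)|^{p(x,y)}}{|x-y|^{N+sp(x,y)}}+\tfrac{|v(x)-v(y)|^{p(x,y)}}{|x-y|^{N+sp(x,y)}}\Big\}\,dxdy=\int_\Omega t^{\alpha(x)+\beta(x)-1}c(x)|u|^{\alpha(x)}|v|^{\beta(x)}\,dx.
\end{equation*}
Dividing by $t^{p^+-1}$ (resp. $t^{p^--1}$) and using $(A1)$, the left side stays bounded away from $0$ near $t=0$ while the right side $\to 0$; conversely as $t\to\infty$ the right side wins. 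Hence there is at least one positive root $t^-$. For uniqueness and the sign of $\varphi''_{u,v}(t^-)$: at any critical point $t$ with $Q=0$ one has from \eqref{1.2}–\eqref{1.3} that $t\varphi''_{u,v}(t)\le p^+\,t^{p^{+}-1}P$-type$-(\alpha^-+\beta^-)\,t^{\alpha^-+\beta^--1}R$-type, and substituting the critical-point relation shows $\varphi''_{u,v}(t)<0$ because $\alpha^-+\beta^->p^+$; so every critical point is a strict local maximum, which forces uniqueness (two maxima would require a minimum in between). Thus $(t^-u,t^-v)\in\mathscr N^-_{\la,\mu}$, and since $\varphi_{u,v}(0)=0$, $\varphi_{u,v}$ is increasing then decreasing, $J_{\la,\mu}(t^-u,t^-v)=\sup_{t\ge0}J_{\la,\mu}(tu,tv)$.

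\textit{Case $(ii)$: $Q(u,v)>0$.} Now $\varphi_{u,v}(0)=0$, $\varphi'_{u,v}(0^+)<0$ (the $Q$-term, carrying the smallest powers $t^{q(x)-1}$, dominates near $0$ by $(A1)$ since $q^+<p^-$), and $\varphi_{u,v}(t)\to+\infty$... actually $\to-\infty$? No: the highest power $\alpha+\beta$ appears with a $-$ sign, so $\varphi_{u,v}(t)\to-\infty$ as $t\to\infty$; combined with $\varphi'_{u,v}(0^+)<0$ one sees $\varphi_{u,v}$ first decreases, and since it must eventually go to $-\infty$ after possibly rising, it has (generically) exactly one local minimum $t^+$ and one local maximum $t^-$ with $0<t^+<t^-$. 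To make this rigorous I would analyze $h_{u,v}(t):=t^{-q^{+}}\big(P\text{-term}(t)-R\text{-term}(t)\big)$ (the equation $\varphi'_{u,v}(t)=0$ is $h$-type$(t)=Q(u,v)$-type after dividing appropriately), show via the exponent inequalities in $(A1)$ that this function is $0$ at $t=0$, increases to a unique interior maximum $t^*$, then decreases to $-\infty$; choosing $\la+\mu<\delta'$ small guarantees (using Lemma \ref{lem4}$(i),(ii),(iii)$, exactly as in the proof of Lemma \ref{lem5}) that the maximum value exceeds $Q(u,v)$, so the horizontal line meets the graph in exactly two points $t^+<t^*<t^-$. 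At $t^+$ the function $h$ is increasing so $\varphi''_{u,v}(t^+)>0$ (i.e. $(t^+u,t^+v)\in\mathscr N^+_{\la,\mu}$), at $t^-$ decreasing so $\varphi''_{u,v}(t^-)<0$ (i.e. $(t^-u,t^-v)\in\mathscr N^-_{\la,\mu}$); and $\varphi_{u,v}$ is decreasing on $(0,t^+)$, increasing on $(t^+,t^-)$, decreasing on $(t^-,\infty)$, giving $J_{\la,\mu}(t^+u,t^+v)=\inf_{0\le t\le t^*}J_{\la,\mu}(tu,tv)$ and $J_{\la,\mu}(t^-u,t^-v)=\sup_{t\ge0}J_{\la,\mu}(tu,tv)$.

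\textit{Main obstacle.} The genuine difficulty is replacing the clean monomial behavior of the constant-exponent fibering map by monotonicity statements that survive the spread between $p^-$ and $p^+$ (and between $\alpha^-+\beta^-$ and $\alpha^++\beta^+$). One cannot simply factor $t^{q}$ out of $\varphi'_{u,v}$; instead one must bound the variable-exponent integrals above and below by the two-sided estimates of Lemma \ref{lem4} and Lemma \ref{lem 3.1}, and then run the argument separately on $\{t\le 1\}$ and $\{t\ge 1\}$ (and on $\{\|(u,v)\|\le1\}$, $\{\|(u,v)\|\ge1\}$), patching the pieces. The smallness threshold $\delta'$ (which can be taken to be $\delta$ from Lemma \ref{lem5}, or a fixed fraction thereof, consistently with Lemma \ref{lem8}) is precisely what is needed to ensure the relevant maximum of the auxiliary function lies strictly above $Q(u,v)$ so that the two roots $t^+,t^-$ are distinct and $\mathscr N^0_{\la,\mu}$ is avoided.
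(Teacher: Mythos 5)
Your treatment of part $(i)$ is correct and closely parallels the paper's, with a small but valid variation: you argue that when $Q(u,v)=0$, \emph{every} critical point $t$ of $\varphi_{u,v}$ satisfies $\varphi''_{u,v}(t)<0$ because
$t\varphi''_{u,v}(t)\leq (p^+-1)\int t^{p(x,y)-1}[P\text{-integrand}]-(\alpha^-+\beta^--1)\int t^{\alpha+\beta-1}[R\text{-integrand}]$,
and at a critical point these two integrals coincide, yielding $\big(p^+-(\alpha^-+\beta^-)\big)\cdot(\text{positive})<0$; hence every critical point is a strict local maximum and there can be only one. The paper instead renormalizes to $(\overline u,\overline v)=(t^-u,t^-v)$, uses the Nehari relation $P(\overline u,\overline v)=R(\overline u,\overline v)$ and shows $\Theta'_{\overline u,\overline v}(t)>0$ for $t<1$ and $<0$ for $t>1$. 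These two arguments are equivalent.

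For part $(ii)$ there is a real gap, and it lies exactly at the place you flag as the main obstacle. Your plan hinges on rewriting $\varphi'_{u,v}(t)=0$ as ``$h_{u,v}(t)=Q(u,v)$'' after dividing by $t^{q^+}$ (or by a single monomial power more generally), and then intersecting a curve with a \emph{horizontal} line. But because $q$ is a genuine variable exponent, $f_2(t)=\int_\Omega t^{q(x)}\big(\lambda a|u|^{q(x)}+\mu b|v|^{q(x)}\big)\,dx$ is not of the form $t^{q^+}\cdot(\text{const})$, so after dividing $\varphi'_{u,v}$ by any single power of $t$ \emph{all three} pieces remain $t$-dependent and there is no horizontal line to intersect. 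You notice this (``One cannot simply factor $t^q$ out of $\varphi'_{u,v}$'') but do not resolve it; the two-sided bounds from Lemma \ref{lem4} and the case split $t\lessgtr 1$ only control $f_2(t)$ up to the spread $[t^{q^-},t^{q^+}]$, which is not sharp enough to get an exact count of two intersection points. The device the paper uses is different: it sets $f_1,f_2,f_3$ to be the $P$-, $Q$-, $R$-contributions to $t\varphi'_{u,v}(t)$ and studies the ratio $\DD\frac{(f_1-f_3)(t)}{f_2(t)}$, so that the Nehari condition becomes literally $\DD\frac{(f_1-f_3)(t)}{f_2(t)}=1$. Now the target really is a constant; the observations (I)--(V) (the ratio vanishes at $0^+$, rises on an interval $(0,\tilde t)$ past $1$ for $\lambda+\mu$ small, $f_1-f_3$ has a unique max $t_{\max}$ and tends to $-\infty$, $f_2\uparrow\infty$) give exactly one intersection on each side of $t_{\max}$, producing $t^+<t_{\max}<t^-$, with the sign of $\varphi''$ determined by the monotonicity of the ratio, and $\mathscr N_{\lambda,\mu}^0=\emptyset$ from Lemma \ref{lem5} excluding the degenerate case. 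To repair your argument you should replace the monomial normalization by this ratio; the rest of your description of the fibering-map shape is then consistent with the paper's proof.
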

\begin{proof}
	$(i).$	Using the given assumption,
	for $0<t<1$ sufficiently small,
	{\small\begin{align*}\varphi_{u,v}(t)&>\frac{t^{p^+}}{p^+}P(u,v) -\frac{t^{\alpha^++\beta^+}}{\alpha^++\beta^+}R(u,v)>0\end{align*}} 
	and for $t>1$ sufficiently large 
	{\small\begin{align*}\varphi_{u,v}(t)&<\frac{t^{p^+}}{p^-}P(u,v) -\frac{t^{\alpha^++\beta^+}}{\alpha^-+\beta^-}R(u,v) <0.\end{align*}}
	Hence $\varphi_{u,v}$ achieves its maximum at some point $t^-(u,v)$ on $[0,\infty).$ 
	Thus we have $\varphi'_{u,v}(t^-)$ $=\langle J'_{\la,\mu}(t^-u,t^-v), (u,v)\rangle$ $=0.$ Set $(t^-u,t^-v)=(\overline{u},\overline{v}).$
	Then $\langle J'_{\la,\mu}(\overline{u},\overline{v}), (\overline{u},\overline{v})\rangle$ $=0,$ 
	which implies $(\overline{u},\overline{v})\in\mathscr{N}_{\la,\mu}.$ Therefore inserting from \eqref{1.0}, we get 
	\begin{align}\label{6.1}
	P(\overline{u},\overline{v})=R(\overline{u},\overline{v}).
	\end{align}
	Now we define the function $\Theta_{\overline{u},\overline{v}}:[0,\infty)\ra\RR$ as 
	$\Theta_{\overline{u},\overline{v}}(t)=J_{\la,\mu}(t\overline{u},t\overline{v}).$
	We know that $\Theta_{\overline{u},\overline{v}}(1)=J_{\la,\mu}(\overline{u},\overline{v})=\DD\max_{t\in[0,\infty)} \Theta_{\overline{u},\overline{v}}(t)$ and $\Theta'_{\overline{u},\overline{v}}(1)=\langle J'_{\la,\mu}(\overline{u},\overline{v}), (\overline{u},\overline{v})\rangle=0.$ For $t>1,$ by \eqref{6.1} we obtain
	\begin{align*}
	\Theta'_{\overline{u},\overline{v}}(t)&=\langle J'_{\la,\mu}(t\overline{u},t\overline{v}), (\overline{u},\overline{v})\rangle\\
	&\leq t^{p^+-1}P(\overline{u},\overline{v})-t^{{\alpha^-+\beta^-}-1}R(\overline{u},\overline{v})<0,
	\end{align*}
	and on the other hand for $t\in (0,1)$ again using \eqref{6.1}, we obtain
	\begin{align*}
	\Theta'_{\overline{u},\overline{v}}(t)&=\langle J'_{\la,\mu}(t\overline{u},t\overline{v}), (\overline{u},\overline{v})\rangle\\
	&\geq t^{p^+-1}P(\overline{u},\overline{v})-t^{{\alpha^-+\beta^-}-1}R(\overline{u},\overline{v})>0.
	\end{align*}
	This shows that the point $t^-$ is unique. Hence the result follows.\\
	$(ii).$ To prove this lemma, first we set
	\begin{align*}f_1(t)&:=\DD\int_{\RR^N\times\RR^N}t^{p(x,y)}\bigg\{\frac{|
		u(x)-u(y)|^{p(x,y)}}{|
		x-y|^{N+sp(x,y)}}+\frac{|
		v(x)-v(y)|^{p(x,y)}}{|
		x-y|^{N+sp(x,y)}}\bigg\}dxdy;
	\\f_2(t)&:=\DD\int_{\Omega}t^{q(x)}\Big(\la a(x)| u|^{q(x)}+
	\mu b(x)| v|^{q(x)}\Big) dx;\\ 
	f_3(t)&:=\DD\int_{\Omega}t^{\alpha(x)+\beta(x)}c(x)| u|^{\alpha(x)}| v| ^{\beta(x)} dx.\end{align*} 
	Then $f_i$'s are continuous and strictly  increasing functions with $f_i(0)=0$ for $i=1,2,3$.
	Also we  have the following observations.
	{\begin{itemize}
		\item [(I).] $\DD\lim_{t\ra0^+} \frac{f_3(t)}{f_1(t)}=0.$
		\item [(II).]$\DD\lim_{t\ra+\infty} f_2(t)=+\infty.$
		\item [(III).]$\DD\lim_{t\ra+\infty} \frac{(f_1-f_3)(t)}{f_2(t)}=0.$
		\item [(IV).]$f_1-f_3$ has unique point of maximum
		, say $ {t}_{max}$ and $(f_1-f_3)(t)\ra-\infty$  as $t\ra +\infty.$ 
		\item [(V).]  There exists $\tilde{t}\in (0, {t}_{max})$ such that $\DD\frac{f_1-f_3}{f_2}$ is strictly increasing on $(0,\tilde{t}).$
	\end{itemize}}
	From (I), we note that $(f_1-f_3)(t)>0$ for $t\ra 0^+$ sufficiently small. Hence using (V) and intermediate value theorem, 
	we have that for each choice of the pair  $(\la,\mu)\in\RR^+\times\RR^+$ with $f_2(\tilde t)<(f_1-f_3)(\tilde t)$, 
	there exists a unique $t^+=t^+({\la,\mu})\in (0,\tilde{t})$ such that
	{\small\begin{align}\label{5} 
	\frac{(f_1-f_3)(t^+)}{f_2(t^+)}=1.
	\end{align}}
	Since $\frac{(f_1-f_3)}{f_2}$ is strictly monotone increasing in $(t^+,\tilde{t})$,  
	from \eqref{5}, we get 
{\small\begin{align*}
	\DD1=\frac{(f_1-f_3)(t^+)}{f_2(t^+)}<\frac{(f_1-f_3)(t)}{f_2(t)}\;\text{ for all }t\in (t^+,\tilde{t}), 
	\end{align*}}that is,
	\begin{align}\label{5.0}
	f_2(t)<(f_1-f_3)(t) ~\text{ for all ~} t\in (t^+,\tilde{t}). 
	\end{align} 
	Now we can fix $(\la^*,\mu^*)\in\RR^+\times\RR^+$ such that
	for all $\la\in(0,\la^*),\mu\in(0,\mu^*)$, taking into account \eqref{5.0}, we have 
	\begin{align}\label{5.2}f_2(t)<(f_1-f_3)({t})\text{ for all }t\in (t^+,t_{\max}).\end{align}
	Since $f_1-f_3$ is strictly decreasing in $({t}_{\max},\infty)$ and  $f_2$ is monotonically increasing in $(0,\infty)$, using (II),
	it follows from \eqref{5.2} that there exists a unique positive real number $t^{-}> {t}_{max}$ such that 
	\begin{align}\label{5.3}
	f_2(t^-)=(f_1-f_3)(t^-) \text{~ for~all ~$(\la,\mu)\in(0,\la^*)\times(0,\mu^*).$} 
	\end{align}
	Hence from \eqref{5} and \eqref {5.3}, it follows that the function
	$\varphi'_{u,v}(t)=f_1-f_2-f_3$ has exactly two nontrivial zeros, $t^+<  ~t^-,$ that is, $t^+$ and $t^-$ are critical points of
	$\varphi_{u,v}(t)$.
	For $\delta':=\la^*+\mu^*$, we can choose $\la^*,\mu^*>0$ sufficiently small such that $\delta'<\delta,$ where $\delta$ is as given in Lemma \ref{lem5}. Then as
	$\varphi_{u,v}(0)=0$ and $\varphi_{u,v}(t)<0$ for $t\ra0^+$ sufficiently small,
	we get $\varphi'_{u,v}(t)<0$ for all $t\in(0,t^+)$ and $\varphi'_{u,v}(t)>0$ for all $t\in(t^+,t_{max})$  and
	$\varphi'_{u,v}(t^+)=0.$ Now as from Lemma \ref{lem5}, we have $\mathcal{N}_{\la,\mu}^0=\emptyset$, we can conclude that  $\varphi_{u,v}$ attains a local minimum  at $t^+$ and consequently 
	$\varphi''_{u,v}(t^+)>0.$ Hence $(t^+u, t^+v)\in \mathscr{N}^+.$
	\par Similarly as  we have $\varphi'_{u,v}(t)>0$ for all $t\in[t_{max},t^-)$, $\varphi'_{u,v}(t)<0$ for all 
	$t>t^-$ and $\varphi'_{u,v}(t^-)=0,$ from Lemma \ref{lem5} using the fact  $\mathcal{N}_{\la,\mu}^0=\emptyset,$ 
	it follows that  $t^-$ is the point of global maximum for $\varphi_{u,v}$ and 
	consequently $\varphi''_{u,v}(t^-)<0.$ Hence $(t^-u,t^-v)\in \mathscr{N}^-. $ 
	Now combining Lemma \ref{lem7} and Lemma \ref{lem8}, we obtain $\varphi_{u,v}(t^+)<0$ and  $\varphi_{u,v}(t^-)>0.$
	Also from the above discussion, we get that $\varphi_{u,v}$ is strictly increasing  on $[t^+,t^-],$ and strictly decreasing for all $t> t^-$ with
	$\varphi_{u,v}(t)\ra-\infty$ as $t\ra+\infty.$ Thus there exists a unique $t^*\in (t^+,t^-) $  such that $\varphi_{u,v}(t^*)=0$.
	Therefore{\small $$J_{\la,\mu}(t^+u,t^+v)=\varphi_{u,v}(t^+)=\inf_{0\leq t\leq t^*}\phi_{u,v}(t)=\inf_{0\leq t\leq t^*}J_{\la,\mu}(tu,tv)$$}
	and
	{\small$$J_{\la,\mu}(t^-u,t^-v)=\varphi_{u,v}(t^-)=\DD\sup_{t\geq 0}\phi_{u,v}(t)=\sup_{t\geq 0}J_{\la,\mu}(tu,tv).$$}
	This completes the lemma.	
\end{proof}
\section{ Existence of multiple solutions}\label{manifold}
In this section we give the proof of the existence of  at least two distinct non-trivial and non-negative weak solutions for the problem \ref{mainprob}. The next two propositions  ensure  the existence of  minimizers for the functional $J_{\la,\mu}$ in $\mathscr N^+_{\la,\mu}$ and  $\mathscr N^-_{\la,\mu}$, respectively, which serve as weak solutions to problem \eqref{mainprob}.
We set $\delta_0:=\min\Big\{\Big(\frac{q^-}{p^+}\Big)\delta, \delta'\Big\},$ where $\Big(\frac{q^-}{p^+}\Big)\delta$ and $\delta'$ are as given in  
Lemma \ref{lem8} and Lemma \ref{lem3}, respectively.
\begin{proposition}\label{lem9}
	For $\la+\mu<\delta_0,$   the functional $	J_{\la,\mu}$ has a  minimizer $(u_0,v_0)$ in $\mathscr N^+_{\la,\mu},$ which satisfies 
	the followings:
	\begin{itemize}
		\item [$(i).$]  $	J_{\la,\mu}(u_0,v_0)=\theta_{\la,\mu}^+<0;$ 
		\item[ $(ii).$] $(u_0,v_0)$ is a  solution of the problem \eqref{mainprob} 
	\end{itemize}	
\end{proposition}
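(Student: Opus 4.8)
The plan is to obtain $(u_0,v_0)$ as a minimizer of $J_{\la,\mu}$ over $\mathscr N^+_{\la,\mu}$ by the direct method and then to invoke Lemma \ref{lem1} to turn it into a critical point. Since $\la+\mu<\delta_0\leq\delta$, Lemma \ref{lem6} shows $J_{\la,\mu}$ is coercive and bounded below on $\mathscr N_{\la,\mu}\supseteq\mathscr N^+_{\la,\mu}$, so $\theta^+_{\la,\mu}$ is finite and one can take a minimizing sequence $(u_n,v_n)\in\mathscr N^+_{\la,\mu}$ with $J_{\la,\mu}(u_n,v_n)\to\theta^+_{\la,\mu}$. Coercivity makes $\{(u_n,v_n)\}$ bounded in $E$, and as $E=X_0\times X_0$ is reflexive (Lemma \ref{lem 3.3}) one may assume $(u_n,v_n)\rightharpoonup(u_0,v_0)$ in $E$. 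Using the compact embedding $X_0\hookrightarrow\hookrightarrow L^{\alpha(x)+\beta(x)}(\Omega)$ (Theorem \ref{prp 3.3}, valid since $\alpha+\beta<p^*_s$ by $(A1)$), H\"older's inequality (Lemma \ref{holder}) with the weights from $(A3)$--$(A4)$, and continuity of the Nemytskii operators (Lemmas \ref{lemA1}, \ref{prp2}), one obtains $Q(u_n,v_n)\to Q(u_0,v_0)$ and $R(u_n,v_n)\to R(u_0,v_0)$.

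Next one checks $(u_0,v_0)\neq(0,0)$ and $Q(u_0,v_0)>0$. If $(u_0,v_0)=(0,0)$, then $Q(u_n,v_n),R(u_n,v_n)\to0$; since $(u_n,v_n)\in\mathscr N_{\la,\mu}$, \eqref{1.0} gives $P(u_n,v_n)=Q(u_n,v_n)+R(u_n,v_n)\to0$, so $\|(u_n,v_n)\|\to0$ by Lemma \ref{lem4}$(i)$ and hence $J_{\la,\mu}(u_n,v_n)\to0$, contradicting $\theta^+_{\la,\mu}<0$ (Lemma \ref{lem7}). The same reasoning shows $P(u_n,v_n)\not\to0$ along subsequences, so $\liminf_nP(u_n,v_n)>0$; combined with $Q(u_n,v_n)\geq\frac{\alpha^-+\beta^--p^+}{\alpha^-+\beta^--q^-}P(u_n,v_n)$ from Lemma \ref{lem*}$(i)$ (as $(u_n,v_n)\in\mathscr N^+_{\la,\mu}$), letting $n\to\infty$ gives $Q(u_0,v_0)>0$.

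The main obstacle is the strong convergence $(u_n,v_n)\to(u_0,v_0)$, since the non-homogeneity of the variable exponents rules out the usual one-line argument. As $(u,v)\mapsto P(u,v)=\rho_{X_0}(u)+\rho_{X_0}(v)$ is convex and strongly continuous, it is weakly sequentially lower semicontinuous, so with the convergence of $Q,R$ we get $J_{\la,\mu}(u_0,v_0)\leq\theta^+_{\la,\mu}$ and $P(u_0,v_0)\leq\liminf_nP(u_n,v_n)=Q(u_0,v_0)+R(u_0,v_0)$, i.e. $\varphi'_{u_0,v_0}(1)\leq0$. One then argues $\varphi'_{u_0,v_0}(1)=0$, so that $(u_0,v_0)\in\mathscr N_{\la,\mu}$: since $Q(u_0,v_0)>0$ and $\la+\mu<\delta_0\leq\delta'$, Lemma \ref{lem3}$(ii)$ provides the unique $t^+=t^+(u_0,v_0)>0$ with $(t^+u_0,t^+v_0)\in\mathscr N^+_{\la,\mu}$, with $\varphi_{u_0,v_0}$ continuous, $\varphi_{u_0,v_0}(0)=0$, and $\varphi_{u_0,v_0}$ strictly decreasing on $(0,t^+)$. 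If $\varphi'_{u_0,v_0}(1)<0$, the sign analysis of $\varphi'_{u_0,v_0}$ in the proof of Lemma \ref{lem3} puts $1$ in $(0,t^+)$ or in $(t^-,\infty)$ with $t^-=t^-(u_0,v_0)$. The case $1>t^-$ is impossible: then $t^-\in(0,1)$, and since each $\varphi_{u_n,v_n}$ is continuous with $\varphi_{u_n,v_n}(0)=0$ and strictly decreasing on $(0,1)=(0,t^+(u_n,v_n))$, we get $\varphi_{u_n,v_n}(t^-)<0$ for all $n$, hence $\liminf_n\varphi_{u_n,v_n}(t^-)\leq0$, whereas weak lower semicontinuity at the fixed value $t^-$ forces $\varphi_{u_0,v_0}(t^-)\leq\liminf_n\varphi_{u_n,v_n}(t^-)$ while $\varphi_{u_0,v_0}(t^-)=\sup_{t\geq0}\varphi_{u_0,v_0}=J_{\la,\mu}(t^-u_0,t^-v_0)>0$ by Lemma \ref{lem8} (since $(t^-u_0,t^-v_0)\in\mathscr N^-_{\la,\mu}$ and $\la+\mu<(q^-/p^+)\delta$), a contradiction. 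In the case $1\in(0,t^+)$, strict monotonicity of $\varphi_{u_0,v_0}$ there gives
\[
\theta^+_{\la,\mu}\leq J_{\la,\mu}(t^+u_0,t^+v_0)=\varphi_{u_0,v_0}(t^+)<\varphi_{u_0,v_0}(1)=J_{\la,\mu}(u_0,v_0)\leq\theta^+_{\la,\mu},
\]
again a contradiction. Hence $\varphi'_{u_0,v_0}(1)=0$, $(u_0,v_0)\in\mathscr N_{\la,\mu}$, and $P(u_0,v_0)=Q(u_0,v_0)+R(u_0,v_0)=\lim_nP(u_n,v_n)$; so $\rho_{X_0}(u_n)+\rho_{X_0}(v_n)\to\rho_{X_0}(u_0)+\rho_{X_0}(v_0)$, and weak lower semicontinuity of each modular forces $\rho_{X_0}(u_n)\to\rho_{X_0}(u_0)$ and $\rho_{X_0}(v_n)\to\rho_{X_0}(v_0)$. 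Together with $u_n\rightharpoonup u_0$, $v_n\rightharpoonup v_0$, the uniform convexity of $X_0$ (Lemma \ref{lem 3.3}) and the modular--norm relations (Lemmas \ref{lem 3.1}, \ref{lem 3.2}) then upgrade this to $(u_n,v_n)\to(u_0,v_0)$ strongly in $E$.

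Finally, strong convergence gives $(u_0,v_0)\in\mathscr N_{\la,\mu}$ and $J_{\la,\mu}(u_0,v_0)=\theta^+_{\la,\mu}<0$. Since $\la+\mu<\delta_0\leq\delta$, Lemma \ref{lem5} gives $\mathscr N^0_{\la,\mu}=\emptyset$, and since $\la+\mu<\delta_0\leq(q^-/p^+)\delta$, Lemma \ref{lem8} gives $\theta^-_{\la,\mu}>0$, so $(u_0,v_0)\notin\mathscr N^-_{\la,\mu}$ and therefore $(u_0,v_0)\in\mathscr N^+_{\la,\mu}$. Thus $(u_0,v_0)$ is a minimizer of $J_{\la,\mu}$ on $\mathscr N^+_{\la,\mu}$ with $J_{\la,\mu}(u_0,v_0)=\theta^+_{\la,\mu}<0$, which is $(i)$; being in particular a local minimizer of $J_{\la,\mu}$ on $\mathscr N^+_{\la,\mu}$, Lemma \ref{lem1} shows $(u_0,v_0)$ is a critical point of $J_{\la,\mu}$, hence a weak solution of \eqref{mainprob} in the sense of Definition \ref{defi}, which is $(ii)$. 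If in addition a non-negative solution is wanted, one replaces $(u_0,v_0)$ by $(|u_0|,|v_0|)$: this leaves $Q$ and $R$ unchanged and does not increase $P$ (since pointwise $\big||u_0|(x)-|u_0|(y)\big|\leq|u_0(x)-u_0(y)|$), so re-fibering onto $\mathscr N^+_{\la,\mu}$ produces a non-negative minimizer with the same energy.
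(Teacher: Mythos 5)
Your proof follows the paper's minimizing--sequence skeleton, but the strong-convergence step is organized quite differently. The paper assumes strong convergence fails, applies the strict Brezis--Lieb lemma to get $J_{\la,\mu}(u_0,v_0) < \theta^+_{\la,\mu}$, then takes $t_0^+=t^+(u_0,v_0)$, shows $\varphi'_{u_m,v_m}(t_0^+)>0$ for large $m$ so that $t_0^+>1$, and uses monotonicity of $\varphi_{u_0,v_0}$ on $(0,t_0^+)$ to derive the contradiction $J_{\la,\mu}(t_0^+u_0,t_0^+v_0)\leq J_{\la,\mu}(u_0,v_0)<\theta^+_{\la,\mu}$. You instead deduce $\varphi'_{u_0,v_0}(1)\leq 0$ from weak lower semicontinuity, rule out strict inequality by a two-case fibering argument (cases $1<t^+$ and $t^-<1$), conclude $P(u_n,v_n)\to P(u_0,v_0)$, and from there pass to strong convergence. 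Both routes are valid; yours has the advantage of isolating the identity $P(u_0,v_0)=Q(u_0,v_0)+R(u_0,v_0)$ before touching norm convergence, and it makes explicit that the excluded case $t^-<1$ is controlled by $\theta^-_{\la,\mu}>0$ (Lemma \ref{lem8}), a step the paper sidesteps by showing $t_0^+>1$ directly from $\lim_m\varphi'_{u_m,v_m}(t_0^+)>0$.

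There is, however, one imprecision you should repair. In the last step you claim that $u_n\rightharpoonup u_0$, $\rho_{X_0}(u_n)\to\rho_{X_0}(u_0)$, \emph{uniform convexity} of $X_0$ (Lemma \ref{lem 3.3}) and the modular--norm relations (Lemmas \ref{lem 3.1}, \ref{lem 3.2}) together give $u_n\to u_0$ in $X_0$. The Radon--Riesz argument you are gesturing at requires $\|u_n\|_{X_0}\to\|u_0\|_{X_0}$, which does not follow from $\rho_{X_0}(u_n)\to\rho_{X_0}(u_0)$ via Lemma \ref{lem 3.1} alone: when $p^-<p^+$ the two-sided bounds between modular and norm overlap across distinct norm values, so modular convergence does not force norm convergence. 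The correct tool here (and the one the paper itself relies on) is the Brezis--Lieb lemma applied to $\rho_{X_0}$: since the compact embedding of Theorem \ref{prp 3.3} yields $u_n\to u_0$ a.e.\ up to a subsequence (hence $u_n(x)-u_n(y)\to u_0(x)-u_0(y)$ a.e.\ on $\mathbb R^{2N}$), Brezis--Lieb gives $\rho_{X_0}(u_n)-\rho_{X_0}(u_n-u_0)\to\rho_{X_0}(u_0)$, so $\rho_{X_0}(u_n-u_0)\to0$ and Lemma \ref{lem 3.2} finishes. With this substitution your proof is complete. (A small further omission: to write $J_{\la,\mu}(u_0,v_0)\leq\theta^+_{\la,\mu}$ you also need the convergence of the $\tfrac{1}{q(x)}$- and $\tfrac{1}{\alpha(x)+\beta(x)}$-weighted integrals, not merely of $Q$ and $R$; these hold as in \eqref{N1}--\eqref{111} but should be invoked explicitly.)
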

\begin{proof} $(i)$
	Since $	J_{\la,\mu}$ is bounded below on  $\mathscr N_{\la,\mu}$ and hence on $\mathscr N^+_{\la,\mu}$, there exists a minimizing sequence 
	$\{(u_m,v_m)\}\subset \mathscr N^+_{\la,\mu},$ such that
	{\small$$\DD\lim_{m\ra\infty}J_{\la,\mu}(u_m,v_m)=\DD\inf_{(u,v)\in \mathscr N^+_{\la,\mu} }J_{\la,\mu}(u,v).$$}
	As from Lemma \ref{lem6}, we have $J_{\la,\mu}$ is coercive on $\mathscr N^+_{\la,\mu}$, we get that $\{(u_m,v_m)\}$ is bounded on $E$.
	Hence there exists $(u_0,v_0)\in E,$ such that, passing to a sub-sequence 
	$$u_m\rightharpoonup u_0, ~~v_m\rightharpoonup v_0\text{~~~in~~} X_0\text{~~~~as~} m\ra\infty$$ 
	and hence using {Sobolev-type embedding result (Lemma \ref{prp 3.3})}, we have
	$$u_m\ra u_0~~\text{strongly in } L^{q(x)}(\Omega) \text{ and } L^{\alpha(x)+\beta(x)}(\Omega),$$
	$$v_m\ra v_0~~\text{strongly in } L^{q(x)}(\Omega)\text{ and } L^{\alpha(x)+\beta(x)}(\Omega),$$
	as $m\ra\infty.$ Therefore $u_m(x)\ra u_0(x)$ and  $v_m(x)\ra v_0(x)$ a.e. in $\Omega$ as $m\ra\infty.$ 
	Now by applying Lemma \ref{prp2} and Dominated convergence theorem, one can check that
	{\small\begin{align}\label{11}
		&\DD\lim_{m\rightarrow\infty}\int_{\Omega}a(x)|u_m|^{q(x)}dx=
		\int_{\Omega}a(x)|u_0|^{q(x)}dx,\;\;\DD\lim_{m\rightarrow\infty}\int_{\Omega}b(x)|v_m|^{q(x)}dx=
		\int_{\Omega}b(x)|v_0|^{q(x)}dx,
		\end{align}}
	{\small	\begin{align}\label{N1}
		&\DD\lim_{m\rightarrow\infty}\int_{\Omega}\frac{a(x)|u_m|^{q(x)}}{q(x)}dx=\int_{\Omega}\frac{a(x)|u_0|^{q(x)}}{q(x)}dx,
		\;\DD\lim_{m\rightarrow\infty}\int_{\Omega}\frac{b(x)|v_m|^{q(x)}}{q(x)}dx=\int_{\Omega}\frac{b(x)|v_0|^{q(x)}}{q(x)}dx.
		\end{align}}
	Also by Lemma \ref{convex} and  Lemma \ref{cor} (see Appendix), we have
	{\small\begin{align}\label{111}   
		\DD\lim_{m\rightarrow\infty}R(u_m,v_m)= R(u_0,v_0)\text{ and } 
		\DD\lim_{m\rightarrow\infty} \int_{\Omega}\frac{ c(x)|u_m|^{\alpha(x)}|v_m|^{\beta(x)}}{\alpha(x)+\beta(x)}dx=
		\int_{\Omega}\frac{c(x)|u_0|^{\alpha(x)}|v_0|^{\beta(x)}}{\alpha(x)+\beta(x)}dx,
		\end{align}} respectively.
	We claim that $(u_0,v_0)\not\equiv(0,0).$
	Note that $Q(u_0,v_0)>0.$ Indeed,
	if not then from \eqref{11},
	{\small\begin{align}\label{01}Q(u_m,v_m)\ra Q(u_0,v_0)=0~as~ m\ra \infty.\end{align}}
	Since $(u_m,v_m)\in \mathscr N^+_{\la,\mu}$, 
	using  \eqref{energy} and \eqref{1.0}, we get
	{\small\begin{align*}
	J_{\la,\mu}(u_m,v_m)
	&\geq \Big(\frac{1}{p^+}-\frac{1}{\alpha^-+\beta^-}\Big) P(u_m,v_m)-\Big(\frac{1}{q^-}-\frac{1}{\alpha^-+\beta^-}\Big)Q(u_m,v_m).
	\end{align*}}
	Now letting $m\ra\infty$ in the both side of the above expression and using \eqref{01}, we obtain 
	\begin{align}\label{88}\lim_{m\ra \infty} J_{\la,\mu}(u_m,v_m)\geq0.
	\end{align}
	But  Lemma \ref{lem7} gives that $\DD\lim_{m\ra \infty}	J_{\la,\mu}(u_m,v_m)=\DD\inf_{(u,v)\in\mathscr N^+_{\la,\mu}}J_{\la,\mu}(u,v)<0,$ 
	which contradicts \eqref{88}.
	Hence the claim is proved and we get that $(u_0,v_0)\in E\setminus\{(0,0)\}.$ 	 
	Next we claim that $$u_m\ra u_0  \text{ and }
	v_m\ra v_0 ~\text{strongly in ~} X_0\text{ as }m\ra\infty.$$ 
	Supposing the contrary, we have $u_m\nrightarrow u_0$ or $v_m\nrightarrow v_0$ strongly in $X_0$ as $m\ra\infty.$ 
	Therefore using Lemma \ref{lem 3.2} and Brezis-Lieb lemma (\cite{bl}), it follows that
{\small	\begin{align}\label{New1}
	& \text{either} \int_{\RR^N\times\RR^N}\frac{1}{p(x,y)}
	\frac{|
		u_0(x)-u_0(y)|^{p(x,y)}}
	{|x-y|^{N+sp(x,y)}}dxdy\nonumber\\&~~~~~~~~~~~~~~~~~~~~~~~~~~~~~~~~~
	<\DD\liminf_{m\ra\infty}\int_{\RR^N\times\RR^N}
	\frac{1}{p(x,y)}
	\frac{|u_m(x)-u_m(y)|^{p(x,y)}}{|x-y|^{N+sp(x,y)}}dxdy\nonumber\\
	&\text{~or~~}\int_{\RR^N\times\RR^N}\frac{1}{p(x,y)}
	\frac{|
		v_0(x)-v_0(y)|^{p(x,y)}}{|
		x-y|^{N+sp(x,y)}}dxdy\nonumber\\&~~~~~~~~~~~~~~~~~~~~~~~~~~~~~~~~~~
	<\DD\liminf_{m\ra\infty}\int_{\RR^N\times\RR^N}
	\frac{1}{p(x,y)}
	\frac{|v_m(x)-v_m(y)|^{p(x,y)}}{|x-y|^{N+sp(x,y)}}dxdy.
	\end{align}}
	Thus combining  \eqref{N1}, \eqref{111} and \eqref{New1}, from \eqref{energy}, we obtain {\begin{align}\label{123} 
		&\lim_{m\ra\infty} J_{\la,\mu}(u_m,v_m)\nonumber\\&=\DD\liminf_{m\ra\infty}\bigg[\int_{\RR^N\times\RR^N}\frac{1}{p(x,y)}
		\bigg\{\frac{|
			u_m(x)-u_m(y)|^{p(x,y)}}{|
			x-y|^{N+sp(x,y)}}+\frac{|
			v_m(x)-v_m(y)|^{p(x,y)}}{|
			x-y|^{N+sp(x,y)}}\bigg\}dxdy\nonumber\\ &~~~-\int_{\Omega}\frac{1}{q(x)}\Big(\la a(x)| u_m|^{q(x)}+
		\mu b(x)| v_m|^{q(x)}\Big) dx -\int_{\Omega}\frac{1}{\alpha(x)+\beta(x)} c(x)| u_m|^{\alpha(x)}| v_m| ^{\beta(x)} dx. \bigg]\nonumber\\
		&\geq\DD\liminf_{m\ra\infty}\int_{\RR^N\times\RR^N}\frac{1}{p(x,y)}\frac{|
			u_m(x)-u_m(y)|^{p(x,y)}}{|
			x-y|^{N+sp(x,y)}}dxdy\nonumber\\&~~~~~~+\DD\liminf_{m\ra\infty}\int_{\RR^N\times\RR^N}\frac{1}{p(x,y)}\frac{|
			v_m(x)-v_m(y)|^{p(x,y)}}{|
			x-y|^{N+sp(x,y)}}dxdy\nonumber\\ &~~~~~~-\DD\lim_{m\ra\infty}\int_{\Omega}\frac{1}{q(x)}\Big(\la a(x)| u_m|^{q(x)}+
		\mu b(x)| v_m|^{q(x)}\Big) dx\nonumber\\ &~~~~~~~-\DD\lim_{m\ra\infty}\int_{\Omega}\frac{1}{\alpha(x)+\beta(x)}c(x)| u_m|^{\alpha(x)}| v_m| ^{\beta(x)} dx\nonumber\\
		&>\int_{\RR^N\times\RR^N}\frac{1}{p(x,y)}\frac{|
			u_0(x)-u_0(y)|^{p(x,y)}}{|
			x-y|^{N+sp(x,y)}}dxdy\nonumber+\int_{\RR^N\times\RR^N}\frac{1}{p(x,y)}\frac{|
			v_0(x)-v_0(y)|^{p(x,y)}}{|
			x-y|^{N+sp(x,y)}}dxdy\nonumber\\ &~~~-\int_{\Omega}\frac{1}{q(x)}\Big(\la a(x)| u_0|^{q(x)}+
		\mu b(x)| v_0|^{q(x)}\Big) dx -\int_{\Omega}\frac{1}{\alpha(x)+\beta(x)}c(x)| u_0|^{\alpha(x)}| v_0| ^{\beta(x)} dx\nonumber\\
		&=  J_{\la,\mu}(u_0,v_0)
		\end{align}}
	\noindent Now using Lemma \ref{lem3} $(ii)$, for $(u_0,v_0)\in E\setminus\{(0,0)\},$ there exists a positive real number $t_0^+(u_0,v_0)$ 
	such that $(t_0^+u_0, t_0^+v_0)\in\mathscr N^+_{\la,\mu}.$  Again using the assumption $u_m\nrightarrow u_0$ or $v_m\nrightarrow v_0$ strongly in $X_0$, we have
	{\small\begin{align}\label{ineq1}
	\rho_{X_0}(t_0^+u_0)<\DD\liminf_{m\ra\infty}\rho_{X_0}(t_0^+u_m)~ \text{or}~ \rho_{X_0}(t_0^+v_0)<\DD\liminf_{m\ra\infty}\rho_{X_0}(t_0^+v_m).
	\end{align} }
	Also by Lemma \ref{prp2} and Dominated convergence theorem, we get 
	{\small\begin{align}\label{n01}
	Q(t_0^+u_0,t_0^+v_0)=\DD\lim_{m\ra\infty}Q(t_0^+u_m,t_0^+v_m)
	\end{align}}
	and by Lemma \ref{convex}(see Appendix),
	{\small\begin{align}\label{n02}
	R(t_0^+u_0,t_0^+v_0)=\DD\lim_{m\ra\infty}R(t_0^+u_m,t_0^+v_m).
	\end{align}}
	Taking into account \eqref{n01}, \eqref{n02} and \eqref{ineq1}, from \eqref{1.2}, we deduce 
	{\small	\begin{align}\label{1.25}
		&\DD\lim_{m\ra\infty}\varphi'_{u_m,v_m}(t_0^+)\nonumber\\&=\DD\liminf_{m\ra\infty}\bigg[\int_{\RR^N\times\RR^N}{(t_0^+)^{p(x,y)-1}}\bigg\{\frac{|
			u_m(x)-u_m(y)|^{p(x,y)}}{|
			x-y|^{N+sp(x,y)}}+\frac{|
			v_m(x)-v_m(y)|^{p(x,y)}}{|
			x-y|^{N+sp(x,y)}}\bigg\}dxdy\nonumber\\&~-\int_{\Omega}{(t_0^+)^{q(x)-1}}\Big(\la a(x)| u_m|^{q(x)}+
		\mu b(x)| v_m|^{q(x)}\Big) dx -\int_{\Omega}{(t_0^+)^{\alpha(x)+\beta(x)-1}}c(x)| u_m|^{\alpha(x)}| v_m| ^{\beta(x)} dx \bigg]\nonumber\\&\geq\frac {1}{t_0^+}\Bigg[\DD\liminf_{m\ra\infty}\rho_{X_0}(t_0^+u_m)+ \DD\liminf_{m\ra\infty}\rho_{X_0}(t_0^+v_m)-\lim_{m\ra \infty}Q(t_0^+u_m, t_0^+v_m)-\lim_{m\ra\infty}R(t_0^+u_m, t_0^+v_m)\Bigg]\nonumber\\
		&>\frac {1}{t_0^+}\Bigg[\rho_{X_0}(t_0^+u_0)+\rho_{X_0}(t_0^+v_m)-Q(t_0^+u_0, t_0^+v_0)-R(t_0^+u_0, t_0^+v_0)\Bigg]\nonumber\\
		&=\varphi'_{u_0,v_0}(t_0^+)=0.
		\end{align}}
	Thus for $m$ large enough $\varphi'_{u_m,v_m}(t_0^+)>0.$
	Since $(u_m,v_m)\in \mathscr N^+_{\la,\mu}$ for all $m\in \mathbb N,$ we have $\varphi'_{u_m,v_m}(1)=0$  and $\varphi''_{u_m,v_m}(1)>0.$ Then using  Lemma \ref{lem3} $(ii),$ we get $\varphi'_{u_m,v_m}(t)<0$ 
	for all $t\in(0,1).$ Therefore from \eqref{1.25}, we must have $t_0^+>1.$ Since $(t_0^+u_0, t_0^+v_0) \in \mathscr N^+_{\la,\mu},$ 
	again by Lemma \ref{lem3} $(ii),$ we obtain $\varphi_{u_0,v_0}(t)$ is monotone decreasing on $(0,t_0^+),$ therefore using \eqref{123},
	we conclude
	{\small$$J_{\la,\mu}(t_0^+u_0,t_0^+v_0)\leq J_{\la,\mu}(u_0,v_0)<\DD\lim_{m\ra\infty} J_{\la,\mu}(u_m,v_m)
	=\DD\inf_{(u,v)\in\mathscr N^+_{\la,\mu}} J_{\la,\mu}(u,v).$$} This is a contradiction as $(t_0^+u_0,t_0^+v_0)\in \mathscr N^+_{\la,\mu}. $ 
	Hence $(u_m,v_m)\ra (u_0,v_0) $ strongly in $E$ as $m\ra\infty$ and thus $(u_0,v_0)\in \mathscr N_{\la,\mu}$. Now
	as  Lemma \ref{lem5} gives that $\mathscr N^0_{\la,\mu}=\emptyset$ and by Lemma \ref{lem7}, 
	we have $J_{\la,\mu}(u_0,v_0)=\DD\lim_{m\ra\infty}J_{\la,\mu}(u_m,v_m)<0,$ we infer that 
	$(u_0,v_0)\in \mathscr N^+_{\la,\mu}$. 
	\\$(ii).$ Using Lemma \ref{lem1}, we can conclude $(u_0,v_0)$ is a solution of \eqref{mainprob}.
\end{proof}
\begin{proposition}\label{lem10}
If $\la+\mu<\delta_0,$  then $	J_{\la,\mu}$ has a  minimizer $(w_0,z_0)$ in $\mathscr N^-_{\la,\mu}$ such that the followings hold true.
\begin{itemize}
	\item [$(i).$ ] $J_{\la,\mu}(w_0,z_0)=\theta_{\la,\mu}^->0.$
	\item [$(ii).$]  $(w_0,z_0)$ is a non-semi trivial solution of the problem \eqref{mainprob}.	
\end{itemize}
\end{proposition}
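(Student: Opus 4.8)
The plan is to mirror the argument of Proposition \ref{lem9}, working now on $\mathscr N^-_{\la,\mu}$, where the infimum $\theta^-_{\la,\mu}$ is strictly positive by Lemma \ref{lem8}. First I would take a minimizing sequence $\{(w_m,z_m)\}\subset\mathscr N^-_{\la,\mu}$ with $J_{\la,\mu}(w_m,z_m)\to\theta^-_{\la,\mu}$; since $J_{\la,\mu}$ is coercive on $\mathscr N_{\la,\mu}$ (Lemma \ref{lem6}) the sequence is bounded in $E$, so up to a subsequence $w_m\rightharpoonup w_0$, $z_m\rightharpoonup z_0$ in $X_0$, and by the compact embedding (Lemma \ref{prp 3.3}) $w_m\to w_0$, $z_m\to z_0$ strongly in $L^{q(x)}(\Omega)$ and $L^{\alpha(x)+\beta(x)}(\Omega)$, with a.e.\ convergence. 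Then Lemma \ref{prp2}, the dominated convergence theorem and Lemmas \ref{convex}--\ref{cor} give $Q(w_m,z_m)\to Q(w_0,z_0)$ and $R(w_m,z_m)\to R(w_0,z_0)$, together with the analogous convergences for the integrals carrying the weights $1/q(x)$ and $1/(\alpha(x)+\beta(x))$.

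Next I would show $R(w_0,z_0)>0$, which forces $(w_0,z_0)\ne(0,0)$ and in fact that neither component vanishes. Indeed, for $(w_m,z_m)\in\mathscr N^-_{\la,\mu}$, Lemma \ref{lem*}$(ii)$ gives $R(w_m,z_m)\ge\frac{\alpha^++\beta^+-p^-}{\alpha^++\beta^+-q^+}P(w_m,z_m)$, while \eqref{000} (valid since $\varphi''_{w_m,z_m}(1)<0$) bounds $\|(w_m,z_m)\|$ away from $0$; combined with Lemma \ref{lem4}$(i)$ this bounds $P(w_m,z_m)$ below by a positive constant, hence so is $R(w_m,z_m)$, and passing to the limit $R(w_0,z_0)>0$. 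Since $R(w_0,z_0)=\int_\Omega c(x)|w_0|^{\alpha(x)}|z_0|^{\beta(x)}\,dx$, this yields $w_0\not\equiv 0$ and $z_0\not\equiv 0$, i.e.\ $(w_0,z_0)$ is non-semi-trivial.

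The heart of the argument is upgrading weak convergence to strong convergence $w_m\to w_0$, $z_m\to z_0$ in $X_0$. Suppose not; then by Lemma \ref{lem 3.2} together with the Brezis--Lieb lemma, the leading (modular) terms are strictly lower semicontinuous, exactly as in \eqref{New1}. Since $R(w_0,z_0)>0$, Lemma \ref{lem3} applies to $(w_0,z_0)$ (case $(i)$ if $Q(w_0,z_0)=0$, case $(ii)$ if $Q(w_0,z_0)>0$), producing a unique $t_0^-=t^-(w_0,z_0)>0$ with $(t_0^-w_0,t_0^-z_0)\in\mathscr N^-_{\la,\mu}$ and $J_{\la,\mu}(t_0^-w_0,t_0^-z_0)=\sup_{t\ge0}J_{\la,\mu}(tw_0,tz_0)$. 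On the other hand, because $\mathscr N^0_{\la,\mu}=\emptyset$ (Lemma \ref{lem5}) and the point $t^-$ is unique, each $(w_m,z_m)\in\mathscr N^-_{\la,\mu}$ satisfies $J_{\la,\mu}(w_m,z_m)=\sup_{t\ge0}J_{\la,\mu}(tw_m,tz_m)\ge J_{\la,\mu}(t_0^-w_m,t_0^-z_m)$. Combining the strict lower semicontinuity of the leading terms with the convergence of $Q$ and $R$ evaluated at $t_0^-$ (again Lemma \ref{prp2}, dominated convergence and Lemmas \ref{convex}--\ref{cor}) I would obtain
\[
J_{\la,\mu}(t_0^-w_0,t_0^-z_0)<\liminf_{m\to\infty}J_{\la,\mu}(t_0^-w_m,t_0^-z_m)\le\liminf_{m\to\infty}J_{\la,\mu}(w_m,z_m)=\theta^-_{\la,\mu},
\]
contradicting $(t_0^-w_0,t_0^-z_0)\in\mathscr N^-_{\la,\mu}$. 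Hence the convergence is strong, so $J_{\la,\mu}(w_0,z_0)=\theta^-_{\la,\mu}>0$ and $(w_0,z_0)\in\mathscr N_{\la,\mu}$; since $\mathscr N_{\la,\mu}=\mathscr N^+_{\la,\mu}\cup\mathscr N^-_{\la,\mu}$ and $J_{\la,\mu}<0$ on $\mathscr N^+_{\la,\mu}$ (Lemma \ref{lem7}), we must have $(w_0,z_0)\in\mathscr N^-_{\la,\mu}$, giving $(i)$.

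For $(ii)$, $(w_0,z_0)$ is a local minimizer of $J_{\la,\mu}$ on $\mathscr N^-_{\la,\mu}$, so Lemma \ref{lem1} shows it is a critical point of $J_{\la,\mu}$, hence a weak solution of \eqref{mainprob}; replacing $(w_0,z_0)$ by $(|w_0|,|z_0|)$ — which does not increase $J_{\la,\mu}$ and keeps the pair in $\mathscr N^-_{\la,\mu}$ — we may assume it is non-negative, and non-semi-triviality was established above. I expect the strong-convergence step to be the main obstacle: the non-homogeneity of the $p(x,y)$-modular rules out the clean scaling identities of the constant-exponent case, so one is forced to work through the fibering-map characterisation of $\mathscr N^-_{\la,\mu}$ and track the variable-exponent convergence lemmas carefully rather than argue by a direct computation.
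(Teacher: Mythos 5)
Your argument is correct and follows the same overall scheme as the paper: minimizing sequence, boundedness from coercivity, weak and $L^{q(x)}$/$L^{\alpha(x)+\beta(x)}$ convergence, and then the fibering-map contradiction to upgrade weak to strong convergence, with the final placement in $\mathscr N^-_{\la,\mu}$ coming from $\mathscr N^0_{\la,\mu}=\emptyset$ and the sign of $\theta^-_{\la,\mu}$. The one genuinely different piece is how you dispose of triviality and semi-triviality. The paper argues by contradiction: if $(w_0,z_0)=(0,0)$ then $R(w_m,z_m)\to0$, and combining \eqref{energy} with \eqref{1.0} forces $\lim_m J_{\la,\mu}(w_m,z_m)\le0$, against Lemma \ref{lem8}; the non-semi-triviality of part $(ii)$ is then handled by a separate energy computation showing any weak solution of the form $(u,0)$ has $J_{\la,\mu}(u,0)<0$, incompatible with $\theta^-_{\la,\mu}>0$. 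You instead observe that every $(w_m,z_m)\in\mathscr N^-_{\la,\mu}$ satisfies the quantitative lower bound $R(w_m,z_m)\ge c\,P(w_m,z_m)$ from Lemma \ref{lem*}$(ii)$ together with the uniform lower bound on $\|(w_m,z_m)\|$ from \eqref{000}, so $R(w_m,z_m)$ stays bounded away from $0$, and by Lemma \ref{convex} this passes to the limit to give $R(w_0,z_0)>0$ directly. This is slightly more economical: it immediately yields both $(w_0,z_0)\neq(0,0)$ and the fact that neither component vanishes, so the separate semi-triviality computation becomes unnecessary. A second, smaller economy is that you do not bother to establish $t_0^-<1$ via the derivative estimate \eqref{2.25}; indeed, once one knows $t=1$ is the global maximizer of $\varphi_{w_m,z_m}$, the comparison $J_{\la,\mu}(t_0^-w_m,t_0^-z_m)\le J_{\la,\mu}(w_m,z_m)$ holds regardless of where $t_0^-$ falls, so that step in the paper's proof is not strictly needed. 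One caveat worth recording: when you invoke Lemma \ref{lem3} for $(w_0,z_0)$ you should note explicitly that the case analysis $(i)$/$(ii)$ both require $R(w_0,z_0)>0$ for the fibering map to have the stated shape, which is exactly what your preliminary estimate provides; and the replacement of $(w_0,z_0)$ by $(|w_0|,|z_0|)$ to get a non-negative minimizer belongs to the proof of Theorem \ref{mainthm} (and requires re-scaling into $\mathscr N^-_{\la,\mu}$), not to Proposition \ref{lem10} itself, which only claims a non-semi-trivial solution.
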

\begin{proof}
Since by Lemma \ref{lem8}, $	J_{\la,\mu}$ is bounded below on  $\mathscr N^-_{\la,\mu}$, there exists a minimizing sequence $\{(w_m,z_m)\}\subset\mathscr N^-_{\la,\mu},$ 
such that {\small$$\DD\lim_{m\ra\infty}J_{\la,\mu}(w_m,z_m)=\DD\inf_{(u,v)\in \mathscr N^-_{\la,\mu} }J_{\la,\mu}(u,v).$$}
As from Lemma \ref{lem6}, we have  $J_{\la,\mu}$ is coercive,  we get that $\{(w_m,z_m)\}$ is bounded on $E$ and thus there exists $(w_0,z_0)\in E$
such that up to a sub-sequence  $(w_m,z_m)\rightharpoonup (w_0,z_0)$ weakly and by Sobolev-type embedding result 
(Theorem \ref{prp 3.3}), we have
$$w_m\ra w_0,~z_m\ra z_0~~\text{strongly in } L^{q(x)}(\Omega)\text{ and }L^{\alpha(x)+\beta(x)}(\Omega) \text{~as ~$m\ra\infty.$}$$
Therefore $w_m(x)\ra w_0(x)$ and  $z_m(x)\ra z_0(x)$ a.e. in $\Omega$ as $m\ra\infty.$ 
Now by Lemma \ref{prp2} and Dominated convergence theorem, we derive
{\small\begin{align}\label{21}
&\DD\lim_{m\rightarrow\infty}\int_{\Omega}a(x)|w_m|^{q(x)}dx=
\int_{\Omega}a(x)|w_0|^{q(x)}dx, 
\DD\lim_{m\rightarrow\infty}\int_{\Omega}b(x)|z_m|^{q(x)}dx=\int_{\Omega}b(x)|z_0|^{q(x)}dx.
\end{align}}
Also by Lemma \ref{convex} (see Appendix), we have
{\small\begin{align}\label{211}   
R(w_m,z_m)\ra R(w_0,z_0)~\text{as}~ m\ra\infty.
\end{align}}
{Next we have $(w_0,z_0)\not\equiv(0,0).$ Indeed, if $(w_0,z_0)=(0,0),$ from \eqref{211}, we obtain
{\small	\begin{align}\label{03}R(w_m,z_m)\ra R(w_0,z_0)=0 ~ \text{as~} m\ra\infty.\end{align}}
	Since $(w_m,z_m)\in \mathscr{N}_{\la,\mu}^-,$ 
	using \eqref{1.0} and Lemma \ref{lem7}, from \eqref{energy}, we deduce
	{\small\begin{align*}
	0<K&<J_{\la,\mu}(w_m,z_m)\leq \Big(\frac{1}{p^+}-\frac{1}{q^-}\Big)P(w_m,z_m)
	+\Big(\frac{1}{q^-}-\frac{1}{\alpha^-+\beta^-}\Big)R(w_m,z_m)+ o_m(1).
	\end{align*}}
	Now letting $m\ra\infty $ in both side of the above expression and using \eqref{03}, we have 
	{\small\begin{align*}0<K<\lim_{m\ra\infty}J_{\la,\mu}(w_m,z_m)\leq 0,
	\end{align*}} 
	which is a contradiction. Thus $(w_0,z_0)\in E\setminus\{(0,0)\}.$
	Now, if $Q(w_0,z_0)=0,$ we use Lemma \ref{lem3} $(i)$ and if $Q(w_0,z_0)>0,$ we use Lemma \ref{lem3} $(ii).$
	In both the cases,  there exists a positive real number $t_0^-=t_0^-(w_0,z_0)$ such that $(t_0^-w_0, t_0^-z_0)\in\mathscr N^-_{\la,\mu}.$}
Next we claim that $$w_m\ra w_0 ~\text{~strongly in } X_0\text{~ and~}
z_m\ra z_0 ~\text{~strongly in } X_0 \text{~as~} m\ra\infty.$$ Supposing the contrary, then     
$t_0^-w_m\nrightarrow t_0^- w_0$ or $t_0^-z_m\nrightarrow t_0^-z_0$ strongly in $X_0$ as $m\ra\infty$. This implies that 
{\small\begin{align}\label{001}
\text{ either}~\rho_{X_0}(t_0^-w_0)<\DD\liminf_{m\ra\infty}\rho_{X_0}(t_0^-w_m)~ \text{or}~ \rho_{X_0}(t_0^-z_0)<\DD\liminf_{m\ra\infty}\rho_{X_0}(t_0^-z_m).
\end{align}} 
Furthermore using the same assumption, we can have the following as in Proposition \ref{lem9}:
{\small\begin{align}\label{new1}
	&\text{either}  \int_{\RR^N\times\RR^N}\frac{1}{p(x,y)}
	\frac{|
		t_0^-w_0(x)-t_0^-w_0(y)|^{p(x,y)}}
	{|x-y|^{N+sp(x,y)}}dxdy\nonumber\\&~~~~~~~~~~~~~~~~~~~~~~~~~~~~~~~~~
	<\DD\liminf_{m\ra\infty}\int_{\RR^N\times\RR^N}
	\frac{1}{p(x,y)}
	\frac{|t_0^-w_m(x)-t_0^-w_m(y)|^{p(x,y)}}{|x-y|^{N+sp(x,y)}}dxdy\nonumber\\
	&\text{~or~~}\int_{\RR^N\times\RR^N}\frac{1}{p(x,y)}
	\frac{|
		t_0^-z_0(x)-t_0^-z_0(y)|^{p(x,y)}}{|
		x-y|^{N+sp(x,y)}}dxdy\nonumber\\&~~~~~~~~~~~~~~~~~~~~~~~~~~~~~~~~~~
	<\DD\liminf_{m\ra\infty}\int_{\RR^N\times\RR^N}
	\frac{1}{p(x,y)}
	\frac{|t_0^-z_m(x)-t_0^-z_m(y)|^{p(x,y)}}{|x-y|^{N+sp(x,y)}}dxdy.
	\end{align} }
Note that using Lemma \ref{prp2} and Dominated converges theorem, we can deduce
{\small\begin{align}\label{n4}
	\DD\lim_{m\rightarrow\infty}\int_{\Omega}\frac{1}{q(x)}\Big(\la a(x)|t_0^- w_m|^{q(x)}+
	\mu b(x)| t_0^-z_m|^{q(x)}\Big) dx\nonumber\\~~~~~~~~~~~~~~~~=\int_{\Omega}\frac{1}{q(x)}\Big(\la a(x)|t_0^- w_0|^{q(x)}+
	\mu b(x)| t_0^-z_0|^{q(x)}\Big) dx.
	\end{align}}
Also by Lemma \ref{cor}(see Appendix), we have
{\small\begin{align}\label{n5}
	\DD\lim_{m\ra\infty}\int_{\Omega}\frac{1}{\alpha(x)+\beta(x)}c(x)|t_0^- w_m|^{\alpha(x)}| t_0^-z_m| ^{\beta(x)} dx
	=\int_{\Omega}\frac{1}{\alpha(x)+\beta(x)}c(x)|t_0^- w_0|^{\alpha(x)}| t_0^-z_0| ^{\beta(x)} dx.
	\end{align}}
Thus combining  \eqref{new1}, \eqref{n4} and \eqref{n5}, from \eqref{energy}, we obtain 
{\small\begin{align}\label{3.25} 
	&\lim_{m\ra\infty} J_{\la,\mu}(t_0^-w_m,t_0^-z_m)\nonumber\\&=\DD\liminf_{m\ra\infty}\bigg[\int_{\RR^N\times\RR^N}\frac{1}{p(x,y)}
	\bigg\{\frac{|
		t_0^-w_m(x)-t_0^-w_m(y)|^{p(x,y)}}{|
		x-y|^{N+sp(x,y)}}+\frac{|
		t_0^-z_m(x)-t_0^-z_m(y)|^{p(x,y)}}{|
		x-y|^{N+sp(x,y)}}\bigg\}dxdy\nonumber\\ &~-\int_{\Omega}\frac{1}{q(x)}\Big(\la a(x)| t_0^-w_m|^{q(x)}+
	\mu b(x)| t_0^-z_m|^{q(x)}\Big) dx \nonumber\\&-\int_{\Omega}\frac{1}{\alpha(x)+\beta(x)} c(x)|t_0^- w_m|^{\alpha(x)}| t_0^-z_m| ^{\beta(x)} dx \bigg]\nonumber\\
	&\geq\DD\liminf_{m\ra\infty}\int_{\RR^N\times\RR^N}\frac{1}{p(x,y)}\frac{|
		t_0^-w_m(x)-t_0^-w_m(y)|^{p(x,y)}}{|
		x-y|^{N+sp(x,y)}}dxdy\nonumber\\&+\DD\liminf_{m\ra\infty}\int_{\RR^N\times\RR^N}\frac{1}{p(x,y)}\frac{|
		t_0^-z_m(x)-t_0^-z_m(y)|^{p(x,y)}}{|
		x-y|^{N+sp(x,y)}}dxdy\nonumber\\ &~-\DD\lim_{m\ra\infty}\int_{\Omega}\frac{1}{q(x)}\Big(\la a(x)|t_0^- w_m|^{q(x)}+
	\mu b(x)|t_0^- z_m|^{q(x)}\Big) dx \nonumber\\&-\DD\lim_{m\ra\infty}\int_{\Omega}\frac{1}{\alpha(x)+\beta(x)}c(x)|t_0^- w_m|^{\alpha(x)}| t_0^-z_m| ^{\beta(x)} dx\nonumber\\
	&>\int_{\RR^N\times\RR^N}\frac{1}{p(x,y)}\frac{|
		t_0^-w_0(x)-t_0^-w_0(y)|^{p(x,y)}}{|
		x-y|^{N+sp(x,y)}}dxdy\nonumber\nonumber\nonumber\\&+\int_{\RR^N\times\RR^N}\frac{1}{p(x,y)}\frac{|
		t_0^-z_0(x)-t_0^-z_0(y)|^{p(x,y)}}{|
		x-y|^{N+sp(x,y)}}dxdy\nonumber\\ &~-\int_{\Omega}\frac{1}{q(x)}\Big(\la a(x)|t_0^- w_0|^{q(x)}+
	\mu b(x)|t_0^- z_0|^{q(x)}\Big) dx -\DD\int_{\Omega}\frac{1}{\alpha(x)+\beta(x)}c(x)|t_0^- w_0|^{\alpha(x)}| t_0^-z_0| ^{\beta(x)} dx\nonumber\\
	&=  J_{\la,\mu}(t_0^-w_0,t_0^-z_0).
	\end{align}}
Again by using the strong convergence of $w_m\rightarrow w_0$ and $z_m\rightarrow z_0$ in $L^{q(x)}(\Omega)$, we deduce that
{\small\begin{align}\label{3.23}
	\lim_{m\ra\infty}
	Q (t_0^-w_m,t_0^-z_m)= Q (t_0^-w_0,t_0^-z_0)
	\end{align}}
and by Lemma \ref{convex}(see Appendix), we get
{\small\begin{align}\label{n3} 
	\lim_{m\ra\infty}R(t_0^-w_m,t_0^-z_m)= R(t_0^-w_0,t_0^-z_0).
	\end{align}}
Therefore using \eqref{001}, \eqref{3.23} and \eqref{n3}, from \eqref{1.2}, we deduce
{\small\begin{align}\label{2.25}
	&\DD\lim_{m\ra\infty}\varphi'_{w_m,z_m}(t_0^-)\nonumber\\
	&=\DD\liminf_{m\ra\infty}\bigg[\int_{\RR^N\times\RR^N}{(t_0^-)^{p(x,y)-1}}\bigg\{\frac{|
		w_m(x)-w_m(y)|^{p(x,y)}}{|
		x-y|^{N+sp(x,y)}}+\frac{|
		z_m(x)-z_m(y)|^{p(x,y)}}{|
		x-y|^{N+sp(x,y)}}\bigg\}dxdy\nonumber\\&~~~~~~~~~~~~~~~-\int_{\Omega}{(t_0^-)^{q(x)-1}}\Big(\la a(x)| w_m|^{q(x)}+
	\mu b(x)| z_m|^{q(x)}\Big) dx \nonumber\\&~~~~~~~~~~~~~~~~~~~-\int_{\Omega}{(t_0^-)^{\alpha(x)+\beta(x)-1}}c(x)| w_m|^{\alpha(x)}| z_m| ^{\beta(x)} dx \bigg]\nonumber\\&\geq\frac {1}{t_0^-}\Bigg[\DD\liminf_{m\ra\infty}\rho_{X_0}(t_0^-w_m)+ \DD\liminf_{m\ra\infty}\rho_{X_0}(t_0^-z_m)-\lim_{m\ra \infty}Q(t_0^-w_m, t_0^-z_m)\nonumber\\&~~~~~~~~~~~~~~~~~-\lim_{m\ra\infty}R(t_0^-w_m, t_0^-z_m)\Bigg]\nonumber\\
	&>\frac {1}{t_0^-}\Bigg[\rho_{X_0}(t_0^-w_0)+\rho_{X_0}(t_0^-z_m)-Q(t_0^-w_0, t_0^-z_0)-R(t_0^-w_0, t_0^-z_0)\Bigg]\nonumber\\
	&=\varphi'_{u_0,v_0}(t_0^-)=0.
	\end{align}}
For $m$ large enough $\varphi'_{w_m,z_m}(t_0^-)>0.$ Now since $(w_m,z_m)\in \mathscr N^-_{\la,\mu}$ for all $m\in \mathbb N,$ 
we have $\varphi'_{w_m,z_m}(1)=0$  and $\varphi''_{w_m,z_m}(1)<0$ for all $m\in \mathbb N.$
Now using the Lemma \ref{lem3}, we get $\varphi'_{w_m,z_m}(t)<0$ for all $t>1.$ Then from \eqref{2.25}, 
we must have $t_0^-<1.$ Since $(t_0^-w_0, t_0^-z_0) \in \mathscr N^-_{\la,\mu},$ again using Lemma \ref{lem3},
we obtain  1 is the global maximum point for $\varphi_{w_m,z_m}(t),$ therefore from \eqref{3.25}, we conclude
$$J_{\la,\mu}(t_0^-w_0,t_0^-z_0)
<\DD\lim_{m\ra\infty} J_{\la,\mu}(t_0^-w_m,t_0^-z_m)
\leq \lim_{m\ra\infty} J_{\la,\mu}(w_m,z_m)
=\DD\inf_{(u,v)\in\mathscr N^-_{\la,\mu}} J_{\la,\mu}(u,v).$$ This is a contradiction to the fact that $(t_0^-w_0,t_0^-z_0)\in \mathscr N^-_{\la,\mu}. $ 
Hence $(w_m,z_m)\ra (w_0,z_0) $ strongly in $E$ as $m\ra\infty$ and $(w_0,z_0)\in\mathscr{N}.$   
Also using the fact $\mathscr N^0_{\la,\mu}=\emptyset$ from Lemma \ref{lem5} and noticing that $J_{\la,\mu}(w_0,z_0)=\DD\inf_{(u,v)\in \mathscr N_{\la,\mu}^-}J_{\la,\mu}(u,v)>0,$  
we conclude that $(w_0,z_0)\in\mathscr N^-_{\la,\mu}.$
\\$(ii).$ Using Lemma \ref{lem1}, we can conclude $(w_0,z_0)$ is a solution of \eqref{mainprob}. 
Now we prove $(w_0,z_0)$ is not  semi-trivial, that is not of the form $(u,0)$ (or $(0,v)$). The proof follows as in \cite{chen-deng}.
If $(u,0)$ (or $(0,v)$) is a semi-trivial solution of problem \ref{mainprob}, then from \eqref{weakform}, we get 
\begin{align*}
\rho_{X_0}(u)=\int_{\RR^N\times\RR^N}\frac{|
	u(x)-u(y)|^{p(x,y)}}{|
	x-y|^{N+sp(x,y)}}dxdy =\la \int_{\Omega}  a(x)| u|^{q(x)} 
dx.
\end{align*} Therefore, 
\begin{align*}J_{\la,\mu}(u,0)&=\int_{\RR^N\times\RR^N}\frac{1}{p(x,y)}\frac{|
	u(x)-u(y)|^{p(x,y)}}{|
	x-y|^{N+sp(x,y)}}dxdy -\la \int_{\Omega} \frac{1}{q(x)}  a(x)| u|^{q(x)} 
dx\\
&\leq\frac{1}{p^-}\int_{\RR^N\times\RR^N}\frac{|
	u(x)-u(y)|^{p(x,y)}}{|
	x-y|^{N+sp(x,y)}}dxdy-\frac{\la}{q^+}\int_{\Omega}   
a(x)| u|^{q(x)} dx\\
&= \bigg(\frac{1}{p^-}-\frac{1}{q^+}\bigg)\rho_{X_0}(u)<0,
\end{align*}
since by Lemma \ref{lem8}, $J_{\la,\mu}(w_0,z_0)>0,$ we can conclude that $(w_0, z_0)$ is not semi-trivial.
\end{proof}
\noindent{\bf Proof of Theorem \ref{mainthm}.}  Define $\Lambda=\delta_0$ (as given in Section \ref{manifold}). {Let $(u_0,v_0)$ be as obtained in Proposition \ref{lem9}.
Now using Lemma \ref{lem*} and the fact that $(u_0,v_0)\in \mathscr N^+_{\la,\mu}$, for 
$(|u_0|,|v_0|)\in E\setminus \{(0,0)\}$, we have $Q(|u_0|, |v_0|)=Q(u_0, v_0)>0$, and
thus from Lemma \ref{lem3} $(ii)$, there exists $t_1>0$ such that $(t_1|u_0|,t_1|v_0|)\in\mathscr{N}_{\la,\mu}^+.$
This implies that 
\begin{equation}\label{M1}
0=\varphi'_{|u_0|,|v_0|}(t_1)\leq\varphi'_{u_0,v_0}(t_1).
\end{equation}
Now combining \eqref{M1} with the facts that
$(u_0,v_0)\in\mathscr{N}_{\la,\mu}^+$,
$\varphi'_{u_0,v_0}(1)=0$, 
and again using Lemma \ref{lem3} $(ii),$ we get $t_1\geq1.$ This implies that
$$J_{\la,\mu}(t_1|u_0|,t_1|v_0|)  \leq J_{\la,\mu}(|u_0|,|v_0|)\leq J_{\la,\mu}(u_0,v_0)=\DD\inf_{(u,v)\in\mathscr N^+_{\la,\mu}} J_{\la,\mu}(u,v).$$
Therefore we deduce that there exists a non-negative minimizer of $J_{\la,\mu}$ in $\mathscr{N}_{\la,\mu}^+$, which is a solution of problem \ref{mainprob} by Lemma \ref{lem1}.}
\par Next  we assert that there exists
a non-negative minimizer of $J_{\la,\mu}(w,z)$ in  $\mathscr N^-_{\la,\mu}$. Indeed   for $(|w_0|,|z_0|)\in E\setminus\{(0,0)\},$  by Lemma \ref{lem3}, there exists $t_2>0$ such that $(t_2|w_0|,t_2|z_0|)\in\mathscr{N}_{\la,\mu}^-,$ where $(w_0,z_0)$ is as given in Proposition \ref{lem10}. Since $(w_0,z_0)\in\mathscr{N}_{\la,\mu}^-,$ again by  Lemma \ref{lem3},
we get
$$J_{\la,\mu}(t_2|w_0|,t_2|z_0|)  \leq J_{\la,\mu}(t_2w_0,t_2z_0)\leq J_{\la,\mu}(w_0,z_0)=\DD\inf_{(u,v)\in\mathscr N^-_{\la,\mu}} J_{\la,\mu}(u,v).$$
Hence we get a non-negative minimizer of $J_{\la,\mu}$ in $\mathscr N^-_{\la,\mu},$ which is a solution of problem \ref{mainprob}, thanks to Lemma \ref{lem1}.\par From the above discussion,
we have that for all $0<\la+\mu<\Lambda,$ the problem \eqref{mainprob} admits two  non-trivial and non-negative solutions  
in $\mathscr N^+_{\la,\mu}$ and $\mathscr N^-_{\la,\mu},$ respectively.
Since $\mathscr N^+_{\la,\mu}\cap \mathscr N^-_{\la,\mu}=\emptyset,$ these solutions are distinct. Hence the proof is complete. $\hfill{\square}$
\section{ Appendix}
\begin{lemma}\label{convex}
Let $\{u_m\},\{v_m\}$ be any two bounded sequences in $X_0$ and
$c,\alpha, \beta$ be as in Theorem \ref{mainprob}. Then $$\lim_{m\ra \infty}\int_{\Omega}c(x)|u_m|^{\alpha(x)}|v_m|^{\beta(x)}dx= \int_{\Omega}c(x)|u|^{\alpha(x)}|v|^{\beta(x)}dx.$$ 
\end{lemma}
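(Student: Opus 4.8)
The plan is to combine the compact Sobolev-type embedding with a generalized (moving-majorant) dominated convergence theorem, the extra care being forced, as usual in this paper, by the variable exponents. Since the statement refers to a limit function $u$ (and $v$), I read the hypotheses as also including $u_m\rightharpoonup u$ and $v_m\rightharpoonup v$ in $X_0$; these weak limits exist up to subsequences by reflexivity of $X_0$ (Lemma \ref{lem 3.3}). Because $(A1)$ gives $1<\alpha^-+\beta^-\leq\alpha(x)+\beta(x)<p_s^*(x)$ for every $x\in\overline{\Omega}$, Theorem \ref{prp 3.3} provides the compact embedding $X_0\hookrightarrow L^{\alpha(x)+\beta(x)}(\Omega)$. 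Hence, passing to a subsequence (not relabelled), $u_m\to u$ and $v_m\to v$ strongly in $L^{\alpha(x)+\beta(x)}(\Omega)$, and, along a further subsequence, $u_m(x)\to u(x)$ and $v_m(x)\to v(x)$ for a.e. $x\in\Omega$.

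Next I would set up a dominating sequence. By Young's inequality, for a.e. $x\in\Omega$,
\[
c(x)|u_m(x)|^{\alpha(x)}|v_m(x)|^{\beta(x)}\leq \|c\|_{L^\infty(\Omega)}\Big(|u_m(x)|^{\alpha(x)+\beta(x)}+|v_m(x)|^{\alpha(x)+\beta(x)}\Big)=:G_m(x),
\]
and the same bound holds with $u,v$ in place of $u_m,v_m$, defining $G(x)$. The classical dominated convergence theorem does not apply directly, since the variable exponents and mere boundedness in $X_0$ do not furnish a single integrable majorant; instead I would invoke the generalized dominated convergence theorem (obtained from Fatou's lemma applied to $G_m\pm c|u_m|^{\alpha(\cdot)}|v_m|^{\beta(\cdot)}\geq 0$): if $G_m\to G$ a.e., $\int_\Omega G_m\,dx\to\int_\Omega G\,dx<\infty$, and the integrands converge a.e., then the integrals of the integrands converge. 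The a.e. convergence $G_m\to G$ and $c(x)|u_m|^{\alpha(x)}|v_m|^{\beta(x)}\to c(x)|u|^{\alpha(x)}|v|^{\beta(x)}$ is immediate from the a.e. convergence of $u_m,v_m$, the finiteness of $\alpha^+,\beta^+$, and the continuity of $t\mapsto|t|^{\alpha(x)}$, $t\mapsto|t|^{\beta(x)}$.

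It then remains to verify $\int_\Omega G_m\,dx\to\int_\Omega G\,dx$, that is, $\rho_{\alpha+\beta}(u_m)\to\rho_{\alpha+\beta}(u)$ and $\rho_{\alpha+\beta}(v_m)\to\rho_{\alpha+\beta}(v)$, where $\rho_{\alpha+\beta}$ is the modular on $L^{\alpha(x)+\beta(x)}(\Omega)$; this is precisely the content of Lemma \ref{prp2} (equivalence of norm convergence and modular convergence) applied to the strong $L^{\alpha(x)+\beta(x)}(\Omega)$ convergence from the first step. With all hypotheses in place, the generalized dominated convergence theorem yields $\int_\Omega c(x)|u_m|^{\alpha(x)}|v_m|^{\beta(x)}\,dx\to\int_\Omega c(x)|u|^{\alpha(x)}|v|^{\beta(x)}\,dx$ along the chosen subsequence, and since $u,v$ are the unique weak limits, a routine subsequence argument (every subsequence of the original sequences admits a further subsequence along which the convergence holds, always to the same limit) upgrades this to convergence of the full sequence. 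The only genuine obstacle, and the point where the variable-exponent structure intervenes, is the absence of a fixed integrable dominating function: one must produce the moving majorants $G_m$ and control their integrals through the modular--norm relation rather than through a crude pointwise bound.
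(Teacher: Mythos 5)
Your proof is correct, but it takes a genuinely different and noticeably shorter route than the paper's. The paper first proves a Brezis--Lieb type splitting
\[
\lim_{m\to\infty}\int_{\Omega} |u_m-u|^{\alpha(x)}|v_m-v|^{\beta(x)}\,dx
=\lim_{m\to\infty}\int_{\Omega}|u_m|^{\alpha(x)}|v_m|^{\beta(x)}\,dx-\int_{\Omega}|u|^{\alpha(x)}|v|^{\beta(x)}\,dx,
\]
obtained by integrating $\tfrac{d}{dt}$ along the segments $t\mapsto u_m-tu$ and $t\mapsto v_m-tv$ and passing to the limit through weak convergence of the derivative-integrands in $L^{\frac{\alpha(x)+\beta(x)}{\alpha(x)+\beta(x)-1}}(\Omega\times(0,1))$; it then kills the remainder with Young plus compact embedding, and finishes with a Fatou/Scheff\'e argument (the auxiliary sequence $w_m$) to upgrade convergence of the integrals to $L^1$-convergence so that the bounded weight $c$ can be reinstated. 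You instead apply Pratt's generalized dominated convergence theorem directly to $f_m=c|u_m|^{\alpha(\cdot)}|v_m|^{\beta(\cdot)}$ with the moving majorant $G_m=\|c\|_{L^\infty}\big(|u_m|^{\alpha(\cdot)+\beta(\cdot)}+|v_m|^{\alpha(\cdot)+\beta(\cdot)}\big)$, verifying $\int_\Omega G_m\,dx\to\int_\Omega G\,dx$ via the modular--norm equivalence (Lemma \ref{prp2}) and the compact embedding (Theorem \ref{prp 3.3}). This buys a much more economical argument: the weight $c$ is handled at once, the Brezis--Lieb path integral and the weak convergences in the auxiliary Lebesgue spaces are avoided entirely, and the only input beyond compactness is the modular--norm relation which the paper already has in hand. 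What you lose relative to the paper is the intermediate Brezis--Lieb decomposition, which is a reusable tool in its own right (and, incidentally, the spot where you might notice a sign slip in the paper's equation \eqref{rel}: by the fundamental theorem of calculus applied first in $u$, then in $v$, the two double-integral terms should be \emph{added}, not subtracted; this does not affect the subsequent limit passage). Your closing subsequence argument is fine provided, as you state, the lemma is read as fixing the weak limits $u,v$; that reading matches how the lemma is invoked throughout Section \ref{manifold}.
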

\begin{proof}
Since $\{u_m\},\{v_m\}$ are  bounded sequences in $X_0$ and $X_0$ is reflexive, up to sub-sequences $u_m\rightharpoonup u$ and $v_m\rightharpoonup v$ weakly in $X_0$ as $m\ra\infty.$
First we claim that 
\begin{align}\label{claim}
\lim_{m\ra\infty}\int_{\Omega} |u_m-u|^{\alpha(x)}|v_m-v|^{\beta(x)}dx=\lim_{m\ra \infty}\int_{\Omega}|u_m|^{\alpha(x)}|v_m|^{\beta(x)}dx-\int_{\Omega}|u|^{\alpha(x)}|v|^{\beta(x)}dx
\end{align}	 
For $t\in(0,1)$, we note that
\begin{align}\label{rel}
&\int_{\Omega}\int_{0}^{1}\alpha(x)|u_m-tu|^{\alpha(x)-2}(u_m-tu)u|v_m|^{\beta(x)}dxdt\nonumber\\&-\int_{\Omega}\int_{0}^{1}\beta(x)|u_m-u|^{\alpha(x)}|v_m-tv|^{\beta(x)-2}v(v_m-tv)dxdt\nonumber\\&
=\int_{\Omega}|u_m|^{\alpha(x)}|v_m|^{\beta(x)}dx-\int_{\Omega}|u_m-u|^{\alpha(x)}|v_m-v|^{\beta(x)}dx.
\end{align}
Denote{\small $$f_m(x,t):=|u_m-tu|^{\alpha(x)-2}(u_m-tu)|v_m|^{\beta(x)}\text{~and~} g_m(x,t):=|u_m-u|^{\alpha(x)}|v_m-tv|^{\beta(x)-2}(v_m-tv).$$}  Now from the given assumptions, we have
\begin{align}\label{a.e.}
\;\;\;	\left.\begin{array}{rl}
&f_m(x,t)\ra(1-t)^{\alpha(x)-1}|u|^{\alpha(x)-2}u|v|^{\beta(x)}\text{~a.e. ~in~ $\RR^N\times(0,1)$~as~} m\ra\infty, \\
&g_m(x,t)\ra 0 \text{~a.e. ~in~ $\RR^N\times(0,1)$~as~} m\ra\infty .
\end{array}
\right\}
\end{align}
Next, using H\"older's inequality and Sobolev-type embedding result (Theorem \ref{prp 3.3}), we obtain
\begin{align}\label{reln1}
&\int_{\Omega}\int_{0}^{1}|f_m|^{\frac{\alpha(x)+\beta(x)}{\alpha(x)+\beta(x)-1}}dxdt\nonumber\\
&\leq\||u_m-tu|^{\{(\alpha-1)(\frac{\alpha+\beta}{\alpha+\beta-1})\}(\cdot)}\|_{L^{\frac{\alpha(x)+\beta(x)-1}{\alpha(x)-1}}(\Omega\times(0,1))}
\||v_m|^{\beta(\cdot)}\|_{L^{\frac{\alpha(x)+\beta(x)-1}{\beta(x)}}(\Omega\times(0,1))}<M_1,\end{align} and
\begin{align}\label{reln2}
&\int_{\Omega}\int_{0}^{1}|g_m|^{\frac{\alpha(x)+\beta(x)}{\alpha(x)+\beta(x)-1}}dxdt\nonumber\\ &\leq\||u_m-u|^{\{(\alpha)(\frac{\alpha+\beta}{\alpha+\beta-1})\}(\cdot)}\|_{L^{\frac{\alpha(x)+\beta(x)-1}{\alpha(x)}}(\Omega\times(0,1))}\||v_m|^{(\beta-1)\frac{\alpha+\beta}{\alpha+\beta-1}(\cdot)}\|_{L^{\frac{\alpha(x)+\beta(x)-1}{\beta(x)-1}}(\Omega\times(0,1))}<M_2,
\end{align}
where $M_1, M_2$ are two positive constant independent of $m.$
Hence 
the sequences $\{f_m\}$, $\{g_m\}$ are uniformly bounded in  $L^{\frac{\alpha(x)+\beta(x)}{\alpha(x)+\beta(x)-1}}(\Omega\times(0,1))$  and thus
we have, up to sub-sequences
{\small\begin{align}\label{weakconv}
	\;\;\;	\left.\begin{array}{rl}
	&f_m\rightharpoonup(1-t)^{\alpha(x)-1}|u|^{\alpha(x)-2}u|v|^{\beta(x)}\text{~weakly~in~ $L^{\frac{{\alpha(x)+\beta(x)}}{\alpha(x)+\beta(x)-1}}(\Omega\times(0,1))$~as~} m\ra\infty, \\
	&g_m\rightharpoonup 0 \text{~weakly~in~ $L^{\frac{{\alpha(x)+\beta(x)}}{\alpha(x)+\beta(x)-1}}(\Omega\times(0,1))$~as~} m\ra\infty .
	\end{array}
	\right\}
	\end{align}}
Using \eqref{weakconv}, we deduce
\begin{align}\label{lim1}
\lim_{m\ra\infty}\int_{\Omega}\int_{0}^{1}\alpha(x)f_m u~ dx dt=\lim_{m\ra\infty}\int_{\Omega}\int_{0}^{1}\alpha(x) f u~ dxdt
=\lim_{m\ra\infty}\int_{\Omega}|u|^{\alpha(x)}|v|^{\beta(x)}dx,
\end{align}
and
\begin{align}\label{lim2}
\lim_{m\ra\infty}\int_{\Omega}\int_{0}^{1}\beta(x) g_m v~ dxdt=0.
\end{align}
Thus plugging \eqref{lim1} and \eqref{lim2} into \eqref{rel} we obtain \eqref{claim}. Note that from Theorem \ref{prp 3.3} and Lemma \ref{prp2},
we have $$\int_{\Omega} |u_m-u|^{\alpha(x)+\beta(x)}dx\ra 0 \text{~and~} \int_{\Omega}|v_m-v|^{\alpha(x)+\beta(x)}dx\ra 0 \text{~as~}m\ra\infty.$$
Now using the above and Young's inequality, we have
\begin{align}\label{lim}
&\int_{\Omega} |u_m-u|^{\alpha(x)}|v_m-v|^{\beta(x)}dx\nonumber\\
&\leq\int_{\Omega}\bigg\{\frac{\alpha(x)}{\alpha(x)+\beta(x)} |u_m-u|^{\alpha(x)+\beta(x)}+\frac{\alpha(x)}{\alpha(x)+\beta(x)} |v_m-v|^{\alpha(x)+\beta(x)}\bigg\}dx\nonumber\\
&\leq\frac{\alpha^+}{\alpha^-+\beta^-}\int_{\Omega} |u_m-u|^{\alpha(x)+\beta(x)}dx+\frac{\beta^+}{\alpha^-+\beta^-}\int_{\Omega} |v_m-v|^{\alpha(x)+\beta(x)}dx\nonumber\\
&\ra 0 \text{~ as~} m\ra\infty.
\end{align}
Thus inserting \eqref{lim} into \eqref{claim}, we obtain
\begin{align}\label{basic}
\lim_{m\ra\infty}\int_{\Omega}|u_m|^{\alpha(x)}|v_m|^{\beta(x)}dx= \int_{\Omega}|u|^{\alpha(x)}|v|^{\beta(x)}dx.
\end{align}
Now
\begin{align}\label{int}
&\left|\int_{\Omega}c(x)|u_m|^{\alpha(x)}|v_m|^{\beta(x)}dx- \int_{\Omega}c(x)|u|^{\alpha(x)}|v|^{\beta(x)}dx\right|
\nonumber\\&\leq\|c\|_{L^\infty(\Omega)}\int_{\Omega}\left||u_m|^{\alpha(x)}|v_m|^{\beta(x)}-|u|^{\alpha(x)}|v|^{\beta(x)}\right|dx.
\end{align}
Define $$w_m:=|u_m|^{\alpha(x)}|v_m|^{\beta(x)}+|u|^{\alpha(x)}|v|^{\beta(x)}-\left| |u_m|^{\alpha(x)}|v_m|^{\beta(x)}-|u|^{\alpha(x)}|v|^{\beta(x)}\right|\geq0.$$
Since $u_m(x)\ra u(x)$ and $v_m(x)\ra v(x)$ a.e. in $\RR^N$ as $m\ra\infty,$ we have $$w_m(x)\ra 2 |u(x)|^{\alpha(x)}|v(x)|^{\beta(x)} \text{~a.e.~in~$\RR^N$~as~$m\ra\infty$}.$$ Thus by Fatou's Lemma 
\begin{align}\label{fatou}
\DD\liminf_{m\ra\infty}\int_{\Omega}w_m(x)dx\geq 2 \int_{\Omega}|u|^{\alpha(x)}|v|^{\beta(x)}dx.
\end{align}
Again from \eqref{basic},
\begin{align}\label{final}
\DD\limsup_{m\ra\infty}\int_{\Omega}w_m(x)dx&\leq\lim_{m\ra \infty}\int_{\Omega}|u_m|^{\alpha(x)}|v_m|^{\beta(x)}dx
+\lim_{m\ra \infty}\int_{\Omega}|u|^{\alpha(x)}|v|^{\beta(x)}dx\nonumber\\&-\DD\limsup_{m\ra\infty}
\int_{\Omega}\left||u_m|^{\alpha(x)}|v_m|^{\beta(x)}dx-|u|^{\alpha(x)}|v|^{\beta(x)}\right|dx\nonumber\\
&=2\int_{\Omega}|u|^{\alpha(x)}|v|^{\beta(x)}dx-\DD\limsup_{m\ra\infty}
\int_{\Omega}\left||u_m|^{\alpha(x)}|v_m|^{\beta(x)}-|u|^{\alpha(x)}|v|^{\beta(x)}\right|dx
\end{align}
Combining \eqref{fatou} and \eqref{final}, we have $\DD\limsup_{m\ra\infty}\int_{\Omega}\left||u_m|^{\alpha(x)}|v_m|^{\beta(x)}-|u|^{\alpha(x)}|v|^{\beta(x)}\right|dx\leq0,$ that is $$ \lim_{m\ra \infty}\int_{\Omega}\left||u_m|^{\alpha(x)}|v_m|^{\beta(x)}-|u|^{\alpha(x)}|v|^{\beta(x)}\right|dx=0.$$ Thus combining the above together with \eqref{int}, we get our final result. 
\end{proof}
The next lemma follows similarly as Lemma \ref{convex} using the fact $\alpha,\beta\in C_+(\overline{\Omega}).$
\begin{lemma}\label{cor}
Let $\{u_m\},\{v_m\}$ be any two bounded sequences in $X_0$ and
$c,\alpha, \beta$ be as in Theorem \ref{mainprob}. Then $$\lim_{m\ra \infty}\int_{\Omega}\frac{1}{\alpha(x)+\beta(x)}c(x)|u_m|^{\alpha(x)}|v_m|^{\beta(x)}dx= \int_{\Omega}\frac{1}{\alpha(x)+\beta(x)}c(x)|u|^{\alpha(x)}|v|^{\beta(x)}dx.$$ 
\end{lemma}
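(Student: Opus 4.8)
The plan is to deduce Lemma~\ref{cor} directly from Lemma~\ref{convex} by absorbing the coefficient $\frac{1}{\alpha(x)+\beta(x)}$ into the weight function. First I would introduce the function $\tilde c(x):=\frac{c(x)}{\alpha(x)+\beta(x)}$ for $x\in\overline{\Omega}$. Since $\alpha,\beta\in C_+(\overline{\Omega})$, we have $\alpha(x)+\beta(x)\ge\alpha^-+\beta^->2$ for every $x\in\overline{\Omega}$, so $x\mapsto\frac{1}{\alpha(x)+\beta(x)}$ is continuous on $\overline{\Omega}$ and bounded above by $\frac{1}{\alpha^-+\beta^-}$. Hence $\tilde c$ is a non-negative function belonging to $C(\overline{\Omega},[0,\infty))\cap L^\infty(\Omega)$ with $\|\tilde c\|_{L^\infty(\Omega)}\le\frac{1}{\alpha^-+\beta^-}\|c\|_{L^\infty(\Omega)}$; in particular $\tilde c$ satisfies exactly the same hypotheses as $c$ in Theorem~\ref{mainthm}, so Lemma~\ref{convex} applies verbatim with $\tilde c$ replacing $c$.

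Then, given any two bounded sequences $\{u_m\},\{v_m\}$ in $X_0$ (which, after passing to subsequences, converge weakly in $X_0$ to some $u,v$), Lemma~\ref{convex} applied to $\tilde c$ yields
\[
\lim_{m\ra\infty}\int_{\Omega}\tilde c(x)|u_m|^{\alpha(x)}|v_m|^{\beta(x)}\,dx=\int_{\Omega}\tilde c(x)|u|^{\alpha(x)}|v|^{\beta(x)}\,dx,
\]
which, unwinding the definition of $\tilde c$, is precisely the claimed convergence. This completes the argument.

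The essential analytic work — the Brezis--Lieb-type splitting \eqref{claim}, the uniform bounds \eqref{reln1}--\eqref{reln2} in the variable-exponent space $L^{\frac{\alpha(x)+\beta(x)}{\alpha(x)+\beta(x)-1}}(\Omega\times(0,1))$, the weak convergences \eqref{weakconv}, and the Fatou / reverse-Fatou comparison leading to \eqref{basic} — is already carried out in the proof of Lemma~\ref{convex}, so I do not expect any genuine obstacle. The only point requiring attention is that dividing $c$ by $\alpha(x)+\beta(x)$ preserves membership in $L^\infty(\Omega)$, and this is exactly where the hypothesis $\alpha,\beta\in C_+(\overline{\Omega})$ (which forces $\inf_{\overline{\Omega}}(\alpha+\beta)>0$) is used. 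If one prefers not to repackage the weight, one can instead rerun the proof of Lemma~\ref{convex} line by line while carrying the extra bounded factor $\frac{1}{\alpha(x)+\beta(x)}$ next to $c(x)$ throughout; in the final estimate corresponding to \eqref{int} one then extracts $\big\|\frac{c}{\alpha+\beta}\big\|_{L^\infty(\Omega)}$ in place of $\|c\|_{L^\infty(\Omega)}$, and the same Fatou argument closes the proof.
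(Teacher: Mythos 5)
Your proposal is correct and is essentially the paper's own argument: the paper simply remarks that Lemma \ref{cor} ``follows similarly as Lemma \ref{convex} using the fact $\alpha,\beta\in C_+(\overline{\Omega})$,'' which is precisely the observation that the extra factor $\frac{1}{\alpha(x)+\beta(x)}$ is a bounded continuous weight and can be absorbed into $c$ (your $\tilde c$) or carried through the estimates unchanged.
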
 \hfill{$\square$}


\end{document}